\documentclass[12pt, reqno]{amsart}
\usepackage{amsmath,amssymb,amsthm,amsxtra, setspace}
\usepackage{amssymb}
\usepackage{stmaryrd}
\usepackage{alltt}
\usepackage{graphicx,type1cm,xcolor}
\usepackage{mathtools, accents}
\usepackage{mathrsfs}
\usepackage{hyperref}
\usepackage{alltt}
\usepackage{fouridx}
\usepackage{dsfont}
\usepackage{dsfont}
\usepackage{cancel}
\usepackage[mathcal]{euscript}
\usepackage[margin=1in]{geometry}
\usepackage{shadethm}
\usepackage{float}

\allowdisplaybreaks

\newcommand{\norm}[1]{\left\|#1\right\|}

\newcommand{\R}{\mathbb{R}}

\numberwithin{equation}{section}

\newshadetheorem{Theorem}{Theorem}
\newshadetheorem{Proposition}{Proposition}
\newshadetheorem{Lemma}{Lemma}
\newshadetheorem{Assumption}{Assumption}
\newshadetheorem{Corollary}{Corollary}
\newshadetheorem{Definition}{Definition}
\newshadetheorem{Example}{Example}
\newshadetheorem{Remark}{Remark}
\newshadetheorem{Note}{Note}
\newshadetheorem{Hypothesis}{Hypothesis}
\newshadetheorem{Explanation}{Explanation}
\newtheorem{theorem}{Theorem}[section]
\newtheorem{lemma}[theorem]{Lemma}
\newtheorem{proposition}[theorem]{Proposition}

\newtheorem{definition}[theorem]{Definition}

\newtheorem{remark}[theorem]{Remark}

\def\A{\mathcal{A}}
\def\bM{\mathcal{M}}
\def\N{\widetilde{\mathcal{N}}}
\def\H{\mathcal{H}}
\def\g{\gamma}
\def\umm{\left(\vartheta_m\right)_{tt}}
\def\um{\left(\vartheta_m\right)_t}
\def\E{E_\infty}

\def\l{\lambda_\infty}

\let\originalleft\left
\let\originalright\right
\renewcommand{\left}{\mathopen{}\mathclose\bgroup\originalleft}
\renewcommand{\right}{\aftergroup\egroup\originalright}

\newcommand{\ip}[2]{\fourIdx{}{0}{}{\!x}{\mathcal{ X}}}

\newcommand\dela[1]{}
\hypersetup{colorlinks=true,%
	citecolor=red,%
	filecolor=blue,%
	linkcolor=blue,%
}
\usepackage{graphicx}
\usepackage[utf8]{inputenc}
\usepackage[T1]{fontenc}
\mathtoolsset{showonlyrefs}
\usepackage{nomencl}
\makenomenclature
\usepackage{amssymb}

\usepackage[symbol]{footmisc}
\usepackage{centernot}
\usepackage{ragged2e}
\justifying
\usepackage{orcidlink}

\usepackage{todonotes}

\usepackage{xpatch}
\makeatletter   
\xpatchcmd{\@tocline}
{\hfil\hbox to\@pnumwidth{\@tocpagenum{#7}}\par}
{\ifnum#1<0\hfill\else\dotfill\fi\hbox to\@pnumwidth{\@tocpagenum{#7}}\par}
{}{}
\makeatother    
\makeatletter
\def\l@subsection{\@tocline{2}{0pt}{4pc}{6pc}{}}
\def\l@subsubsection{\@tocline{3}{0pt}{8pc}{8pc}{}}
\makeatother

\makeatletter
\def\l@section{\@tocline{1}{12pt}{0pt}{}{\bfseries}}% <- added
\makeatother
\usepackage{enumitem}

\title[Constrained Damped Nonlinear Wave Equation]{Global  Well-posedness and Asymptotic Analysis of a Damped Nonlinear Wave Equation with a Codimension-One Constraint}

\begin{document}
	\maketitle
	\begin{center}
		\author{Harsh Tiwari\footnote[4]{Department of Mathematics, Indian Institute of Technology Roorkee-IIT Roorkee, Haridwar Highway, Roorkee, Uttarakhand 247667, India.\\
				\textit{e-mail:} H. Tiwari: \email{harsh\_t@ma.iitr.ac.in; harshtiwari98765@gmail.com}.\\
				\textit{e-mail:} Manil T. Mohan: \email{maniltmohan@ma.iitr.ac.in; maniltmohan@gmail.com}.\\
				$\hspace{2mm} ^\ast$Corresponding author.\\
				\textit{MSC 2020}:  35B40,  	%Asymptotic behavior of solutions to PDEs
				35L15  	%Initial value problems for second-order hyperbolic equations
				35L70,  	%Second-order nonlinear hyperbolic equations
				35R01,  	%PDEs on manifolds
				58J45,  	%Hyperbolic equations on manifolds
				\\
				\textit{Key words}: Damped wave equation $\cdot$  Feado-Galerkin approximation $\cdot$ Constrained PDEs $\cdot$ Strong solutions $\cdot$ Asymtotic precompactness $\cdot$  Asymtotic analysis
%				\textit{Key words}: Stochastic heat equation $\cdot$ Martingale solutions $\cdot$ Multiplicative noise $\cdot$ Constraints $\cdot$ Stochastic gradient flow $\cdot$ Pathwise uniqueness $\cdot$ It\^o formula
			}\orcidlink{0009-0004-4685-8878} and Manil T. Mohan$^\ast$\footnotemark[4]\orcidlink{0000-0003-3197-1136}}
	\end{center}

	\begin{abstract}
		%We establish the global well-posedness of strong solutions to a damped nonlinear wave equation under a codimension-one constraint.
	We prove the global existence and uniqueness of strong solutions to a constrained version of the damped nonlinear wave equation
$$ \vartheta_{tt}+\gamma \vartheta_t-\Delta\vartheta+|\vartheta|^{p-2}\vartheta=0 $$
	on a smooth bounded domain $\varOmega\subset\mathbb{R}^d$, where the	evolution is projected onto the tangent space of the Hilbert manifold
$$\bM	=	\left\{	\vartheta\in L^2(\varOmega):\|\vartheta\|_{L^2(\varOmega)}=1	\right\}, $$
	which is the unit sphere in $L^2(\varOmega)$. We assume that 
	$$p\in[2,\infty)\ \text{ for }\ d=1,2, \ \text{ while }\  2\leq p\leq \frac{2(d-1)}{d-2} \ \text{	for }\ d\geq3.$$  By employing the Faedo--Galerkin approximation method,	together with suitable a priori estimates and compactness arguments, we establish the global well-posedness of the problem. In particular, we show that the Hilbert manifold $\bM$ is invariant under the flow, and hence the $L^2$-constraint is preserved throughout the evolution. Using the \emph{Lusternik--Schnirelmann theory}, we show that the corresponding stationary problem possesses at least countably many stationary solutions. We further investigate the long-time behaviour of solutions and prove that, along a subsequence, every solution of the constrained problem converges to a stationary solution by invoking \emph{Webb's theorem} and \emph{Barbalat's lemma}. When the initial data are sufficiently close to the first eigenfunction of the associated stationary problem, we show that the unique strong solution converges in $H_0^1(\varOmega)$ to the unique positive ground-state solution.
	%For the strongly damped case, in which the damping term $\gamma \vartheta_t$	is replaced by $-\gamma\Delta \vartheta_t$, we obtain analogous global 	well-posedness results for $p\in[2,\infty)$ when $d=1,2$, and	$	2\leq p\leq \frac{2d}{d-2} $	when $d\geq3$. We also study the corresponding long-time behaviour	and establish asymptotic convergence results in this setting.
	\end{abstract}

	\begin{center}
		\tableofcontents
	\end{center}
	\section{Introduction}
	Let $(\mathcal{H},\|\cdot\|_{\mathcal{H}})$ be a Hilbert space equipped with the inner product $(\cdot,\cdot)_{\mathcal{H}}$, and let	$\mathcal{M}:=\{\xi\in\mathcal{H}:\|\xi\|_{\mathcal{H}}=1\}$
	denote the unit sphere in $\mathcal{H}$. For each $\xi\in\mathcal{M}$, the tangent space to $\mathcal{M}$ at $\xi$ is given by $T_{\xi}\mathcal{M}:=\{\eta\in\mathcal{H}:(\eta,\xi)_{\mathcal{H}}=0\}.$
	Let $\mathcal{F}:D(\mathcal{F})\subset\mathcal{H}\times\mathcal{H}\to\mathbb{R}$ be a possibly densely defined scalar field. Given an initial state $\vartheta_0\in\mathcal{M}$ and an initial velocity $\vartheta_1\in T_{\vartheta_0}\mathcal{M}$, consider the following abstract  initial value problem: 
	\begin{equation}\label{eq:wave}
		\begin{cases}
			\vartheta_{tt}(t)=\mathcal{F}(\vartheta (t),\vartheta_t(t)),\quad t>0,\\
			\vartheta(0)=\vartheta_0,\ \vartheta_t(0)=\vartheta_1.
		\end{cases}
	\end{equation}
	We assume that this initial value problem is globally well-posed; that is, for every admissible pair of initial data $(\vartheta_0,\vartheta_1)$, there exists a unique solution $\vartheta (t)$ defined for all $t\geq 0$. 	In general, the flow generated by \eqref{eq:wave} does not preserve the 	constraint manifold $\bM$, even if $\vartheta_0\in\bM$ and	$(\vartheta_0,\vartheta_1)_\H=0$. Indeed, differentiating the identity 	$\|\vartheta (t)\|_\H^2=1$ twice yields the necessary condition $(\vartheta_{tt},\vartheta)_\H=-\|\vartheta_t\|_\H^2.$	Hence, contrary to the first-order case, the tangency condition	$(\mathcal{F}(\vartheta,\vartheta_t),\vartheta)_\H=0,$ is not sufficient to guarantee the invariance of $\bM$.	Therefore, we now introduce the modified scalar field 	$\widehat{\mathcal{F}}:D(\mathcal{F})\subset\H\times\H\to\R,$ defined by
	\begin{equation}\label{mod of func}
		\widehat{\mathcal{F}}(\vartheta,\varrho)=\mathcal{F}(\vartheta,\varrho)-\bigl((\mathcal{F}(\vartheta,\varrho),\vartheta)_\H+\|\varrho\|_\H^2\bigr)\vartheta,\ \text{ for }\ (\vartheta,\varrho)\in (D(F)\cap\bM)\times\H.
	\end{equation}
	Clearly, $(\widehat{\mathcal{F}}(\vartheta,\varrho),\vartheta)_\H=-\|\varrho\|_\H^2$. Consequently, $$\frac{d^2}{dt^2}\|\vartheta (t)\|_\H^2
	=2(\vartheta_{tt},\vartheta)_\H+2\|\vartheta_t\|_\H^2=0.$$
	Therefore, if $
	\|\vartheta_0\|_\H=1\ \text{ and }\
	(\vartheta_0,\vartheta_1)_\H=0,$
	then $
	\|\vartheta (t)\|_\H=1,$
	for all $t$ in the interval of existence. In other words, the manifold
	$\bM$ is positively invariant under the flow generated by the constrained
	wave equation
	\begin{equation}\label{eq:modified-wave}
		\begin{cases}
			\vartheta_{tt}(t)=\widehat{\mathcal{F}}(\vartheta (t),\vartheta_t(t)),\quad t>0,\\
			\vartheta(0)=\vartheta_0,\  \vartheta_t(0)=\vartheta_1.
		\end{cases}
	\end{equation}
	\subsection{Motivation and formulation of the model}
	The physical motivation for the model under consideration is closely
	related to the non-relativistic limit of the Klein--Gordon equation.
	Several mathematical and physical studies have established that, as the
	speed of light $c$ becomes large, solutions of the relativistic
	Klein--Gordon equation converge, in a suitable sense, to solutions of
	the corresponding Schr\"odinger equation (\cite{RJC-81,SM+KN+T0-02,AZ-10}). A fundamental property of the
	Schr\"odinger dynamics is the conservation of the $L^2$-norm of the
	solution. This observation motivates the introduction of a modified
	Klein--Gordon equation whose dynamics likewise preserve the $L^2$-norm.
	The purpose of this modification is to construct a relativistic model
	that is more closely aligned with the limiting Schr\"odinger dynamics
	and may therefore provide a more accurate approximation over
	intermediate time scales.

%	A natural motivation for our model comes from the nonrelativistic limit of the Klein--Gordon equation. It is well known that, as the speed of light $c \to \infty$, the corresponding dynamics approaches that of the Schr\"odinger equation \cite{RJC-81,SM+KN+T0-02,AZ-10}. Since the $L^2$-norm is conserved for Schr\"odinger solutions, we are naturally led to impose $L^2$-norm conservation as a key constraint in our model. This conservation property plays a central role in the formulation and analysis of the model.

	Let $\varOmega\subset\mathbb{R}^d$, $d\in\mathbb{N}$, be a bounded domain with boundary of class $C^2$. We set  $\mathcal{H}=L^2(\varOmega)$ and let $-\Delta$ denote the Dirichlet Laplacian with
	$D(-\Delta)=H^2(\varOmega)\cap H_0^1(\varOmega).$
	We first present three examples that motivate the constrained problem considered below.
	\begin{itemize}
		\item If $\mathcal{F}_1(\vartheta,\vartheta_t)=\Delta\vartheta$, then
		\begin{equation}
			\widehat{\mathcal{F}}_1(\vartheta,\vartheta_t)
			=\Delta\vartheta-\left((\Delta\vartheta,\vartheta)_{L^2(\varOmega)}+\|\vartheta_t\|_{L^2(\varOmega)}^2\right)\vartheta
			=\Delta\vartheta-\left(\|\vartheta_t\| _{L^2(\varOmega)}^2-\|\nabla\vartheta\|_{L^2(\varOmega)}^2\right)\vartheta.
		\end{equation}
		\item For $\mathcal{F}_2(\vartheta,\vartheta_t)=-|\vartheta|^{p-2}\vartheta$, $p\geq2$, we have
		\begin{equation}
		\widehat{\mathcal{F}}_2(\vartheta,\vartheta_t)
			=-|\vartheta|^{p-2}\vartheta
			+\left(\|\vartheta\|_{L^p(\varOmega)}^p-\|\vartheta_t\|_{L^2(\varOmega)}^2\right)\vartheta.
		\end{equation}
		\item Finally, for $\mathcal{F}_3(\vartheta,\vartheta_t)=-\g \vartheta_t$, $\g\geq0$, the constraint
		$(\vartheta,\vartheta_t)_{L^2(\varOmega)}=0$ yields
		\begin{equation}
			\widehat{\mathcal{F}}_3(\vartheta,\vartheta_t)
			=-\g \vartheta_t-\|\vartheta_t\|_{L^2(\varOmega)}^2\vartheta.
		\end{equation}
	\end{itemize}
	Combining the above three contributions, we obtain the following $\widehat{F}$, which will be used throughout the paper:
	\begin{equation}\label{eq:proj}
		\widehat{\mathcal{F}}(\vartheta,\vartheta_t)
		=\Delta\vartheta-|\vartheta|^{p-2}\vartheta-\g \vartheta_t
		+\left(\|\nabla \vartheta\|_{L^2(\varOmega)}^2+\|\vartheta\|_{L^p(\varOmega)}^p-\|\vartheta_t\|_{L^2(\varOmega)}^2\right)\vartheta.
	\end{equation}
	\subsubsection{The Model}
	We consider the constraint manifold
	\begin{equation}
		\mathcal{M}
		=\left\{\vartheta\in L^2(\varOmega):\|\vartheta\|_{L^2(\varOmega)}=1\right\},
	\end{equation}
	and study the following constrained nonlinear damped wave equation:
	\begin{equation}\label{eq:mains}
		\left\{
		\begin{aligned}
			\vartheta_{tt}+\g \vartheta_t-\Delta\vartheta+|\vartheta|^{p-2}\vartheta
			&=\left(\|\nabla\vartheta\|_{L^2(\varOmega)}^2+\|\vartheta\|_{L^p(\varOmega)}^p-\|\vartheta_t\|_{L^2(\varOmega)}^2\right)\vartheta
			&&\text{ in }\ \varOmega\times(0,T),\\
			\vartheta&=0
			&&\text{ on }\ \partial\varOmega\times(0,T),\\
			\vartheta(0)&=\vartheta_0,\  \vartheta_t(0)=\vartheta_1
			&&\text{ in }\ \varOmega.
		\end{aligned}
		\right.
	\end{equation}
	The initial data are assumed to satisfy
	\begin{equation}\label{MID}
		\vartheta_0\in\mathcal{M},\  (\vartheta_0,\vartheta_1)_{L^2(\varOmega)}=0.
	\end{equation}
	For the well-posedness theory,  we assume
	\begin{equation}\label{eqn-restriction}
		p\in
		\begin{cases}
			[2,\infty), & d=1,2,\\
			\left[2,\dfrac{2(d-1)}{d-2}\right], & d\geq3,
		\end{cases}
	\end{equation}
	and we allow $\g\geq0$. In particular, the existence and uniqueness results remain valid in the absence of damping. For the asymptotic analysis, we impose the additional assumption $\g>0$, under which the damping provides the dissipation needed to study the long-time behavior of solutions.
	\subsection{Literature survey}

The well-posedness theory for nonlinear wave equations has been extensively developed. Lions \cite{Lions1969} established existence and uniqueness results for wave equations with polynomial nonlinearities on bounded domains using the Faedo--Galerkin method. Subsequently, the well-posedness theory for damped wave equations with external forcing was developed in several works; see, for example, \cite{LA+GP-69,HM-72,AH-81,AH-87,JL+WS-65}. The asymptotic behavior of damped wave equations has also received considerable attention. Narazaki \cite{T.N-04} employed $L^p$--$L^q$ estimates to derive decay estimates for semilinear damped wave equations, while Haraux and Zuazua \cite{AH+EZ-87} established related decay results for damped semilinear hyperbolic equations. Chill and Haraux \cite{RC+AH-04} obtained optimal decay estimates for wave equations in exterior domains and quantified the difference between solutions of abstract dissipative wave and heat equations. For exterior domains, Sobajima \cite{Sobajima} investigated the diffusion phenomenon for wave equations with effective space-dependent damping by combining energy estimates with properties of Kummer's confluent hypergeometric functions. More recently, nonlinear wave equations involving the $p$-Laplacian and logarithmic-type nonlinearities have been studied in \cite{Yang,HY+HY-22}, where global existence, energy decay, and sufficient conditions for finite-time blow-up were established. Asymptotic profiles for various classes of damped wave equations have also been investigated; see \cite{RI+AS-16,H.M-21,TN+KN-08,Webb80} and the references therein.

A complementary approach to investigating the long-time behavior relies on the \L ojasiewicz–Simon inequality \cite{SL-63} which has become an important tool in the asymptotic analysis of gradient-like evolution equations. Simon \cite{LS-83} extended the classical \L ojasiewicz inequality to suitable analytic energy functionals defined on infinite-dimensional Hilbert spaces by employing a Lyapunov–Schmidt reduction. The inequality was subsequently extended to broader classes of functions and, in particular, to analytic functionals on Banach spaces and on submanifold of a Banach space; see \cite{PMNF-20,KK-98,AH-81,FR-20}. These developments provide a useful framework for establishing the convergence of bounded trajectories to stationary states and, under suitable assumptions, for obtaining convergence rates. Haraux and Jendoubi \cite{AH+MAD-99, MAJ-98} investigated the asymptotic behavior of global bounded solutions to second-order evolution problems with analytic nonlinearities, employing the Łojasiewicz--Simon inequality together with Webb's theorem. In particular, in \cite{AH+MAD-98}, established the convergence of global bounded solutions of second-order gradient-like systems with analytic nonlinearities to equilibrium points by means of the Łojasiewicz--Simon gradient inequality. Thus, the \L ojasiewicz--Simon framework is particularly relevant for gradient-flow structures, including evolution equations subject to norm-preserving or other geometric constraints.

% In the context of constrained evolution equations, Rybka \cite{PR-06} employed a \L ojasiewicz-type inequality to investigate the asymptotic behavior of heat flows on manifolds. More recently, Bawalia et al. \cite{AB+ZB+MTM+PR} applied a \L{}ojasiewicz--Simon gradient inequality on a Hilbert submanifold to study a nonlinear heat equation with finite-codimensional constraints. 
%In particular, for bounded domains, even polynomial nonlinearities, and $1\leq d\leq3$, they showed that the unique global strong solution converges to a stationary state in $W^{2,q}(\varOmega)\cap W^{1,q}_0(\varOmega)$, with some restriction on $p$ and $q$.
% $2\leq q<\frac{2d}{d+4-4\beta},
%\ 1<\beta<\frac32.$

A complementary direction concerns evolution equations subject to norm-preserving constraints. In the parabolic setting, Rybka \cite{PR-06} studied heat flows on manifolds and their asymptotic behavior, while Caffarelli and Lin \cite{LC+FL-09} investigated nonlocal heat flows that preserve the $L^2$-energy, establishing well-posedness and studying their convergence behavior in the context of singularly perturbed systems. Ma and Cheng \cite{LM+LC-13} further developed the theory of $L^2$-norm-preserving flows on manifolds, addressing global existence, stability, asymptotic behavior, and gradient estimates. Brze\'zniak and Hussain \cite{ZB+JH-24} investigated constrained nonlinear heat equations and established global well-posedness, invariance of the sphere manifold, and a gradient-flow structure. 
%Hussain \cite{JH-23} studied a deterministic nonlinear gradient-type heat equation with polynomial nonlinearities and established global existence and uniqueness by means of the Faedo--Galerkin compactness method.

The study of nonlocal norm-preserving terms has subsequently been extended to a broader class of nonlinear parabolic equations. Antonelli et al. \cite{PA+PC+BS-24} investigated a nonlinear parabolic equation preserving the $L^2$-norm on both bounded domains as well as in the whole space, establishing local and global well-posedness together with detailed asymptotic results. In particular, they proved convergence toward stationary states and, for positive initial data on a ball, strong convergence to the ground state. Shakarov \cite{BS-25} subsequently considered a nonlinear parabolic equation with a nonlocal $L^2$-norm-preserving term and analyzed its well-posedness and convergence toward stationary states. Bawalia et al. \cite{AB+ZB+MTM} studied constrained nonlinear heat equations with polynomial nonlinearities and established global existence and uniqueness of strong solutions, as well as invariance of the $L^2$-unit sphere and strong convergence of positive solutions toward the unique positive ground state. Subsequently, Bawalia et al. \cite{AB+ZB+MTM+PR} applied a \L{}ojasiewicz--Simon gradient inequality to study a nonlinear heat equation (see also \cite{PR-06})  on a Hilbert Manifold  with finite-codimensional constraints.
%The aforementioned work of Bawalia et al. \cite{AB+ZB+MTM+PR} further demonstrates how the \L ojasiewicz--Simon inequality can be incorporated into this constrained setting to obtain convergence toward stationary states.

Norm-preserving constraints have also arisen naturally in other classes of evolution equations. Brze\'zniak et al. \cite{ZB+GD+MM-18} investigated constrained two-dimensional Navier--Stokes equations on $\mathbb{R}^2$ and $\mathbb{T}^2$, with particular emphasis on global well-posedness and the vanishing-viscosity limit. Geometric constraints are also fundamental in the study of wave maps on manifolds; see \cite{DT-04,DT-05}. In the stochastic setting, Brze\'zniak and Hussain \cite{ZB+JH-20} initiated the study of constrained stochastic nonlinear heat equations with Stratonovich-type noise, proving existence and uniqueness of mild solutions in smooth bounded two-dimensional domains. Bawalia et al. \cite{AB+ZB+MTM2} subsequently extended this framework to arbitrary spatial dimensions and polynomial nonlinearities of order $2\leq p<\infty$. For constrained stochastic Navier--Stokes equations, Brze\'zniak et al. \cite{ZB+GD-21} constructed martingale solutions driven by multiplicative Gaussian noise in Stratonovich form, established pathwise uniqueness, and obtained strong solutions through a Yamada--Watanabe-type argument. The manuscript \cite{ABH-26} presented  a rigorous and well-structured analysis of the sphere-constrained modified Swift--Hohenberg equation, establishing global well-posedness, energy dissipation, restricted global attractors, and equilibrium structure, with appropriate care regarding the limitations of the stability conclusions.

%More recently, constrained stochastic damped wave equations have received considerable attention. Brze\'zniak and Cerrai \cite{ZB+SC-25} established existence and uniqueness results for constrained stochastic nonlinear damped wave equations driven by multiplicative Gaussian noise and studied their limiting behavior. Subsequently, Cerrai and Xie \cite{SC+MX-25} considered a related model with an additional velocity-dependent diffusion term.
 Recently, constrained stochastic damped wave equations have received considerable attention. Brzeźniak and Cerrai \cite{ZB+SC-25} established the existence ad uniqueness of constrained stochastic nonlinear damped wave equation perturbed by multipicative guassian noise and also studied the limiting behaviour by the help of Smoluchowski-Kramers approximation and this result extended by Cerrai and  Xie  \cite{SC+MX-25} where the diffusion term contains additional velocity  and estblished their well-posedness and analyze it's limiting behaviour where perturbed small mass tends to 0.

	\subsection{Difficulties, approaches, and novelities}
The analysis of the constrained nonlinear damped wave equation presents
several difficulties that distinguish it from the standard nonlinear
damped wave equation and from unconstrained dissipative flows. The first
arises from the $L^2$-normalization constraint, which requires the solution
to remain on the constraint manifold throughout the evolution. This
constraint introduces a time-dependent Lagrange multiplier determined by
the solution itself. In particular, the term $-\|\vartheta_t\|_{L^2(\varOmega)}^2\vartheta$ in the
multiplier represents the curvature correction associated with the
constraint manifold. Its interaction with the damping term requires
careful treatment in the energy estimates. Moreover, preservation of the
constraint must be established both for the Galerkin approximations and
in the limiting process.

A second difficulty concerns the energy structure. Unlike the Hamiltonian
setting, the natural energy is not conserved, and its dissipation is not
directly apparent from the equation. The specific form of the Lagrange
multiplier, together with the propagated orthogonality condition
$(\vartheta_t,\vartheta)_{L^2(\varOmega)}=0$, yields the exact dissipation identity
\begin{equation}\label{eqn-energy-diss}
	\frac{d}{dt}E(\vartheta (t),\vartheta_t(t))
	=-\g\|\vartheta_t(t)\|_{L^2(\varOmega)}^2\leq 0,
\end{equation}
where 
\begin{align*}
	E(\vartheta (t),\vartheta_t(t)):=\frac12\|\vartheta_t(t)\|_{L^2(\varOmega)}^2+\frac12\|\nabla \vartheta (t)\|_{L^2(\varOmega)}^2+\frac1p\|\vartheta (t)\|_{L^p(\varOmega)}^p,
	\ \  t\ge0.
\end{align*}
This identity provides the fundamental global bound. For strong
solutions, however, higher-order estimates are required to control the
nonlinear terms. To obtain these estimates, we impose a suitable
restriction on the nonlinear exponent $p$ (see \eqref{eqn-restriction}), which allows the required
Sobolev estimates to be applied. Together with the local Lipschitz
properties of the nonlinearities and a constraint-preserving
Faedo--Galerkin scheme, these estimates provide the framework for
establishing global strong solutions while retaining the normalization
constraint (Theorem \ref{wdss}).

The main difficulty in the asymptotic analysis of
\eqref{eq:mains}--\eqref{MID} lies in establishing convergence as
$t\to\infty$. For $\gamma>0$ and the values of $p$ given in \eqref{p range}, the energy dissipation identity
\eqref{eqn-energy-diss} implies the existence of a sequence
$t_n\to\infty$ such that
\[
\vartheta_t(t_n)\to0
\ \text{ in }\ L^2(\varOmega),
\]
while, along a subsequence,
\[
\vartheta(t_n)\xrightarrow{w}\vartheta_\infty
\ \text{ weakly in }\ H_0^1(\varOmega)
\]
for some $\vartheta_\infty\in H_0^1(\varOmega)$ (Lemma~\ref{sub seq conv soln}).
However, this argument only provides convergence along a suitably
chosen sequence of times. It does not imply that every sequence
$t_n\to\infty$ leads to the same limit, nor does it establish the
existence of a limit for the full trajectory as $t\to\infty$.

To overcome this difficulty, we first establish, using semigroup
arguments, \emph{Webb's theorem} \cite[Proposition 3.2, (3.7)--(3.9)]{Webb79} (see \cite[Lemma~7.6.2]{AH+MAJ-15} also), and the Bolzano--Weierstrass theorem, that
the orbit
\[
\mathcal{O}(\vartheta_0,\vartheta_1)
:=
\{(\vartheta (t),\vartheta_t(t)):t\geq0\}
\]
is precompact (Theorem~\ref{apct} and Lemma \ref{compact B}). We then apply \emph{Barbalat's lemma}
\cite[Theorem 5]{BL} to (Proposition~\ref{convergence of ut to 0})
deduce that
\[
\vartheta_t(t)\to0
\ \text{ in }\ L^2(\varOmega)
\ \text{ as }\ t\to\infty. 
\]
 By means of a time translation argument for
$\vartheta(\cdot)$, we obtain stronger convergence properties (Lemmas~\ref{lem:uniform_shift}
and~\ref{lamda t+tk conv}), which allow us to show that 
\[
\vartheta (t_n)\to \vartheta_\infty
\ \text{ strongly  in }\ H_0^1(\varOmega)
\]
and  identify the limiting function
$\vartheta_\infty$ as a stationary solution (Theorem~\ref{thm-strong}), namely,
\begin{equation}\label{stationary-problem}
	\left\{
	\begin{aligned}
		-\Delta\vartheta_\infty+|\vartheta_\infty|^{p-2}\vartheta_\infty
		&=\lambda_\infty \vartheta_\infty
		&&\text{in }\varOmega,\\
		\vartheta_\infty&=0
		&&\text{on }\partial\varOmega,\\
		\|\vartheta_\infty\|_{L^2(\varOmega)}&=1,
	\end{aligned}
	\right.
\end{equation}
where
\[
\lambda_\infty
=
\|\nabla \vartheta_\infty\|_{L^2(\varOmega)}^2
+
\|\vartheta_\infty\|_{L^p(\varOmega)}^p.
\]
The above argument, however, does not by itself establish convergence
of the entire trajectory; it relies initially on the extraction of a
subsequence of times $t_n\to\infty$. This naturally raises the question of whether every solution converges, as \(t\to\infty\), to a stationary solution. The stationary problem \eqref{stationary-problem} has atleast countably many solutions by the Lusternik--Schnirelmann minmax theory (Lemma \ref{lem-PS} and Proposition \ref{PS}). Consequently, one cannot expect all trajectories to converge to one and the same stationary solution. Indeed, if $v$ is any stationary solution, then the initial data
$
\vartheta(0)=v,\ \vartheta_t(0)=0$
generate the stationary trajectory
$
\vartheta (t)\equiv v.
$
Thus, different stationary solutions correspond to different
stationary trajectories.

On the other hand, the stationary problem \eqref{stationary-problem} possesses a \emph{unique positive ground-state solution} $\varphi_1$ (Theorem \ref{thm:ground-state} and Lemma~\ref{isolation of phi}). In the case of the constrained heat equation, the maximum principle can be used to exploit positivity of the initial data and to establish convergence towards $\varphi_1$ (\cite[Theorem 4.3]{PA+PC+BS-24}, \cite[Theorem 5.13]{AB+ZB+MTM}). Such a maximum-principle argument is not available for the constrained damped wave equation. This presents a significant obstacle to proving convergence towards the ground state for arbitrary initial data.

We therefore restrict our attention to initial data sufficiently close
to the ground state $\varphi_1$ (Lemma~\ref{lem-energy}). For such
initial data, we prove that the corresponding solution remains trapped
in a suitable neighborhood of $\varphi_1$ (Lemma~\ref{trapped}) and, consequently (Theorem~\ref{GB}),
\[
\vartheta (t)\to\varphi_1 \ \text{ in }\  H_0^{1}(\varOmega)
\ \text{ as }\  t\to\infty. 
\]

To the best of our knowledge, a convergence result of this type, in
which convergence to a stationary state is established for the entire
trajectory rather than only along a subsequence of times, is not
available in the existing literature for constrained damped wave
equations. The overall novelty lies in combining well-posedness, asymptotic precompactness,
sequential convergence, the passage to full-trajectory convergence, and
the isolation of the first nonlinear eigenvalue to characterize the
unique asymptotic state without relying on a maximum-principle argument.

	\subsection{Main results} 
	We begin by defining the notion of a strong solution to problem~\eqref{eq:mains}--\eqref{MID}. Throughout, $\mathcal{A}$ denotes	the Dirichlet Laplacian introduced in Subsection~\ref{sub-laplace}.
	\begin{definition}\label{def-strong}
		Let us fix $p$ as in \eqref{eqn-restriction}. 	Let $T>0$ and let
		\begin{equation*}
			\vartheta_0\in D(\A)\cap\mathcal{M},\ 
			\vartheta_1\in H_0^1(\varOmega)\ \text{ and }\ 
			(\vartheta_0,\vartheta_1)_{L^2(\varOmega)}=0.
		\end{equation*}
		Then a function
		\begin{equation*}
			\vartheta\in C([0,T]; H^1_0(\varOmega)\cap \bM) \cap C_w([0,T]; D(\A)),
		\end{equation*}
		with 
		\begin{equation*}
			\vartheta_t\in C([0,T];L^2(\varOmega))
			\cap C_w([0,T];H_0^1(\varOmega)),
		\end{equation*}
		is called \emph{strong solution} if it satisfies the following:
		\begin{itemize}
			\item $\vartheta_{tt}\in L^\infty(0,T;L^2(\varOmega))$ and	$\N(\vartheta,\vartheta_t)\in L^\infty(0,T;L^2(\varOmega)),$
			where
			\begin{equation*}
				\N(\vartheta,\vartheta_t)=
				-\A\vartheta-|\vartheta|^{p-2}\vartheta-\g \vartheta_t
				+\left(\|\nabla\vartheta\|_{L^2(\varOmega)}^2+\|\vartheta\|_{L^p(\varOmega)}^p-\|\vartheta_t\|_{L^2(\varOmega)}^2\right)\vartheta .
			\end{equation*}
			\item $\vartheta_{tt}=\N(\vartheta,\vartheta_t)  \ \text{ in }\   L^2(\varOmega)\ \text{ for a.e. }\ t\in (0,T).$\vspace{0.2cm}
			\item The initial conditions hold:
			\begin{equation*}
				\vartheta(0)=\vartheta_0\ \text{ in }\ D(\A),\ 
				\vartheta_t(0)=\vartheta_1\ \text{ in }\ H^1_0(\varOmega).
			\end{equation*}
		\end{itemize}
	\end{definition}
	
	The following theorem provides the existence and uniqueness of strong solutions to problem~\eqref{eq:mains}--\eqref{MID}.
	
	\begin{theorem}[Existence and uniqueness of strong solutions]\label{wdss}
		Let us fix $p$ as in \eqref{eqn-restriction} and  $T>0$ be given. Then for any initial data
		\begin{equation}\label{eq:IVP}
			\vartheta_0 \in D(\A)\cap \mathcal{M},
			\ 
			\vartheta_1 \in H_0^1(\varOmega),
		\end{equation}
		there exists a unique strong solution $	\vartheta:[0,T]\to H_0^1(\varOmega)\cap \bM$
		such that$$	\vartheta\in C([0,T]; H^1_0(\varOmega)\cap \bM) \cap C_w([0,T]; D(\A)),$$
		together with $$\vartheta_t\in C([0,T]; L^2(\varOmega))\cap C_w([0,T];H^1_0(\varOmega)),$$
		and $\vartheta$ satisfies \eqref{eq:mains}-\eqref{MID}  in the sense of Definition \ref{def-strong}. Moreover, the solution remains on the manifold $\bM$, that is,
		$$\vartheta (t)\in \bM,\ \text{ for all }\ t\in[0,T].$$
	\end{theorem}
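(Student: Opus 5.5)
We plan to use a constraint-preserving Faedo--Galerkin scheme. Let $\{e_k\}$ be the $L^2$-orthonormal Dirichlet eigenfunctions of $\A$, and set $H_n=\mathrm{span}\{e_1,\dots,e_n\}$ with orthogonal projection $P_n$. For the initial data we take $\vartheta_{0n}=P_n\vartheta_0/\|P_n\vartheta_0\|_{L^2(\varOmega)}$, which is well defined for $n$ large. For the initial velocity we take $\vartheta_{1n}=P_n\vartheta_1-(P_n\vartheta_1,\vartheta_{0n})_{L^2(\varOmega)}\vartheta_{0n}$. Then $\vartheta_{0n}\in\bM\cap H_n$, $(\vartheta_{0n},\vartheta_{1n})_{L^2(\varOmega)}=0$, $\vartheta_{0n}\to\vartheta_0$ in $D(\A)$ and $\vartheta_{1n}\to\vartheta_1$ in $H_0^1(\varOmega)$. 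We then solve the ODE system
$$\vartheta_n''=-\A\vartheta_n-P_n(|\vartheta_n|^{p-2}\vartheta_n)-\g\vartheta_n'+\bigl(\|\nabla\vartheta_n\|_{L^2(\varOmega)}^2+\|\vartheta_n\|_{L^p(\varOmega)}^p-\|\vartheta_n'\|_{L^2(\varOmega)}^2\bigr)\vartheta_n$$
in $H_n$. Its right-hand side is locally Lipschitz, so a local solution exists. Since $\vartheta_n\in H_n$, we have $(P_n f,\vartheta_n)=(f,\vartheta_n)$, and therefore $(\vartheta_n'',\vartheta_n)=-\|\vartheta_n'\|^2-\g(\vartheta_n',\vartheta_n)+(1-\|\vartheta_n\|^2)(\ldots)$. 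Writing $y=\|\vartheta_n\|^2-1$, one obtains a linear second-order ODE for $y$ with continuous coefficients and zero initial data, namely $y''+\g y'+2(\ldots)y=0$. Hence $y\equiv0$, so the approximations stay on $\bM$ and satisfy $(\vartheta_n,\vartheta_n')=0$.

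For the a priori estimates, we first test with $\vartheta_n'$. The multiplier term vanishes by orthogonality, which gives the dissipation identity \eqref{eqn-energy-diss} for $E(\vartheta_n,\vartheta_n')$. This yields uniform bounds on $\vartheta_n$ in $L^\infty(0,T;H_0^1\cap L^p)$ and on $\vartheta_n'$ in $L^\infty(0,T;L^2)$, and it also shows the ODE solution is global. In particular the multiplier $\lambda_n(t)$ is bounded in $L^\infty(0,T)$. For the higher-order estimate we test with $\A\vartheta_n'$, that is, we differentiate $\frac12\|\nabla\vartheta_n'\|^2+\frac12\|\A\vartheta_n\|^2$. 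The new terms are $(\nabla(|\vartheta_n|^{p-2}\vartheta_n),\nabla\vartheta_n')$ and $\lambda_n(\nabla\vartheta_n,\nabla\vartheta_n')$. The latter is bounded by $C\|\nabla\vartheta_n'\|$. For the former we bound $(p-1)\||\vartheta_n|^{p-2}\nabla\vartheta_n\|_{L^2(\varOmega)}$ by Hölder: $\|\vartheta_n\|_{L^{q}}^{p-2}\|\nabla\vartheta_n\|_{L^{r}}$, with $\nabla\vartheta_n\in L^{2d/(d-2)}$ controlled by $\|\A\vartheta_n\|$ and $\vartheta_n$ controlled in $L^{2d/(d-2)}$ by the energy. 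This needs $(p-2)d/2\le 2d/(d-2)$... more precisely $\frac{p-2}{q}+\frac{d-2}{2d}=\frac12$ with $q=\frac{2d}{d-2}$, i.e.\ $p-2=\frac{2}{d-2}$, which is exactly $p=\frac{2(d-1)}{d-2}$. This is where \eqref{eqn-restriction} enters, and it yields a linear bound $C\|\A\vartheta_n\|$. For $d=1,2$ we use Agmon/Gagliardo--Nirenberg or Brezis--Gallouet type interpolation, which gives at worst $\|\A\vartheta_n\|\log$-growth, and Osgood/Gronwall still closes. Gronwall then gives $\vartheta_n$ bounded in $L^\infty(0,T;D(\A))$ and $\vartheta_n'$ bounded in $L^\infty(0,T;H_0^1)$. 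From the equation, $\vartheta_n''$ is bounded in $L^\infty(0,T;L^2)$. The initial quantity $\|\A\vartheta_{0n}\|+\|\nabla\vartheta_{1n}\|$ is bounded because of the choice of initial data. I expect this higher-order estimate to be the main obstacle.

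For passage to the limit, Banach--Alaoglu and Aubin--Lions give a subsequence with $\vartheta_n\to\vartheta$ weak-star in $L^\infty(D(\A))$ and strongly in $C([0,T];H_0^1)$, and $\vartheta_n'\to\vartheta_t$ weak-star in $L^\infty(H_0^1)$ and strongly in $C([0,T];L^2)$. These strong convergences make $\|\nabla\vartheta_n\|^2$, $\|\vartheta_n\|_{L^p}^p$, $\|\vartheta_n'\|^2$ and $|\vartheta_n|^{p-2}\vartheta_n$ converge, so the equation holds in $L^2$ for a.e.\ $t$. The constraint $\|\vartheta(t)\|=1$ passes to the limit by uniform convergence in $L^2$. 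The regularity $C_w([0,T];D(\A))$ and $C_w([0,T];H_0^1)$ follows from the $L^\infty$ bounds together with strong continuity in weaker spaces (the Strauss lemma).

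For uniqueness, let $w=\vartheta^1-\vartheta^2$ and test the difference equation with $w_t$. We estimate $\||\vartheta^1|^{p-2}\vartheta^1-|\vartheta^2|^{p-2}\vartheta^2\|_{L^2}\le C(\|\vartheta^1\|_{L^\infty\text{ or }L^{q}}^{p-2}+\ldots)\|w\|_{H_0^1}$, using the $D(\A)$ bounds and the same exponent range. The multiplier differences are bounded by $C(\|\nabla w\|+\|w_t\|)$ using the $H_0^1$ and $L^2$ bounds. Multiplying by the bounded $\vartheta^i$ gives
$$\tfrac{d}{dt}\bigl(\|w_t\|^2+\|\nabla w\|^2\bigr)\le C\bigl(\|w_t\|^2+\|\nabla w\|^2\bigr),$$
and Gronwall gives $w\equiv0$.
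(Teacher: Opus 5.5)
Your proposal is correct and follows essentially the same route as the paper: the same normalized Faedo--Galerkin initial data; invariance of $\bM$ via a linear second-order ODE for $\|\vartheta_n\|^2-1$; the energy estimate; the higher-order estimate obtained by testing with $\A\vartheta_n'$, with the same H\"older/Sobolev exponent count forcing $p\le\frac{2(d-1)}{d-2}$; Aubin--Lions and the Strauss lemma for the limit; and a Gronwall energy argument on the difference for uniqueness. Two minor points in your favour: your ODE for $\|\vartheta_n\|^2-1$ keeps the damping coefficient $\gamma$, which the paper's version drops, and your separate treatment of $d=1,2$ covers cases that the paper's $L^{d(p-2)}$--$L^{2d/(d-2)}$ H\"older step does not literally handle.
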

	
The proof of Theorem~\ref{wdss} is given in Section~\ref{WP}. 
For the asymptotic analysis, we fix our $p$ as 
	\begin{equation}\label{p range}
	p\in
	\begin{cases}
		[2,\infty), & d=1,2,\\
		\left[2,\dfrac{2(d-1)}{d-2}\right), & d\ge 3.
	\end{cases}
\end{equation}
The following result describes the asymptotic behaviour of solutions to \eqref{eq:mains}--\eqref{MID} along a sequence of times $t_n\to\infty$.
	
		\begin{theorem}\label{thm-strong}
	Let $\vartheta_0\in D(\mathcal{A})\cap\mathcal{M}$ and $\vartheta_1\in H_0^1(\varOmega)$ with $(\vartheta_0,\vartheta_1)=0$. 	Let $\vartheta$ be the unique strong solution of \eqref{eq:mains}--\eqref{MID}. Then there exists a sequence $\{t_n\}_{n\in\mathbb{N}}$, $t_n\to\infty$ as $n\to\infty$ and $\vartheta_{\infty}\in H_0^1(\varOmega)\cap\mathcal{M}$  such that 
		\begin{equation*}
			\vartheta (t_n)\to  \vartheta_\infty\ \text{ strongly in }\ H_0^1(\varOmega).
		\end{equation*}
		Moreover, $\vartheta_\infty$ is a  weak solution of the following system:
		\begin{equation}\label{stationary pde}
			\left\{
			\begin{aligned}
				\A\vartheta_\infty+|\vartheta_\infty|^{p-2}\vartheta_\infty
				&=\lambda_\infty \vartheta_\infty
				&&\text{ in }\ H^{-1}(\varOmega),\\
				%\vartheta_{\infty}&=0  &&\text{ on }\ \partial\varOmega,\\
				\|\vartheta_\infty\|_{L^2(\varOmega)}&=1,
			\end{aligned}
			\right. 
		\end{equation}
		where $	\lambda_{\infty}= \|\nabla\vartheta_\infty\|^2 _{L^2(\varOmega)}+\|\vartheta_\infty\|^p_{L^p(\varOmega)}.$
	\end{theorem}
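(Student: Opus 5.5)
The plan is the one outlined in the introduction: an energy bound, then precompactness, then $\vartheta_t\to0$, then time translation, then passage to the limit in the equation.

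\textbf{Step 1 (energy bound and weak limit).} The dissipation identity \eqref{eqn-energy-diss} gives
\[
E(\vartheta(t),\vartheta_t(t))+\g\int_0^t\|\vartheta_t(s)\|_{L^2(\varOmega)}^2\,ds=E(\vartheta_0,\vartheta_1).
\]
Hence $\vartheta$ is bounded in $L^\infty(0,\infty;H_0^1(\varOmega))$, $\vartheta_t$ is bounded in $L^\infty(0,\infty;L^2(\varOmega))$, and $\vartheta_t\in L^2(0,\infty;L^2(\varOmega))$. This already yields a sequence $t_n\to\infty$ with $\vartheta_t(t_n)\to0$ in $L^2(\varOmega)$. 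Along a further subsequence, $\vartheta(t_n)\rightharpoonup\vartheta_\infty$ weakly in $H_0^1(\varOmega)$. By Rellich, the convergence is strong in $L^2(\varOmega)$, so $\|\vartheta_\infty\|_{L^2(\varOmega)}=1$. Since $p$ is strictly subcritical in \eqref{p range}, the convergence is also strong in $L^p(\varOmega)$.

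\textbf{Step 2 (strong convergence in $H_0^1$).} Weak convergence is not enough, so I will upgrade it using precompactness of the orbit $\mathcal{O}(\vartheta_0,\vartheta_1)$ in $H_0^1(\varOmega)\times L^2(\varOmega)$. To get this, I write the equation as the linear damped wave semigroup $S(t)$ on $H_0^1\times L^2$, which is exponentially stable for $\g>0$, perturbed by the forcing
\[
f(t)=-|\vartheta|^{p-2}\vartheta+\lambda(t)\vartheta,\qquad \lambda(t)=\|\nabla\vartheta\|_{L^2(\varOmega)}^2+\|\vartheta\|_{L^p(\varOmega)}^p-\|\vartheta_t\|_{L^2(\varOmega)}^2 .
\]
By Step 1, $\lambda(t)$ is bounded. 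Under \eqref{p range}, the map $\vartheta\mapsto|\vartheta|^{p-2}\vartheta$ sends bounded sets of $H_0^1$ into bounded sets of $H^{\varepsilon}$ for some $\varepsilon>0$; the strict inequality $p<\frac{2(d-1)}{d-2}$ is used exactly here. So $f$ takes values in a compact subset of $L^2(\varOmega)$, and by Duhamel $f$ is uniformly continuous in time, hence the Duhamel integral is compactly valued. Webb's theorem then gives precompactness of the orbit: write the solution as $S(t)(\vartheta_0,\vartheta_1)$ plus the Duhamel integral, note that the first term decays and the second lies in a compact set. With precompactness, Barbalat's lemma applies to $\|\vartheta_t\|_{L^2(\varOmega)}^2$, which is integrable and uniformly continuous because $\vartheta_{tt}$ is bounded in $H^{-1}$ and $\vartheta_t$ is bounded in $L^2$ and compact. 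This gives $\vartheta_t(t)\to0$ in $L^2(\varOmega)$ for the full time variable. Precompactness also lets me extract a subsequence with $\vartheta(t_n)\to\vartheta_\infty$ strongly in $H_0^1(\varOmega)$, which proves the first assertion.

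\textbf{Step 3 (identifying the limit).} I consider the shifted functions $\vartheta_n(s)=\vartheta(t_n+s)$ for $s\in[0,1]$. By continuous dependence, or by the precompactness argument applied uniformly in $s$, $\vartheta_n\to\vartheta_\infty$ in $C([0,1];H_0^1(\varOmega))$. By Step 2, $\vartheta_{n,t}\to0$ in $C([0,1];L^2(\varOmega))$, and therefore $\lambda(t_n+s)\to\lambda_\infty$ uniformly on $[0,1]$. I test the equation against $\phi\in H_0^1(\varOmega)$ and integrate over $[0,1]$. The term $\int_0^1(\vartheta_{n,tt},\phi)\,ds=(\vartheta_{n,t}(1)-\vartheta_{n,t}(0),\phi)$ tends to $0$, and the damping term tends to $0$ as well. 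The remaining terms converge to $(\nabla\vartheta_\infty,\nabla\phi)+(|\vartheta_\infty|^{p-2}\vartheta_\infty,\phi)-\lambda_\infty(\vartheta_\infty,\phi)$, which gives \eqref{stationary pde}.

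The main obstacle is Step 2: establishing compactness of the nonlinear forcing term $f$, which is what Webb's theorem requires. This is where the strict restriction in \eqref{p range} must be used, through a fractional Sobolev estimate on $|\vartheta|^{p-2}\vartheta$.
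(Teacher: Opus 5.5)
Your proposal is correct and follows essentially the same route as the paper: the energy dissipation identity, then precompactness of the orbit in $H_0^1(\varOmega)\times L^2(\varOmega)$ via Webb's theorem, then Barbalat's lemma to get $\vartheta_t(t)\to0$ in $L^2(\varOmega)$, and finally time-shifted trajectories to pass to the limit in the equation tested against $\phi\in H_0^1(\varOmega)$; in the Webb step, as in the paper, the linear part is the exponentially stable damped wave semigroup, the nonlinearity is compact, and that compactness is where $p<\frac{2(d-1)}{d-2}$ enters. The differences are minor: you get compactness of $|\vartheta|^{p-2}\vartheta$ from an $H^{\varepsilon}$ bound rather than from the compact embedding $H_0^1(\varOmega)\hookrightarrow\hookrightarrow L^{2(p-1)}(\varOmega)$, you apply Barbalat to $\|\vartheta_t\|_{L^2(\varOmega)}^2$ rather than in $H^{-1}(\varOmega)$, and you read off strong $H_0^1$ convergence of $\vartheta(t_n)$ directly from precompactness plus the weak limit, whereas the paper recovers it via Radon--Riesz after identifying the stationary equation.
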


	The proof of Theorem~\ref{sub-sec-asy} is given in
	Subsection~\ref{sub-sec-asy}. We next establish a result showing that,
	if the initial data are sufficiently close to the positive ground state
	$\varphi_1$, then the entire trajectory $\vartheta (t)$ converges to
	$\varphi_1$ as $t\to\infty$.

	\begin{theorem}[Global convergence]\label{GB}
		Suppose that the strong solution $\vartheta$ of \eqref{eq:mains}--\eqref{MID}, with
		initial data
		$	(\vartheta_0,\vartheta_1)\in D(\A)\times H_0^1(\varOmega)
		\subset H_0^1(\varOmega)\times L^2(\varOmega),	$
		satisfies
		\[
		\|\vartheta_0-\varphi_1\|_{H_0^1(\varOmega)}<r, \ \ 0< r<\frac{1}{2}\|\varphi_{1}\|_{H_0^1(\varOmega)},
		\]
	where $\varphi_1$ is the unique positive ground state solution of \eqref{stationary pde}. 	If $\vartheta_0$ and $\vartheta_1$ are chosen in such a way that 
		\begin{align}\label{eqn-energy-d}
			E(\vartheta(0),\vartheta_t(0))
			=
			\frac12\|\vartheta_1\|_{L^2(\varOmega)}^2
			+\frac12\|\nabla \vartheta_0\|_{L^2(\varOmega)}^2
			+\frac1p\|\vartheta_0\|_{L^p(\varOmega)}^p
			<
			E(\varphi_1,0)+\eta,
		\end{align}
		for some $\eta>0$.  
		Then   $(\vartheta,\vartheta_t)$  converges strongly to the equilibrium $(\varphi_1,0),$ as $t\to\infty$, that is,
		\begin{equation}
			(\vartheta (t),\vartheta_t(t))
			\to
			(\varphi_1,0)\ \text{ in } \ H_0^1(\varOmega)\times L^2(\varOmega)\ \text{ as }\  t\to\infty.
		\end{equation}
	\end{theorem}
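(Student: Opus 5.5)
The plan is to combine the energy dissipation identity \eqref{eqn-energy-diss} with the precompactness of the orbit and the isolation of the ground state. We use the variational characterization of $\varphi_1$: it minimizes
\[
J(u)=\tfrac12\|\nabla u\|_{L^2(\varOmega)}^2+\tfrac1p\|u\|_{L^p(\varOmega)}^p
\]
on $H_0^1(\varOmega)\cap\bM$. This minimizer is unique up to sign (Theorem \ref{thm:ground-state}), and $\varphi_1$ is isolated among stationary solutions of \eqref{stationary pde} (Lemma \ref{isolation of phi}).

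\textbf{Step 1: energy gap on a sphere.} Choose $\delta\in(0,r)$ so small that $\varphi_1$ is the only stationary solution in the closed ball $\overline{B}_{H_0^1}(\varphi_1,\delta)$. Since $0<r<\frac12\|\varphi_1\|_{H_0^1}$, the ball around $\varphi_1$ stays away from $-\varphi_1$. We then show there is $\eta>0$, depending on $\delta$, with
\[
\inf\{J(u):u\in H_0^1(\varOmega)\cap\bM,\ \|u-\varphi_1\|_{H_0^1(\varOmega)}=\delta\}\ge J(\varphi_1)+2\eta .
\]
Suppose not. A minimizing sequence on this sphere with $J(u_n)\to J(\varphi_1)$ is bounded in $H_0^1$ and, by compact embedding, has a weak limit $u\in\bM$. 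Weak lower semicontinuity gives $J(u)\le J(\varphi_1)$, so $u=\pm\varphi_1$. Convergence of the norms then upgrades weak to strong $H_0^1$ convergence, which contradicts $\|u_n-\varphi_1\|_{H_0^1}=\delta$ and $\|u_n+\varphi_1\|_{H_0^1}\ge 2\|\varphi_1\|_{H_0^1}-\delta>\delta$. This is essentially Lemma \ref{lem-energy}.

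\textbf{Step 2: trapping.} Suppose $\|\vartheta_0-\varphi_1\|_{H_0^1}<\delta$ and $E(\vartheta_0,\vartheta_1)<E(\varphi_1,0)+\eta$, shrinking $r$ to $\delta$ if necessary as the statement allows. By \eqref{eqn-energy-diss},
\[
J(\vartheta(t))\le E(\vartheta(t),\vartheta_t(t))\le E(\vartheta_0,\vartheta_1)<J(\varphi_1)+\eta \quad\text{for all } t.
\]
The map $t\mapsto\vartheta(t)$ is continuous into $H_0^1$ (Theorem \ref{wdss}) and stays on $\bM$. If it ever reached the sphere of radius $\delta$, Step 1 would force $J\ge J(\varphi_1)+2\eta$, a contradiction. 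Hence $\|\vartheta(t)-\varphi_1\|_{H_0^1}<\delta$ for all $t\ge0$ (Lemma \ref{trapped}).

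\textbf{Step 3: identify the $\omega$-limit.} The orbit is precompact (Theorem \ref{apct}, Lemma \ref{compact B}), and $\vartheta_t(t)\to0$ in $L^2$ by Proposition \ref{convergence of ut to 0}. Take any sequence $t_n\to\infty$. A subsequence gives $\vartheta(t_n)\to\vartheta_\infty$ in $H_0^1$. Repeating the time-translation argument behind Theorem \ref{thm-strong} (Lemmas \ref{lem:uniform_shift}, \ref{lamda t+tk conv}) along this arbitrary sequence shows that $\vartheta_\infty$ is a stationary solution. By Step 2, $\vartheta_\infty\in\overline{B}(\varphi_1,\delta)$, so $\vartheta_\infty=\varphi_1$. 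Every sequence therefore has a subsequence with the same limit, so $\vartheta(t)\to\varphi_1$ in $H_0^1$, and together with $\vartheta_t\to0$ in $L^2$ this gives the claim.

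\textbf{Main obstacle.} The hardest point is Step 3 for an arbitrary sequence $t_n$. Theorem \ref{thm-strong} is stated only for a specially chosen sequence, so I must check that the translation argument uses only $\int_0^\infty\|\vartheta_t\|^2<\infty$ and precompactness, and not the particular choice of $t_n$. The isolation property in Step 1 also needs care, which is why I invoke Lemma \ref{isolation of phi} rather than the variational argument alone.
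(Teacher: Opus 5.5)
Your proposal is correct and follows the paper's proof essentially step for step: the energy gap (Lemma~\ref{lem-energy}), trapping (Lemma~\ref{trapped}), stationarity of every $\omega$-limit point via precompactness, $\vartheta_t\to0$ and the time-translation argument, isolation of $\varphi_1$, and the subsequence argument for convergence of the whole trajectory. Relying only on isolation in a small ball $\overline{\mathcal{B}_\delta(\varphi_1)}$ is more careful than the paper, since Remark~\ref{rem-isolated} gives only some neighbourhood, whereas the paper asserts $\overline{\mathcal{B}_r(\varphi_1)}\cap\mathcal{E}=\{\varphi_1\}$ for every admissible $r$; the data with $\delta\le\|\vartheta_0-\varphi_1\|_{H_0^1(\varOmega)}<r$ that your ``shrinking'' skips can be excluded by also taking $\eta$ below the energy gap on the closed annulus $\delta\le\|u-\varphi_1\|_{H_0^1(\varOmega)}\le r$ (same compactness proof as your Step~1), which forces $\vartheta_0\in\mathcal{B}_\delta(\varphi_1)$.
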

	
	The proof of Theorem \ref{GB} is provided in Subsection \ref{sub-long-time}. 

	\subsection{Organization of the paper}

	The rest of the paper is organized as follows. In Section \ref{pre}, we introduce the functional setting and recall the basic properties of the Dirichlet Laplacian and the Sobolev embeddings used in the analysis. Section \ref{LLC} establishes the local Lipschitz continuity of the nonlinear mapping, which is then used to prove the well-posedness of the problem in Theorem \ref{wdss}. In Section \ref{WP}, we construct the Faedo--Galerkin approximations and establish the preservation of the constraint, uniform energy estimates, and higher-order estimates. Using suitable compactness arguments, we prove the existence and uniqueness of global strong solutions of problem \eqref{eq:mains}--\eqref{MID}.
	
	In Section \ref{AA}, we study the long-time behavior of the solutions of \eqref{eq:mains}--\eqref{MID}. We establish the dissipation of energy given in \eqref{eqn-energy-diss} and analyze its limiting behavior and the asymptotic dynamics along suitable sequences of times. In particular, Lemma \ref{sub seq conv soln} provides the sequential convergence required for the subsequent analysis, while Theorem \ref{apct} establishes the asymptotic precompactness of the trajectories which help us to characterizes the $\omega$-limit set. We further investigate the associated stationary nonlinear elliptic problem and the corresponding ground-state solutions (Theorem \ref{thm:ground-state} and Lemma \ref{isolation of phi}). Finally, the proof of Theorem~\ref{GB} is presented in Section \ref{AA}. The theorem establishes the stabilization and global convergence of the	solutions towards the ground-state set under suitable assumptions.
	
	Lastly, in Section \ref{appendix}, we prove that the stationary problem admits a countable family of distinct normalized solutions; see Proposition \ref{PS}. The proof relies on the Lusternik--Schnirelmann theory \cite{PD+JM-07} , the Palais--Smale condition, and the Krasnoselskii genus \cite{AA+AM-07}. This result plays an important role in establishing our main result, Theorem \ref{GB}.

	\section{Preliminaries}\label{pre}
	
	In this section, we introduce the functional spaces and notation used throughout the paper, including the Lebesgue, Sobolev, and Bochner spaces, together with their corresponding norms and inner products. Moreover, we discuss the locally Lipschitz property of linear and nonlinear operators. 
\subsection{Functional framework}

Let $\varOmega\subset\mathbb{R}^d$ be a bounded domain with smooth boundary. For $1\leq p\leq\infty$, we denote by $L^p(\varOmega),$ the space of equivalence classes of measurable functions $\psi:\varOmega\to\mathbb{R}$ for which $\|\psi\|_{L^p(\varOmega)}<\infty$, where
\begin{equation}
	\|\psi\|_{L^p(\varOmega)}=
	\begin{cases}
		\left(\displaystyle\int_\varOmega|\psi(x)|^p d x\right)^{1/p}, & 1\leq p<\infty,\\[2mm]
		\displaystyle\operatorname*{ess\,sup}_{x\in\varOmega}|\psi(x)|, & p=\infty.
	\end{cases}
\end{equation}
The space $L^2(\varOmega)$ is equipped with the inner product $ (\psi,\varphi)
=\int_\varOmega\psi(x)\varphi(x) d x.$ We denote by $H_0^1(\varOmega),$ the Sobolev space of functions $\psi\in L^2(\varOmega)$ whose first-order weak derivatives belong to $L^2(\varOmega)$ and whose trace vanishes on $\partial\varOmega$. It is equipped with the norm $\|\psi\|_{H_0^1(\varOmega)}=\|\nabla\psi\|_{L^2(\varOmega)}$, which is equivalent to the standard $H^1(\varOmega)$ norm by the Poincar\'e inequality. The dual space of $H_0^1(\varOmega)$ is denoted by $H^{-1}(\varOmega)$. We denote by $\langle g,\psi\rangle,$ the duality pairing between $g\in H^{-1}(\varOmega)$ and $\psi\in H_0^1(\varOmega)$. The norm on $H^{-1}(\varOmega)$ is defined by
\begin{equation}
	\|g\|_{H^{-1}(\varOmega)}
	=\sup_{\psi\in H_0^1(\varOmega)\setminus\{0\}}
	\frac{|\langle g,\psi\rangle|}{\|\psi\|_{H_0^1(\varOmega)}}.
\end{equation}

Throughout the paper, $\hookrightarrow$ and $\hookrightarrow\hookrightarrow$ denote continuous and compact embeddings, respectively. In particular, by the Sobolev embedding \cite[Theorem 5.6.7, 5.7.2]{DM+DZ-98},
\begin{align}\label{embedding}
\begin{array}{c@{\qquad}c}
	H_0^1(\varOmega)\hookrightarrow L^r(\varOmega)
	&
	H_0^1(\varOmega)\hookrightarrow\hookrightarrow L^r(\varOmega)
	\\[2mm]
	\begin{cases}
		1\le r\le\infty, & d=1,\\
		1\le r<\infty, & d=2,\\
		1\le r\le\dfrac{2d}{d-2}, & d\ge3,
	\end{cases}
	&
	\begin{cases}
		1\le r\le\infty, & d=1,\\
		1\le r<\infty, & d=2,\\
		1\le r<\dfrac{2d}{d-2}, & d\ge3.
	\end{cases}
\end{array}
\end{align}

Let $\mathcal{Z}$ be a Banach space. For $0<T\leq \infty$ and $1\leq p<\infty$, we denote by $L^p(0,T;\mathcal{Z})$ the space of strongly measurable functions $\psi:(0,T)\to \mathcal{Z}$ such that
\begin{equation}
	\|\psi\|_{L^p(0,T;\mathcal{Z})}
	=\left(\int_0^T\|\psi(t)\|_\mathcal{Z}^p  dt\right)^{1/p}<\infty.
\end{equation}
For $p=\infty$, we write $L^\infty(0,T;\mathcal{Z})$ for the space of strongly measurable functions satisfying
$\|\psi\|_{L^\infty(0,T;\mathcal{Z})}=\operatorname*{ess\,sup}_{t\in(0,T)}\|\psi(t)\|_\mathcal{Z}<\infty.$ When $\mathcal{Z}$ is a Hilbert space, the Bochner space $L^2(0,T;\mathcal{Z})$ is equipped with the inner product
\begin{equation}
	(\psi,\varphi)_{L^2(0,T;\mathcal{Z})}
	=\int_0^T(\psi(t),\varphi(t))_\mathcal{Z}  dt.
\end{equation}
We denote by $C([0,T];\mathcal{Z})$ the space of strongly continuous functions $\psi:[0,T]\to \mathcal{Z}$, endowed with the norm $\|\psi\|_{C([0,T];\mathcal{Z})}=\max_{t\in[0,T]}\|\psi(t)\|_\mathcal{Z}.$ Finally, $C_w([0,T];\mathcal{Z})$ denotes the space of weakly continuous functions from $[0,T]$ into $\mathcal{Z}$, that is,
\begin{equation}
	C_w([0,T];\mathcal{Z})
	=\left\{\psi:[0,T]\to \mathcal{Z}:
	t\mapsto\langle\upsilon,\psi(t)\rangle
	\text{ is continuous for every }\upsilon\in \mathcal{Z}^*\right\}.
\end{equation}
In particular, $C([0,T];\mathcal{Z})\hookrightarrow C_w([0,T];\mathcal{Z})$.
\subsection{The Dirichlet Laplacian}\label{sub-laplace}
We define the bilinear form
\begin{equation}
	a:H_0^1(\varOmega)\times H_0^1(\varOmega)\to\mathbb{R},
	\ 
	a(\varphi,\psi):=(\nabla\varphi,\nabla\psi).
\end{equation}
Then, it is immediate that 
\begin{equation}
	a(\varphi,\varphi)
	=\|\nabla\varphi\|_{L^2(\varOmega)}^2,
	\ 
	\varphi\in H_0^1(\varOmega).
\end{equation}
The bilinear form $a$ is continuous and coercive on $H_0^1(\varOmega)$.
Hence, by the Lax--Milgram theorem, the operator
$
	A:H_0^1(\varOmega)\to H^{-1}(\varOmega)
$
defined by
\begin{equation}
	\langle A\varphi,\psi\rangle
	=a(\varphi,\psi),
	\ 
	\varphi,\psi\in H_0^1(\varOmega),
\end{equation}
is an isomorphism from $H_0^1(\varOmega)$ onto $H^{-1}(\varOmega)$. We denote by $\mathcal A$ the realization of $A$ in $L^2(\varOmega)$, with domain
$
	D(\mathcal A)
	:=
	\{\varphi\in H_0^1(\varOmega):A\varphi\in L^2(\varOmega)\},
$
and set
\begin{equation}
	\mathcal A\varphi:=A\varphi,
	\  \varphi\in D(\mathcal A).
\end{equation}
Thus, $\mathcal A$ is the negative Laplacian with homogeneous
Dirichlet boundary conditions:
$	\mathcal A\varphi=-\Delta\varphi.
$
Since $\varOmega$ is smooth and bounded, elliptic regularity yields
$
	D(\mathcal A)
	=H^2(\varOmega)\cap H_0^1(\varOmega).
$
Consequently,
\begin{equation}
	\mathcal A=-\Delta:
	D(\mathcal A)\subset L^2(\varOmega)\to L^2(\varOmega)
\end{equation}
is a densely defined, self-adjoint, positive definite operator with
compact resolvent.
%
%
%
%
%
%We define the bilinear form
%\begin{equation}
%	a:H_0^1(\varOmega)\times H_0^1(\varOmega)\to\mathbb{R}
%	\ \text{ by }\ 
%	a(\varphi,\psi)=(\nabla\varphi,\nabla\psi).
%\end{equation}
%Note that 
%\begin{equation}
%	a(\varphi,\varphi)=\|\nabla\varphi\|_{L^2(\varOmega)}^2,
%	\  \text{ for all }\ \varphi\in H_0^1(\varOmega).
%\end{equation}
%By the Riesz representation theorem, there exists a unique bounded linear operator
%\begin{equation}
%	A:H_0^1(\varOmega)\to H^{-1}(\varOmega)
%\end{equation}
%such that
%\begin{equation}
%	\langle A\varphi,\psi\rangle=a(\varphi,\psi),
%	\ 
%	\text { for all }\ \varphi,\psi\in H_0^1(\varOmega).
%\end{equation}
%Moreover, $A$ is an isomorphism from $H_0^1(\varOmega)$ onto $H^{-1}(\varOmega).$ We set
%\begin{equation}
%	D(\A)=\{\varphi\in H_0^1(\varOmega): A\varphi\in L^2(\varOmega)\},
%	\ 
%	A\varphi=\A\varphi,\  \text { for all }\ \varphi\in D(\A).
%\end{equation}
%Thus, $\A$ coincides with the negative Laplacian subject to homogeneous Dirichlet boundary conditions, namely, $\A=-\Delta.$
%Since $\varOmega$ has a smooth boundary, elliptic regularity gives $D(\A)=H^2(\varOmega)\cap H_0^1(\varOmega).$
%Consequently,
%\begin{equation}
%	\A=-\Delta:D(\A)\to L^2(\varOmega)
%\end{equation}
%is a densely defined, self-adjoint, positive definite operator with compact resolvent. 
Hence, there exist an increasing sequence of positive eigenvalues
\begin{equation}
	0<\sigma_1\leq\sigma_2\leq\cdots\leq\sigma_n\leq\cdots,\qquad \sigma_n\to\infty,
\end{equation}
and an orthonormal basis $\{w_n\}_{n=1}^\infty$ of $L^2(\varOmega)$ satisfying
\begin{equation}
	\A e_n=\sigma_n w_n,
	\ 
	w_n\in D(\A),
	\  n\in\mathbb{N}.
\end{equation}
Elliptic regularity results also imply $\|\varphi\|_{D(\mathcal{A})}:=\|\Delta\varphi\|_{L^2(\varOmega)}$. 
	\subsection{Local Lipschitz property}\label{LLC}
	
	Throughout this subsection, we assume that $\g\ge0$ and 
	\begin{equation}\label{values of p}
	\left\{
	\begin{aligned}
		p\in[2,\infty), \qquad d=1,2,3,4,\\
		2\leq p\leq \frac{2(d-2)}{d-4}, \qquad d\ge5.
	\end{aligned}
	\right.
\end{equation}
	We define the nonlinear operator $\N:D(\A)\times L^2(\varOmega)\to L^2(\varOmega)$
	by
	\begin{equation}\label{eq:NLE}
		\N(\vartheta,\varrho)
		=
		-\A\vartheta
		-|\vartheta|^{p-2}\vartheta
		-\g\varrho
		+
		\left(
		\|\nabla \vartheta \|_{L^2(\varOmega)}^2
		+\|\vartheta\|_{L^p(\varOmega)}^p
		-\|\varrho\|_{L^2(\varOmega)}^2
		\right)\vartheta.
	\end{equation}
	Since
$	D(\A)=H^2(\varOmega)\cap H_0^1(\varOmega),$
	elliptic regularity, together with the relevant Sobolev embeddings, ensures that each term in \eqref{eq:NLE} belongs to $L^2(\varOmega)$. Hence, the nonlinear operator $\N$ is well defined.
	\begin{lemma}\label{lem:LL}
		For the values of $p$ given in \eqref{values of p}, the mapping $\N:D(\A)\times L^2(\varOmega)\to L^2(\varOmega)$
		is locally Lipschitz continuous. More precisely, for every $r>0$, there exists
		a constant $C_r>0$ such that
		\begin{equation}\label{eq:LLP}
			\|\N(\vartheta_1,\varrho_1)-\N(\vartheta_2,\varrho_2)\|_{L^2(\varOmega)}
			\le
			C_r
			\left(
			\|\vartheta_1-\vartheta_2\|_{D(\A)}
			+\|\varrho_1-\varrho_2\|_{L^2(\varOmega)}
			\right),
		\end{equation}
		whenever, $\|(\vartheta_i,\varrho_i)\|_{D(\A)\times L^2}\le r,\  i=1,2.$
	\end{lemma}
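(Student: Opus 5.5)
We split $\N(\vartheta_1,\varrho_1)-\N(\vartheta_2,\varrho_2)$ into four groups of terms and estimate each in $L^2(\varOmega)$ separately. Throughout, $\|(\vartheta_i,\varrho_i)\|_{D(\A)\times L^2}\le r$.

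\emph{Linear terms.} The linear part gives at once
\[
\|\A(\vartheta_1-\vartheta_2)\|_{L^2(\varOmega)}+\g\|\varrho_1-\varrho_2\|_{L^2(\varOmega)}\le \|\vartheta_1-\vartheta_2\|_{D(\A)}+\g\|\varrho_1-\varrho_2\|_{L^2(\varOmega)}.
\]

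\emph{Local polynomial term.} We use the pointwise inequality
\[
\bigl||a|^{p-2}a-|b|^{p-2}b\bigr|\le (p-1)\bigl(|a|^{p-2}+|b|^{p-2}\bigr)|a-b|.
\]
H\"older's inequality then gives
\[
\||\vartheta_1|^{p-2}\vartheta_1-|\vartheta_2|^{p-2}\vartheta_2\|_{L^2(\varOmega)}\le C\bigl(\|\vartheta_1\|_{L^{q(p-2)}(\varOmega)}^{p-2}+\|\vartheta_2\|_{L^{q(p-2)}(\varOmega)}^{p-2}\bigr)\|\vartheta_1-\vartheta_2\|_{L^{s}(\varOmega)},
\qquad \tfrac1q+\tfrac1s=\tfrac12 .
\]
The constraint \eqref{values of p} enters only here. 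For $d\le3$ we have $H^2(\varOmega)\hookrightarrow L^\infty(\varOmega)$, so we take $q=\infty$, $s=2$. For $d=4$, $H^2(\varOmega)\hookrightarrow L^r(\varOmega)$ for every $r<\infty$, so any finite $q,s$ work. For $d\ge5$ we take $q=s=4$ and need
\[
H^2(\varOmega)\hookrightarrow L^{4(p-2)}(\varOmega)\quad\text{and}\quad H^2(\varOmega)\hookrightarrow L^4(\varOmega).
\]
The critical exponent is $\frac{2d}{d-4}$, so the first embedding requires $4(p-2)\le \frac{2d}{d-4}$.

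I expect this step to be the main obstacle. One must check that the admissible range matches \eqref{values of p}, i.e.\ $p\le\frac{2(d-2)}{d-4}$. Note that $\frac{2(d-2)}{d-4}=2+\frac{4}{d-4}$, so this means $p-2\le \frac{4}{d-4}$. The symmetric choice $q=s=4$ is probably too crude for this, so I would instead tune the split. I would take $s=\frac{2d}{d-4}$, which gives $\frac1q=\frac12-\frac{d-4}{2d}=\frac{2}{d}$, i.e.\ $q=\frac d2$. The requirement becomes $(p-2)\frac d2\le\frac{2d}{d-4}$, i.e.\ $p-2\le\frac{4}{d-4}$, which is exactly \eqref{values of p}. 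Since $\vartheta_i\in D(\A)$ with the norm equivalence $\|\cdot\|_{H^2}\simeq\|\A\cdot\|$ stated in the preliminaries, this yields a bound $C_r\|\vartheta_1-\vartheta_2\|_{D(\A)}$.

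\emph{Nonlocal multiplier terms.} Write $\lambda(\vartheta,\varrho)=\|\nabla\vartheta\|^2+\|\vartheta\|_{L^p}^p-\|\varrho\|^2$ and split
\[
\lambda(\vartheta_1,\varrho_1)\vartheta_1-\lambda(\vartheta_2,\varrho_2)\vartheta_2=\lambda(\vartheta_1,\varrho_1)(\vartheta_1-\vartheta_2)+\bigl(\lambda(\vartheta_1,\varrho_1)-\lambda(\vartheta_2,\varrho_2)\bigr)\vartheta_2 .
\]
The first summand is bounded in $L^2$ by $C_r\|\vartheta_1-\vartheta_2\|_{L^2}$, since $|\lambda|\le C_r$ on the ball via $H^2\hookrightarrow H^1\hookrightarrow L^p$. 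For $d\ge3$ the embedding $H^1\hookrightarrow L^p$ needs $p\le\frac{2d}{d-2}$, and \eqref{values of p} implies this: for $d\ge5$, $\frac{2(d-2)}{d-4}\le\frac{2d}{d-2}$ fails, but $H^2\hookrightarrow L^{2d/(d-4)}$ suffices anyway. For the difference of multipliers we use:
\begin{itemize}
\item $\bigl|\|\nabla\vartheta_1\|^2-\|\nabla\vartheta_2\|^2\bigr|\le(\|\nabla\vartheta_1\|+\|\nabla\vartheta_2\|)\|\nabla(\vartheta_1-\vartheta_2)\|$;
\item $\bigl|\|\varrho_1\|^2-\|\varrho_2\|^2\bigr|\le 2r\|\varrho_1-\varrho_2\|$;
\item $\bigl|\|\vartheta_1\|_{L^p}^p-\|\vartheta_2\|_{L^p}^p\bigr|\le p\int(|\vartheta_1|^{p-1}+|\vartheta_2|^{p-1})|\vartheta_1-\vartheta_2|\le C(\|\vartheta_1\|_{L^p}^{p-1}+\|\vartheta_2\|_{L^p}^{p-1})\|\vartheta_1-\vartheta_2\|_{L^p}$.
\end{itemize}
Each of these is controlled by $C_r\|\vartheta_1-\vartheta_2\|_{D(\A)}$ through $D(\A)\hookrightarrow H_0^1\cap L^p$. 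Multiplying by $\|\vartheta_2\|_{L^2}\le C r$ closes this group.

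Summing the three groups gives \eqref{eq:LLP}, with $C_r$ depending on $r,p,\g,d,\varOmega$.
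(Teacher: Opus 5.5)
Your argument is correct and is essentially the paper's proof. It uses the same split into the linear part, the local term handled by the pointwise mean-value bound plus H\"older and the embedding of $D(\A)$, and the multiplier term written as $\lambda_1(\vartheta_1-\vartheta_2)+(\lambda_1-\lambda_2)\vartheta_2$; your H\"older pair $(\tfrac d2,\tfrac{2d}{d-4})$ differs from the paper's choice of $L^{2(p-1)}$ for both factors but gives exactly the same threshold $p\le \tfrac{2(d-2)}{d-4}$. One thing to tidy: \eqref{values of p} does not imply $H^1\hookrightarrow L^p$, which already fails for large $p$ when $d=3,4$, so justify the $L^p$ bounds on the multiplier directly by $D(\A)\hookrightarrow L^p$, as you ultimately do.
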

	\begin{proof}
		Let $(\vartheta_1,\varrho_1),(\vartheta_2,\varrho_2)\in D(\A)\times L^2(\varOmega)$. Then
		\begin{align}
			&
			\|\N(\vartheta_1,\varrho_1)-\N(\vartheta_2,\varrho_2)\|_{L^2(\varOmega)}
			\nonumber\\
			&\le
			\|\A(\vartheta_1-\vartheta_2)\|_{L^2(\varOmega)}
			+\g\|\varrho_1-\varrho_2\|_{L^2(\varOmega)}
			+\||\vartheta_1|^{p-2}\vartheta_1-|\vartheta_2|^{p-2}\vartheta_2\|_{L^2(\varOmega)}
			\nonumber\\
			&\quad
			+
			\left|
			\|\nabla \vartheta_1\|_{L^2(\varOmega)}^{2}
			-\|\nabla \vartheta_2\|_{L^2(\varOmega)}^{2}
			+\|\vartheta_1\|_{L^p(\varOmega)}^{p}
			-\|\vartheta_2\|_{L^p(\varOmega)}^{p}
			-\|\varrho_1\|_{L^2(\varOmega)}^{2}
			+\|\varrho_2\|_{L^2(\varOmega)}^{2}
			\right|
			\|\vartheta_2\|_{L^2(\varOmega)}
			\nonumber\\
			&\quad
			+
			\left|
			\|\nabla \vartheta_1\|_{L^2(\varOmega)}^{2}
			+\|\vartheta_1\|_{L^p(\varOmega)}^{p}
			-\|\varrho_1\|_{L^2(\varOmega)}^{2}
			\right|
			\|\vartheta_1-\vartheta_2\|_{L^2(\varOmega)}.
			\label{eq:split}
		\end{align}
		Applying the elementary inequality
		\begin{equation*}
			|a^2-b^2|
			\leq (|a|+|b|)|a-b|,
		\end{equation*}
		along with
		\begin{equation}\label{identity 1}
			\bigl||x|^{p-2}x-|y|^{p-2}y\bigr|
			\leq C\bigl(|x|^{p-2}+|y|^{p-2}\bigr)|x-y|,
		\end{equation}
		H\"older's inequality, and the Sobolev embeddings \eqref{embedding}, we obtain
		\begin{align*}
		&	\|\N(\vartheta_1,\varrho_1)-\N(\vartheta_2,\varrho_2)\|_{L^2(\varOmega)}
		\nonumber\\	&\le
			(1+\g)
			\|(\vartheta_1,\varrho_1)-(\vartheta_2,\varrho_2)\|_{D(\A)\times L^2(\varOmega)}
		\\&\quad	+C
			\left(
			\|\vartheta_1\|_{L^{2(p-1)}(\varOmega)}^{p-2}
			+
			\|\vartheta_2\|_{L^{2(p-1)}(\varOmega)}^{p-2}
			\right)
			\|\vartheta_1-\vartheta_2\|_{L^{2(p-1)}(\varOmega)}
			\\
			&\quad
			+
			\left(
			\|\nabla \vartheta_1\|_{L^2(\varOmega)}
			+
			\|\nabla \vartheta_2\|_{L^2(\varOmega)}
			\right)
			\|\nabla(\vartheta_1-\vartheta_2)\|_{L^2(\varOmega)}
			\|\vartheta_2\|_{L^2(\varOmega)}
		\\&\quad	+
			C
			\left(
			\|\vartheta_1\|_{L^p(\varOmega)}^{p-1}
			+
			\|\vartheta_2\|_{L^p(\varOmega)}^{p-1}
			\right)
			\|\vartheta_1-\vartheta_2\|_{L^2(\varOmega)}\,
			\|\vartheta_2\|_{L^2(\varOmega)}
			\\
			&\quad
			+
			\left(
			\|\varrho_1\|_{L^2(\varOmega)}
			+
			\|\varrho_2\|_{L^2(\varOmega)}
			\right)
			\|\varrho_1-\varrho_2\|_{L^2(\varOmega)}
			\|\vartheta_2\|_{L^2(\varOmega)}
		\\&\quad	+
			\left(
			\|\nabla \vartheta_1\|_{L^2(\varOmega)}^{2}
			+
			\|\vartheta_1\|_{L^p(\varOmega)}^{p}
			+
			\|\varrho_1\|_{L^2(\varOmega)}^{2}
			\right)
			\|\vartheta_1-\vartheta_2\|_{L^2(\varOmega)}.
		\end{align*}
		Since $\|(\vartheta_i,\varrho_i)\|_{D(\A)\times L^2}\le r$, all norms involving
		$\vartheta_i$ and $\varrho_i$ are bounded by a constant depending only on $r$. Therefore,
		there exists a constant $C_r>0$ such that
		\[
		\|\N(\vartheta_1,\varrho_1)-\N(\vartheta_2,\varrho_2)\|_{L^2(\varOmega)}
		\le
		C_r
		\left(
		\|\vartheta_1-\vartheta_2\|_{D(\A)}
		+
		\|\varrho_1-\varrho_2\|_{L^2(\varOmega)}
		\right),
		\]
		which proves \eqref{eq:LLP}.
	\end{proof}
%	\begin{remark}
%		The operator $\N$ satisfies the local Lipschitz condition for the for the following range of $ p: $
%		\begin{equation*}
%			\left\{
%			\begin{aligned}
%				p\in[2,\infty), \qquad d=1,2,3,4,\\
%				2<p<\frac{2(d-2)}{d-4}, \qquad d\ge5.
%			\end{aligned}
%			\right.
%		\end{equation*}
%	\end{remark}	
	\section{Well-posedness}\label{WP}
	The goal of this section is to prove the existence and uniqueness of strong solutions of \eqref{eq:mains}--\eqref{MID}. The value of $p$ is fixed as in \eqref{eqn-restriction}. 
%		\begin{equation}\label{value of p-1}
%		p\in
%		\begin{cases}
%			[2,\infty), & d=1,2,\\
%			\left[2,\dfrac{2(d-1)}{d-2}\right], & d\geq3,
%		\end{cases}
%	\end{equation}
	The 
	analysis is based on the Faedo-Galerkin approximation scheme, in combination with several key functional analytic tools: the \emph{Banach-Alaoglu theorem} \cite[Theorem C.17]{JCR-20}, the \emph{Aubin--Lions compactness theorem} \cite{JS-87}, and the \emph{Strauss Lemma} \cite[Lemma 1.4, pg-178]{RT-01}. 
%	\subsection{Existence and uniqueness of strong solutions}
%	Let us now provide a proof of Theorem \ref{wdss} by 
	\begin{proof}[Proof of Theorem  \ref{wdss}]
		Let $\{\sigma_n\}_{n\in\mathbb{N}}$ be the nondecreasing sequence of positive eigenvalues of the operator $\A$, with $\sigma_n \to \infty$ as $n \to \infty$. Let $\{w_n\}_{n\in\mathbb{N}}$ denote an associated orthonormal basis of $L^2(\varOmega)$ consisting of eigenfunctions of $\A$, which is also orthogonal basis in $H_0^1(\varOmega)$.
		
		For $m \in \mathbb{N}$, set  $	V_m := \mathrm{span}\{w_1,\dots,w_m\} \subset L^2(\varOmega),$ 	and let $\mathrm{P}_m : L^2(\varOmega) \to V_m$ denote the $L^2$-orthogonal projection, given  by
		\begin{equation}\label{eq:pro}
			\mathrm{P}_m \vartheta = \sum_{i=1}^m (\vartheta,w_i)\, w_i, \  \vartheta \in L^2(\varOmega).
		\end{equation}	
		We seek a Galerkin approximation $\vartheta_m : [0,T] \to V_m$ of the form
		\begin{equation}\label{eq:projection}
			\vartheta_m(t) = \sum_{k=1}^m \xi_k^{(m)}(t)\, w_k,
		\end{equation}
		where $ \xi_k^{(m)}: [0,T] \to \R $, $1 \le k \le m$, are scalar coefficient smooth functions satisfying the following finite-dimensional system of ordinary diﬀerential equations:
		\begin{equation}\label{eq:NLE pro}
			\begin{cases}
				(\umm,w_k)= (-\A\vartheta_m,w_k) - (|\vartheta_m|^{p-2}\vartheta_m,w_k) - \g(\um,w_k)\\
				\qquad\qquad+ \left(-\norm{\um} _{L^2(\varOmega)}^{2}
				+ \norm{\nabla \vartheta_m} _{L^2(\varOmega)}^{2}
				+ \norm{\vartheta_m}_{L^p(\varOmega)}^{p}\right)(\vartheta_m,w_k),\quad\text{ in }\ \varOmega\times(0,T), \\
			\vartheta_m(0)=\vartheta_{0m}=\frac{\mathrm{P}_m\vartheta_0}{\norm{\mathrm{P}_m\vartheta_0} _{L^2(\varOmega)}},\\
				\um(0)=\vartheta_{1m}=\mathrm{P}_m\vartheta_1-(\mathrm{P}_m\vartheta_1,\vartheta_{0m})\vartheta_{0m},
			\end{cases}
		\end{equation}
		so that $\vartheta_{0m}\in\bM\ \text{and}\ (\vartheta_{0m},\vartheta_{1m})_{L^2(\varOmega)}=0$. The coeﬃcients $\xi_k^{(m)}(\cdot)$  is chosen in such a way that $\xi_k^{(m)}(0)= \frac{1}{\norm{\mathrm{P}_m\vartheta_0} _{L^2(\varOmega)}}\left(\vartheta_0,w_k\right)\ \text{and}\  \left(\xi_k^{(m)}\right)_t(0)=(\vartheta_1,w_k)-(\mathrm{P}_m\vartheta_1,\vartheta_{0m})(\vartheta_0,w_k).$
		Since $\vartheta_0\in D(\A)$, it admits the spectral representation
		\begin{equation}
			\vartheta_0=\sum_{n=1}^{\infty}(\vartheta_0,w_n) w_n,
		\end{equation}
		where $\{w_n\}_{n\ge1}$ satisfying $\mathcal{A} w_n=\sigma_n w_n,\ w_n|_{\partial\varOmega}=0.$
		Since $\vartheta_0\in D(\A)$, we have
		\begin{equation}
			\mathcal{A} \vartheta_0=\sum_{n=1}^{\infty}\sigma_n (\vartheta_0,w_n) w_n\in L^2(\varOmega),
		\end{equation}
		Therefore,
		$\|\vartheta_0-\mathrm{P}_m \vartheta_0\| _{L^2(\varOmega)}+\|\mathcal{A}(\vartheta_0-\mathrm{P}_m \vartheta_0)\| _{L^2(\varOmega)}
		\to 0,\ \text{ as }\ m\to\infty.$
		Since, $\mathrm{P}_m \vartheta_0\to \vartheta_0\ \text{ in }D(\A)\ \text{as}\ m\to\infty \ $ and 
		\begin{equation}\label{ normproj1}
			\lim_{m\to\infty}\|\mathrm{P}_m \vartheta_0\| _{L^2(\varOmega)}^2
			=
			\lim_{m\to\infty}
			\sum_{i=1}^{m}|(\vartheta_0,w_i)|^2
			=
			\sum_{i=1}^{\infty}|(\vartheta_0,w_i)|^2
			=
			\|\vartheta_0\| _{L^2(\varOmega)}^2
			=
			1,
		\end{equation} 
		we have
		\begin{equation}
			\|\vartheta_{0m}-\vartheta_0\|_{D(\A)}\leq\frac{\|\mathrm{P}_m \vartheta_0-\vartheta_0\|_{D(\A)}}{\|\mathrm{P}_m \vartheta_0\| _{L^2(\varOmega)}}
			+\|\vartheta_0\|_{D(\A)}\left|\frac{1}{\|\mathrm{P}_m \vartheta_0\| _{L^2(\varOmega)}}-1\right|\to 0,\ \text{ as }\ m\to\infty.					
		\end{equation}
		Therefore, 
		\begin{equation}\label{ineq:pro1}
			\vartheta_{0m}\to \vartheta_0\ \text{ in }\ D(\A)\cap\bM.
		\end{equation}
		It remains to approximate the initial datum $\vartheta_1$. In order to preserve the orthogonality condition, we have defined
		$\vartheta_{1m}=\mathrm{P}_m\vartheta_1-(\mathrm{P}_m\vartheta_1,\vartheta_{0m})\vartheta_{0m}$.
		Since $\mathrm{P}_m \vartheta_1\to \vartheta_1$ in $H_0^1(\varOmega)$ and
		$\vartheta_{0m}\to \vartheta_0$ in $H_0^1(\varOmega)$, we have
		\begin{equation}\label{ineq:pro2}
			\|\vartheta_{1m}-\vartheta_1\|_{H_0^1}\leq	\|\mathrm{P}_m \vartheta_1-\vartheta_1\|_{H_0^1}+|(\mathrm{P}_m \vartheta_1,\vartheta_{0m})|\,\|\vartheta_{0m}\|_{H_0^1}.
		\end{equation}
		Moreover,
		\begin{equation}
			(\mathrm{P}_m \vartheta_1,\vartheta_{0m})=(\mathrm{P}_m \vartheta_1-\vartheta_1,\vartheta_{0m})+(\vartheta_1,\vartheta_{0m}-\vartheta_0)+(\vartheta_1,\vartheta_0).
		\end{equation}
		Since $(\vartheta_1,\vartheta_0)=0$, $\mathrm{P}_m \vartheta_1\to \vartheta_1$ in $L^2(\varOmega)$ and
		$\vartheta_{0m}\to \vartheta_0$ in $L^2(\varOmega)$, it follows that
		\begin{equation}\label{ineq:pro3}
			(\mathrm{P}_m \vartheta_1,\vartheta_{0m})\to0 \ \text{ and }\  	\vartheta_{1m}\to \vartheta_1	\ \text{ in }\ H_0^1(\varOmega).
		\end{equation}
		
		The local existence and uniqueness of solutions to the system of ordinary
		differential equations in \eqref{eq:NLE pro} follow from the
		\emph{Picard-Lindel\"of theorem} and the local Lipschitz continuity of the
		associated nonlinear operator in \eqref{eq:LLP}. Hence, there exists a
		unique local solution
		$\vartheta_m\in C^2([0,\widetilde{T}_m];V_m)$
		for some \(0<\widetilde{T}_m<T\). The uniform energy estimates established below
		allow us to extend this local solution to the whole interval \([0,T]\).
		Furthermore, the analysis carried out below shows that
		$\vartheta_m(t)\in\mathcal{M}, \text{ for every }\ t\in[0,T].$
		\vskip 0.1 cm
		\noindent 
		\textbf{Step 1:} \textit{$\vartheta_m(t)\in\bM,$ for all $t\in [0,T]$}. Multiplying \eqref{eq:NLE pro} by $\xi_k^{(m)}(\cdot)$ and summing over $1\le k\le n$, we obtain
			\begin{align}
				(\umm,\vartheta_m)&=-(\A\vartheta_m,\vartheta_m) -\g(\um,\vartheta_m)-(|\vartheta_m|^{p-2}\vartheta_m,\vartheta_m)\\
				&\quad+\left(-\norm{\um} _{L^2(\varOmega)}^{2}+ \norm{\nabla \vartheta_m}^{2}	 _{L^2(\varOmega)}+ \norm{\vartheta_m}_{L^p(\varOmega)}^{p}\right)(\vartheta_m,\vartheta_m),\label{inner prod um}
			\end{align}
		for a.e. $t\in (0,T)$. Therefore, we infer 
		\begin{align}
				\frac{1}{2}\frac{d^2}{dt^2}\|\vartheta_m\|^2 _{L^2(\varOmega)}-\|\um\|^2 _{L^2(\varOmega)}&= -\|\nabla \vartheta_m\|^2 _{L^2(\varOmega)}-\frac{1}{2}\frac{d}{dt}\|\vartheta_m\|^2 _{L^2(\varOmega)}-\|\vartheta_m\|^p_{L^p(\varOmega)}\\
				&\quad+\left(\norm{\nabla \vartheta_m} _{L^2(\varOmega)}^{2}	+ \norm{\vartheta_m}_{L^p(\varOmega)}^{p}-\|\um\|^2 _{L^2(\varOmega)}\right)\|\vartheta_m\|^2 _{L^2(\varOmega)},
		\end{align}
		which can be rewritten as
		\begin{equation*}
			\begin{aligned}
			&\frac{1}{2}\frac{d^2}{dt^2}(\|\vartheta_m\|^2_{L^2(\varOmega)}-1)+\frac{1}{2}\frac{d}{dt}(\|\vartheta_m\|^2 _{L^2(\varOmega)}-1)\\
			&= \left(\norm{\nabla \vartheta_m} _{L^2(\varOmega)}^{2}+\hspace{-0.1cm} \norm{\vartheta_m}_{L^p(\varOmega)}^{p}-\|\um\|^2 _{L^2(\varOmega)}\right)(\|\vartheta_m\|^2 _{L^2(\varOmega)}-1),
				\end{aligned}
		\end{equation*}
	for a.e. $t\in[0,T]$.	Now, define
		\begin{equation}\label{def al la}
			\alpha_m(t)= \|\vartheta_m(t)\| ^2-1\ \text{ and }\
			\beta_m(t)= \left( \norm{\vartheta_m}_{L^p(\varOmega)}^{p}-\|\um\|^2 _{L^2(\varOmega)}+\|\nabla \vartheta_m\|^2 _{L^2(\varOmega)}\right).
		\end{equation}
		Then, the above identity together with the initial conditions in \eqref{eq:NLE pro} yields the following second-order ordinary differential equation:
		\begin{equation}\label{ode}
			\left\{
			\begin{aligned}
				& (\alpha_m)_{tt}(t)+(\alpha_m)_t(t)=2\beta_m(t)\alpha_m(t),\qquad t\in (0,T),\\
				& \alpha_m(0)=0,\\
				& (\alpha_m)_t(0)=0.
			\end{aligned}
			\right. 
		\end{equation}
		Since,  $\beta_m\in C([0,T])$, then, by the classical existence and uniqueness theorem for ordinary differential equations, the above initial value problem admits a unique solution. Since $\alpha_m\equiv0$ is a solution satisfying the prescribed initial conditions, it follows by uniqueness that
		\begin{equation*}
			\alpha_m(t)=0,\ \text{ for all}\ t\in[0,T].
		\end{equation*}
		Therefore,
		\begin{equation}\label{norm1}
			\|\vartheta_m(t)\|_{L^2(\varOmega)}^2=1,\  \text{ for all }\  t\in[0,T].
		\end{equation}
		Hence, $\vartheta_m(t)\in \bM, \text{ for all }  t\in[0,T].$
		This completes the proof of the invariance of $\bM$ under the approximate flow.
		Since \(V_m\) is finite-dimensional, all norms on \(V_m\) are equivalent. Therefore, the above estimate yields the existence of a solution \(\vartheta_m\) on a maximal interval \([0,\widetilde{T}_m)\), where \(0<\widetilde{T}_m< T\). The \emph{a priori} estimates established below are uniform with respect to \(m\), which exclude finite-time blow-up and consequently imply that \(\widetilde{T}_m=T\). Hence, the approximate solution \(\vartheta_m\) is well defined on the entire interval \([0,T]\).
		Moreover, since \(\vartheta_m(t)\in \bM\), for all \(t\in[0,T]\), it follows that
		\begin{equation}\label{product0}
			\frac{1}{2}\frac{d}{dt}\|\vartheta_m(t)\| _{L^2(\varOmega)}^{2}	=(\um(t),\vartheta_m(t))=0,	\quad \text{for a.e. } t\in[0,T].
		\end{equation}
		
		\vskip 0.1 cm
		\noindent 
		\textbf{Step 2:} \textit{Uniform energy estimates for $\vartheta_m(\cdot)$.} Multiplying \eqref{eq:NLE pro} by $\left(\xi_k^{(m)}\right)_t(\cdot)$ and summing over $k=1,\ldots,m$, we obtain
		\begin{equation}
			(\umm,\um)	+(\A\vartheta_m,\um)	+\g (\um,\um)	+(|\vartheta_m|^{p-2}\vartheta_m,\um)=\beta_m (\vartheta_m,\um),
		\end{equation}
		for a.e. \(t\in[0,T]\) where $\beta_m$ is defined in \eqref{def al la}.  Using the relation \eqref{product0} in above equation, we deduce  
		\begin{equation}
			\frac{1}{2}\frac{d}{dt}\|\um\| _{L^2(\varOmega)}^{2}+\frac{1}{2}\frac{d}{dt}\|\nabla \vartheta_m\| _{L^2(\varOmega)}^{2}+\g \|\um\| _{L^2(\varOmega)}^{2}+\frac{1}{p}\frac{d}{dt}\|\vartheta_m\|_{L^p(\varOmega)}^{p}=0,
		\end{equation}
		for a.e. \(t\in[0,T]\). Consequently,
		\begin{equation}
			\frac{d}{dt}\left(\frac{1}{2}\|\um\| _{L^2(\varOmega)}^{2}+\frac{1}{2}\|\nabla \vartheta_m\| _{L^2(\varOmega)}^{2}	+\frac{1}{p}\|\vartheta_m\|_{L^p(\varOmega)}^{p}\right)+\g \|\um\| _{L^2(\varOmega)}^2=0,	\  \text{ a.e. } t\in[0,T].
		\end{equation}
			By integrating the preceding equality over the interval $(0,t)$ and invoking \eqref{embedding}, \eqref{ normproj1} and \eqref{ineq:pro2}, we infer 
				\begin{align}
					&
					\left(
					\frac{1}{2}\|\um(t)\| _{L^2(\varOmega)}^{2}
					+\frac{1}{2}\|\nabla \vartheta_m(t)\| _{L^2(\varOmega)}^{2}
					+\frac{1}{p}\|\vartheta_m(t)\|_{L^p(\varOmega)}^{p}
					\right)
					+\g\int_{0}^{t}\|\um(s)\| _{L^2(\varOmega)}^{2}  ds \\
					&\qquad=
					\left(
					\frac{1}{2}\|\um(0)\| _{L^2(\varOmega)}^{2}
					+\frac{1}{2}\|\nabla \vartheta_m(0)\| _{L^2(\varOmega)}^{2}
					+\frac{1}{p}\|\vartheta_m(0)\|_{L^p(\varOmega)}^{p}
					\right) \\
					&\qquad\le
					C\left(
					\|\vartheta_1\| _{L^2(\varOmega)}^{2}
					+\|\vartheta_0\|_{H_0^1(\varOmega)}^{2}+\|\vartheta_0\|_{H_0^1(\varOmega)}^{p}
					\right),\label{energy bound 1}
				\end{align}
			where $C>0$ denotes a constant independent of $m$.
			It follows immediately from \eqref{energy bound 1} that the right-hand side is independent of $m$. Consequently, we obtain the following uniform estimate: 
				\begin{align}
					&
					\sup_{t\in[0,T]}
					\left(
					\frac{1}{2}\|\um(t)\| _{L^2(\varOmega)}^{2}
					+\frac{1}{2}\|\nabla \vartheta_m(t)\| _{L^2(\varOmega)}^{2}
					+\frac{1}{p}\|\vartheta_m(t)\|_{L^p(\varOmega)}^{p}
					\right)
					+\g\int_{0}^{T}\|\um(s)\| _{L^2(\varOmega)}^{2}  ds \\
					&\qquad\le
					C\left(
					\|\vartheta_1\| _{L^2(\varOmega)}^{2}
					+\|\vartheta_0\|_{H_0^1(\varOmega)}^{2}+\|\vartheta_0\|_{H_0^1(\varOmega)}^{p}
					\right).\label{energy bound 2}
				\end{align}
			In particular, estimate \eqref{energy bound 2} implies the uniformly boundedness of $\beta_m(\cdot)$ through its definition in \eqref{def al la}.
			
			\begin{remark}
				Observe that the constant appearing on the right-hand side of \eqref{energy bound 2} is independent of the final time $T$. Therefore, the estimate extends to arbitrary time intervals, and hence
			
					\begin{align}
						&
						\sup_{t\ge 0}
						\left(
						\frac{1}{2}\|\um(t)\| _{L^2(\varOmega)}^{2}
						+\frac{1}{2}\|\nabla \vartheta_m(t)\| _{L^2(\varOmega)}^{2}
						+\frac{1}{p}\|\vartheta_m(t)\|_{L^p(\varOmega)}^{p}
						\right)
						+\g\int_{0}^{\infty}\|\um(s)\| _{L^2(\varOmega)}^{2}  ds \\
						&\qquad\le
						C\left(
						\|\vartheta_1\| _{L^2(\varOmega)}^{2}
						+\|\vartheta_0\|_{H_0^1(\varOmega)}^{2}+\|\vartheta_0\|_{H_0^1(\varOmega)}^{p}
						\right).\label{energy bound 3}
					\end{align}
			
			\end{remark}

				\vskip 0.1 cm
			\noindent 
				\textbf{Step 3.}  \textit{Higher order estimates.} Multiplying \eqref{eq:NLE pro} with $\sigma_k (\xi_k^{(m)})_t(\cdot)$ and summing over $k=1,\ldots,m$, we find
			\begin{equation}
				\begin{aligned}
					(\umm,\A\um)&=-(\A\vartheta_m,\A\um)-(|\vartheta_m|^{p-2}\vartheta_m,\A\um)-\g(\um,\A\um)\\	&\qquad+\beta_m(t)(\vartheta_m,\A\um).
				\end{aligned}
			\end{equation}
			An application of integration by parts yields 
				\begin{align}
					&\frac{1}{2}\frac{d}{dt}\|\nabla\um\| _{L^2(\varOmega)}^{2}+\frac{1}{2}\frac{d}{dt}\|\A\vartheta_m\| _{L^2(\varOmega)}^{2}
					+\g\|\nabla\um\| _{L^2(\varOmega)}^{2}\nonumber\\&=-(|\vartheta_m|^{p-2}\vartheta_m,\A\um)
					 +\beta_m(t)(\vartheta_m,\A\um),
				\end{align}
		for a.e. $t\in[0,T]$. 	To estimate the nonlinear term, we integrate by parts and use the chain rule to get
			\begin{equation}
				\begin{aligned}
					(|\vartheta_m|^{p-2}\vartheta_m,\A\um)&=(\nabla(|\vartheta_m|^{p-2}\vartheta_m),\nabla\um) =(p-1)(|\vartheta_m|^{p-2}\nabla \vartheta_m,\nabla\um).		
				\end{aligned}
			\end{equation}
			Applying H\"older's inequality, we deduce  
			\begin{equation}
				(|\vartheta_m|^{p-2}\nabla \vartheta_m,\nabla\um)\le\|\vartheta_m\|_{L^{d(p-2)}(\varOmega)}^{p-2}\|\nabla \vartheta_m\|_{L^\frac{2d}{(d-2)}(\varOmega)}\|\nabla\um\| _{L^2(\varOmega)}.
			\end{equation}
			Furthermore, by the Cauchy-Schwarz inequality,
			\begin{equation}
				\beta_m (\vartheta_m,\A\um)\le|\beta_m |\,\|\nabla \vartheta_m\| _{L^2(\varOmega)}\,\|\nabla\um\| _{L^2(\varOmega)}.
			\end{equation}
			Combining the above estimates, we arrive at
			\begin{equation}
				\begin{aligned}
					&\frac{1}{2}\frac{d}{dt}\|\nabla\um\| _{L^2(\varOmega)}^{2}+\frac{1}{2}\frac{d}{dt}\|\A\vartheta_m\| _{L^2(\varOmega)}^{2}
					+\g\|\nabla\um\| _{L^2(\varOmega)}^{2}\\
					&\le(p-1)\|\vartheta_m\|_{L^{d(p-2)}(\varOmega)}^{p-2}\|\nabla \vartheta_m\|_{L^\frac{2d}{(d-2)}(\varOmega)}	\|\nabla\um\| _{L^2(\varOmega)}+|\beta_m |\,\|\nabla \vartheta_m\| _{L^2(\varOmega)}\,\|\nabla\um\| _{L^2(\varOmega)}.
				\end{aligned}
			\end{equation}
			For the values of $p$ given in \eqref{eqn-restriction},  the Sobolev embedding yields (see \eqref{embedding}), $\|\vartheta_m\|_{L^{d(p-2)}(\varOmega)}\leq C\|\nabla \vartheta_m\|_{L^2(\varOmega)}$. 
			Using \eqref{energy bound 2}, we get  uniform boundedness of $\beta_m$, and using the above Sobolev embedding, we infer 
			\begin{equation}
				\begin{aligned}
					&\frac{1}{2}\frac{d}{dt}\|\nabla\um\| _{L^2(\varOmega)}^{2}+\frac{1}{2}\frac{d}{dt}\|\A\vartheta_m\| _{L^2(\varOmega)}^{2}
					+\g\|\nabla\um\| _{L^2(\varOmega)}^{2}\\
					&\le K_1\|\nabla \vartheta_m\| _{L^2(\varOmega)}^{p-2}\|\nabla \vartheta_m\|_{L^\frac{2d}{(d-2)}}
					\|\nabla\um\| _{L^2(\varOmega)}+K_2\|\nabla \vartheta_m\| _{L^2(\varOmega)}\|\nabla\um\| _{L^2(\varOmega)},
				\end{aligned}
			\end{equation}
			for some positive constants $K_1$ and $K_2$ independent of $m$.
			Furthermore, by virtue of \eqref{energy bound 2}, the Poincar\'e and Young's inequalities, we observe 
			\begin{equation}
				\begin{aligned}
					&K_1\|\nabla \vartheta_m\| _{L^2(\varOmega)}^{p-2}	\|\nabla \vartheta_m\|_{L^\frac{2d}{(d-2)}(\varOmega)}\|\nabla\um\| _{L^2(\varOmega)}+K_2\|\nabla \vartheta_m\| _{L^2(\varOmega)}
					\|\nabla\um\| _{L^2(\varOmega)} \\
					&\qquad\leq K_3\left(\|\A\vartheta_m\| _{L^2(\varOmega)}^{2}+\|\nabla\um\| _{L^2(\varOmega)}^{2}\right),
				\end{aligned}
			\end{equation}
			where $K_3>0$ is a constant independent of $m$. Consequently,
			\begin{equation}\label{norm bound 1}
				\frac{1}{2}\frac{d}{dt}\left(\|\nabla\um\| _{L^2(\varOmega)}^{2}+\|\A\vartheta_m\| _{L^2(\varOmega)}^{2}  \right)\
				+\g\|\nabla\um\| _{L^2(\varOmega)}^{2}\leq K_3\left(\|\A\vartheta_m\| _{L^2(\varOmega)}^{2} + \|\nabla\um\| _{L^2(\varOmega)}^{2}\right).
			\end{equation}
%			Thus,
%			\begin{equation}\label{norm bound 2}
%				\frac{1}{2}\frac{d}{dt}\left(\|\nabla \vartheta_m\| _{L^2(\varOmega)}^{2}+\|\Delta\vartheta_m\| _{L^2(\varOmega)}^{2}\right)
%				\le K_3\left(\|\nabla \vartheta_m\| _{L^2(\varOmega)}^{2}+\|\Delta\vartheta_m\| _{L^2(\varOmega)}^{2}\right).
%			\end{equation}
			An application of Gr\"onwall’s inequality to \eqref{norm bound 1} gives, for all $t\in0,T],$
			\begin{equation}\label{gronwall bound}
				\|\nabla\um(t)\| _{L^2(\varOmega)}^{2}	+\|\A\vartheta_m(t)\| _{L^2(\varOmega)}^{2}\le	\left(	\|\nabla \um(0)\| _{L^2(\varOmega)}^{2}
				+\|\A\vartheta_m(0)\| _{L^2(\varOmega)}^{2}\right)\exp{(2K_3T)}.
			\end{equation}
			Integrating \eqref{norm bound 1} over $(0,t)$ and using \eqref{gronwall bound}, \eqref{ineq:pro1},and \eqref{ineq:pro3}, we obtain
				\begin{align}
					&\|\nabla \um(t)\| _{L^2(\varOmega)}^{2}+\|\A\vartheta_m(t)\| _{L^2(\varOmega)}^{2}+2\g\int_{0}^{t}\|\nabla \um(s)\| _{L^2(\varOmega)}^{2}  ds \\
					&\qquad \le	2K_3\int_{0}^{t}\left(\|\nabla \um(s)\| _{L^2(\varOmega)}^{2}+\|\A\vartheta_m(s)\| _{L^2(\varOmega)}^{2}\right)  ds \\
					&\qquad \le2K_3\left(\|\nabla \um(0)\| _{L^2(\varOmega)}^{2}+\|\A\vartheta_m(0)\| _{L^2(\varOmega)}^{2}\right)\int_{0}^{t} \exp^{2K_3T}  ds \\
					&\qquad \le K_4	\left(\|\vartheta_1\| _{H^1_0(\varOmega)}^{2}	+\|\vartheta_0\|_{D(\A)}^{2}\right)
					T\exp^{2K_3T},\label{norm bound 3}
				\end{align}
			where $K_4>0$ is a constant independent of $m$.

\vskip 0.1 cm
\noindent 
			\textbf{Step 4:} \textit{Passing to the limit as $m\to\infty$.}
			By the uniform estimates \eqref{energy bound 2} and \eqref{norm bound 3}, the sequences
			$\{\vartheta_m\}_{m=1}^\infty$, $\{\um\}_{m=1}^\infty$ and $\{\umm\}_{m=1}^\infty$ are uniformly bounded in the corresponding functional spaces. Therefore, by the \emph{Banach-Alaoglu} theorem, there exist a subsequence, still denoted by $\{\vartheta_m\}$, and a function $\vartheta$ such that
			\begin{equation}\label{passing limit 1}
				\left\{
				\begin{aligned}
					\vartheta_m &\overset{w^*}{\to} \vartheta
					&&\text{ in }L^\infty(0,T;D(\A))
					\cap L^\infty(0,\infty;H_0^1(\varOmega)),\\[1mm]
					(\vartheta_m)_t &\overset{w^*}{\to} \vartheta_t
					&&\text{ in }L^\infty(0,T;H_0^1(\varOmega))
					\cap L^\infty(0,\infty;L^2(\varOmega)),\\[1mm]
					(\vartheta_m)_t &\overset{w}{\to} \vartheta_t
					&&\text{ in }L^2(0,T;H_0^1(\varOmega))
					\cap L^2(0,\infty;L^2(\varOmega)).
				\end{aligned}
				\right.
			\end{equation}
			Since $H_0^1(\varOmega)\hookrightarrow H^{-1}(\varOmega) \text{ and }  L^2(\varOmega)\hookrightarrow H^{-1}(\varOmega)$, it follows from \eqref{passing limit 1} that $\{\vartheta_m\}_{m=1}^\infty$ and $\{\um\}_{m=1}^\infty$ are uniformly bounded in $L^\infty(0,\infty;H^{-1}(\varOmega))$. Consequently, by \eqref{inner prod um}, the sequence $\{\umm\}_{m=1}^\infty$ is uniformly bounded in $L^\infty(0,\infty;H^{-1}(\varOmega))$. Hence, by the \emph{Banach-Alaoglu} theorem, there exists a subsequence, still denoted by $\{\umm\}$, such that
			\begin{equation}\label{passing limit 2}
				\left\{
				\begin{aligned}
					&\umm \overset{w^*}{\to} \vartheta_{tt}&&  \text{ in } L^\infty(0,\infty;H^{-1}(\varOmega)),\\
					&\umm \overset{w^*}{\to} \vartheta_{tt}&&  \text{ in } L^\infty(0,T;L^2(\varOmega)).			
				\end{aligned}
				\right.
			\end{equation}
			Furthermore, by the \emph{Aubin--Lions compactness theorem}, we obtain %Lions-Mangenes theorem, we obtain
			\begin{equation}\label{cont limit}
					\vartheta_m \to \vartheta    \  \text{ in } \ C([0,T];H_0^1(\varOmega))\ \text{ and }\ 
					\um \to \vartheta_t  \  \text{ in } \ C([0,T];L^2(\varOmega)).
			\end{equation}
			Invoking the \emph{Strauss Lemma},  we get 
			\begin{align}
			\vartheta\in 	C_w([0,T];D(\A))\ \text{ and }\ \vartheta_t\in C_w([0,T];H^1_0(\varOmega)).
			\end{align}
			
			We now pass to the limit in the Galerkin system \eqref{eq:NLE pro}. 
%			\begin{equation}\label{eq:NLE pro}
%				\left\{
%				\begin{aligned}
%					(\umm,w_k)&=(\Delta\vartheta_m,w_k)-(|\vartheta_m|^{p-2}\vartheta_m,w_k)-\g(\um,w_k)\\
%					&\quad+(-\|\um\| _{L^2(\varOmega)}^{2}+\|\nabla \vartheta_m\| _{L^2(\varOmega)}^{2}+\|\vartheta_m\|_{L^p(\varOmega)}^{p})(\vartheta_m,w_k),\qquad t\in(0,T),\\
%					\vartheta_m(0)&=\vartheta_{0m}=\frac{\mathrm{P}_m \vartheta_0}{\|\mathrm{P}_m \vartheta_0\| _{L^2(\varOmega)}},\\
%					\um(0)&=\vartheta_{1m}=\mathrm{P}_m \vartheta_1-(\mathrm{P}_m \vartheta_1,\vartheta_{0m})\vartheta_{0m}.
%				\end{aligned}
%				\right.
%			\end{equation}
			Now, by virtue of \eqref{passing limit 1}, \eqref{passing limit 2} and \eqref{cont limit}, we obtain
			\begin{equation}\label{non linear con 0}
				\left\{
				\begin{aligned}
					(\umm,w_k) &\to (\vartheta_{tt},w_k) \  &&\text{ as } m\to\infty,\\
					(\A\vartheta_m,w_k) &\to (\A\vartheta,w_k) \ &&\text{ as } m\to\infty,\\
					(\g(\um),w_k) &\to (\g(\vartheta_t),w_k) \ &&\text{ as } m\to\infty.
				\end{aligned}
				\right.
			\end{equation}
			To pass to the limit in the nonlinear source term appearing in \eqref{eq:NLE pro}, we first establish the convergence of
			\begin{equation}
				|\vartheta_m|^{p-2}\vartheta_m \to |\vartheta|^{p-2}\vartheta
				\quad \text{in } L^2(\varOmega).
			\end{equation}
			Define, 
				$F(s)=|s|^{p-2}s.$ 
			By the mean value theorem, for almost every $x\in\varOmega$ and $t\in(0,T)$, we have
			\begin{equation}
				\begin{aligned}
					\bigl||\vartheta_m|^{p-2}\vartheta_m-|\vartheta|^{p-2}\vartheta\bigr|&=|F(\vartheta_m)-F(\vartheta)| \\
					&=\left|\int_0^1 \frac{d}{d\theta}F\bigl(\theta \vartheta_m+(1-\theta)\vartheta\bigr) d \theta\right| \\
					&=\left|\int_0^1 F'\bigl(\theta \vartheta_m+(1-\theta)\vartheta\bigr)(\vartheta_m-\vartheta) d \theta\right| \\
					&\le (p-1)|\vartheta_m-\vartheta|\int_0^1\bigl|\theta \vartheta_m+(1-\theta)\vartheta\bigr|^{p-2} d \theta \\
					&\le (p-1)|\vartheta_m-\vartheta|\bigl(|\vartheta_m|+|\vartheta|\bigr)^{p-2}.\vspace{-5.3cm}
				\end{aligned}
			\end{equation}
			Consequently,
			\begin{equation}\label{non linear con 1}
				\bigl\||\vartheta_m|^{p-2}\vartheta_m-|\vartheta|^{p-2}\vartheta\bigr\|_{L^2(\varOmega)}^2\le K_5\bigl\||\vartheta_m-u|(|\vartheta_m|+|u|)^{p-2}
				\bigr\|_{L^2(\varOmega)}^2,
			\end{equation}
			where $K_5>0$ is some constant  independent of $m$. Applying H\"older's inequality with conjugate exponents $\frac{d}{(d-2)}$ and $\frac{d}{2}$ for $ d\geq3$, we obtain
			\begin{equation}
				\bigl\||\vartheta_m|^{p-2}\vartheta_m-|\vartheta|^{p-2}\vartheta\bigr\|_{L^2(\varOmega)}^2\le K_6\|\vartheta_m-\vartheta\|_{L^\frac{2d}{(d-2)}(\varOmega)}^2
				\left(\|\vartheta_m\|_{L^{d(p-2)}(\varOmega)}^{2(p-2)}+\|\vartheta\|_{L^{d(p-2)}(\varOmega)}^{2(p-2)}\right),
			\end{equation}
			where $K_6>0$ is some constant  independent of $m$.
			Since $d\geq 3$ and $2\leq p\leq \frac{2(d-1)}{d-2}$, we have
			$d(p-2)\leq \frac{2d}{d-2}$. Combining this observation with
			\eqref{embedding}, \eqref{energy bound 2}, and \eqref{cont limit} in
			\eqref{non linear con 1}, we deduce 
			\begin{equation}\label{non linear con 1*}
				\bigl\||\vartheta_m|^{p-2}\vartheta_m-|\vartheta|^{p-2}\vartheta\bigr\|_{L^2(\varOmega)}^2\le K_7\|\vartheta_m-\vartheta\|_{H_0^1(\varOmega)}^2\to 0
				\  \text{ as } \ m\to\infty.
			\end{equation}
			where $K_7>0$ is some constant independent of $m$. For
			$d=1,2$, the above result follows trivially. Therefore,
			\begin{equation}\label{non linear con 2}
				\int_{0}^{T}\bigl\||\vartheta_m|^{p-2}\vartheta_m-|\vartheta|^{p-2}\vartheta\bigr\|_{L^2(\varOmega)}^2\le K_7T\|\vartheta_m-\vartheta\|_{L^{\infty}(0,T;H_0^1(\varOmega))}^2\to 0\  \text{ as } \ m\to\infty.
			\end{equation}
			Now, by \eqref{norm1}, \eqref{energy bound 2}, and \eqref{cont limit}, we obtain
			\begin{align}
					&\int_{0}^{T}\Bigl\|\|\um(t)\|_{L^2(\varOmega)}^{2}\vartheta_m(t)-\|\vartheta_t(t)\|_{L^2(\varOmega)}^{2}\vartheta (t)\Bigr\|_{L^2(\varOmega)}^{2}  dt \\
					&=\int_{0}^{T}\Bigl\|\bigl(\|\um(t)\|_{L^2(\varOmega)}^{2}-\|\vartheta_t(t)\|_{L^2(\varOmega)}^{2}\bigr)\vartheta_m(t)+\|\vartheta_t(t)\|_{L^2(\varOmega)}^{2}\bigl(\um(t)-\vartheta (t)\bigr)\Bigr\|_{L^2(\varOmega)}^{2}  dt \\
					&\leq2\int_{0}^{T}\Bigl\|\bigl(\|\um(t)\|_{L^2(\varOmega)}^{2}-\|\vartheta_t(t)\|_{L^2(\varOmega)}^{2}\bigr)\vartheta_m(t)\Bigr\|_{L^2(\varOmega)}^{2}  dt \\
					&\qquad+2\int_{0}^{T}\Bigl\|\|\vartheta_t(t)\|_{L^2(\varOmega)}^{2}\bigl(\vartheta_m(t)-\vartheta (t)\bigr)\Bigr\|_{L^2(\varOmega)}^{2}  dt \\
					&\leq2\sup_{t\in[0,T]}\bigl(\|\um(t)\|_{L^2(\varOmega)}+\|\vartheta_t(t)\|_{L^2(\varOmega)}\bigr)^2\int_{0}^{T}\|\um(t)-\vartheta_t(t)\|^2_{L^2(\varOmega)}\,\|\vartheta_m(t)\|_{L^2(\varOmega)}^{2}  dt \\
					&\qquad+2\sup_{t\in[0,T]}\|\vartheta_t(t)\|_{L^2(\varOmega)}^{2}\int_{0}^{T}\|\vartheta_m(t)-\vartheta (t)\|_{L^2(\varOmega)}^{2}  dt \\
					&\leq2\sup_{t\in[0,T]}\bigl(\|\um(t)\|_{L^2(\varOmega)}+\|\vartheta_t(t)\|_{L^2(\varOmega)}\bigr)^2\|\um-\vartheta_t\|^2_{L^2(0,T;L^2(\varOmega))} \\
					&\qquad+2\big(\sup_{t\in[0,T]}	\|\vartheta_t(t)\|_{L^2(\varOmega)}^{2}\big)\|\vartheta_m-\vartheta\|^2_{L^2(0,T,L^2(\varOmega))}\\
					& \to 0 \ \text{ as }\ m\to\infty.\label{non linear con 3}
				\end{align}
			Similarly, by \eqref{norm1}, \eqref{energy bound 2}, and \eqref{cont limit}, we have
			\begin{align}
					&\int_{0}^{T}\Bigl\|\|\nabla \vartheta_m(t)\|_{L^2(\varOmega)}^{2}\vartheta_m(t)-\|\nabla \vartheta (t)\|_{L^2(\varOmega)}^{2}\vartheta (t)\Bigr\|_{L^2(\varOmega)}^{2}  dt \\
					&=\int_{0}^{T}\Bigl\|\bigl(\|\nabla \vartheta_m(t)\|_{L^2(\varOmega)}^{2}-\|\nabla \vartheta (t)\|_{L^2(\varOmega)}^{2}\bigr)\vartheta_m(t)
					+\|\nabla \vartheta (t)\|_{L^2(\varOmega)}^{2}\bigl(\vartheta_m(t)-\vartheta (t)\bigr)\Bigr\|_{L^2(\varOmega)}^{2}  dt \\
					&\leq2\int_{0}^{T}\Bigl\|\bigl(\|\nabla \vartheta_m(t)\|_{L^2(\varOmega)}^{2}-\|\nabla \vartheta (t)\|_{L^2(\varOmega)}^{2}\bigr)\vartheta_m(t)
					\Bigr\|_{L^2(\varOmega)}^{2}  dt \\
					&\qquad+2\int_{0}^{T}\Bigl\|\|\nabla \vartheta (t)\|_{L^2(\varOmega)}^{2}\bigl(\vartheta_m(t)-\vartheta (t)\bigr)\Bigr\|_{L^2(\varOmega)}^{2}  dt \\
					&\leq2\sup_{t\in[0,T]}\bigl(\|\nabla \vartheta_m(t)\|_{L^2(\varOmega)}+\|\nabla \vartheta (t)\|_{L^2(\varOmega)}\bigr)^2	\int_{0}^{T}
					\|\nabla \vartheta_m(t)-\nabla \vartheta (t)\|^2_{L^2(\varOmega)}\|\vartheta_m(t)\|_{L^2(\varOmega)}^{2}  dt \\
					&\qquad+2\sup_{t\in[0,T]}\|\nabla \vartheta (t)\|_{L^2(\varOmega)}^{2}\int_{0}^{T}\|\vartheta_m(t)-\vartheta (t)\|_{L^2(\varOmega)}^{2}  dt \\
					&\leq2\sup_{t\in[0,T]}\bigl(\|\nabla \vartheta_m(t)\|_{L^2(\varOmega)}+\|\nabla \vartheta (t)\|_{L^2(\varOmega)}\bigr)^2\|\vartheta_m-\vartheta\|^2_{L^2(0,T;H_0^1)} \\
					&\qquad+2\big(\sup_{t\in[0,T]}\|\nabla \vartheta (t)\|_{L^2(\varOmega)}^{2}\big)\|\vartheta_m-\vartheta \|_{L^2(0,T;L^2(\varOmega))}^{2}\\
					&\to 0\ \text{ as }\ m\to\infty . \label{non linear con 4}
				\end{align}
			Finally, using \eqref{norm1}, \eqref{energy bound 2}, and \eqref{cont limit}, we obtain for the values of $p$ given in \eqref{eqn-restriction} that 
			\begin{align}
					&\int_{0}^{T}\Bigl\|\|\vartheta_m(t)\|_{L^p(\varOmega)}^{p}\vartheta_m(t)-\|\vartheta (t)\|_{L^p(\varOmega)}^{p}\vartheta (t)\Bigr\|_{L^2(\varOmega)}^{2}  dt \\
					&=\int_{0}^{T}\Bigl\|\bigl(\|\vartheta_m(t)\|_{L^p(\varOmega)}^{p}-\|\vartheta (t)\|_{L^p(\varOmega)}^{p}\bigr)\vartheta_m(t)+\|\vartheta (t)\|_{L^p(\varOmega)}^{p}
					\bigl(\vartheta_m(t)-\vartheta (t)\bigr)\Bigr\|_{L^2(\varOmega)}^{2}  dt \\
					&\leq2\int_{0}^{T}
					\Bigl\|\bigl(\|\vartheta_m(t)\|_{L^p(\varOmega)}^{p}-\|\vartheta (t)\|_{L^p(\varOmega)}^{p}\bigr)\vartheta_m(t)\Bigr\|_{L^2(\varOmega)}^{2}  dt \\
					&\qquad+2\int_{0}^{T}\Bigl\|\|\vartheta (t)\|_{L^p(\varOmega)}^{p}\bigl(\vartheta_m(t)-\vartheta (t)\bigr)\Bigr\|_{L^2(\varOmega)}^{2}  dt \\
					&\leq2K_8\sup_{t\in[0,T]}\Bigl(\|\vartheta_m(t)\|_{L^p(\varOmega)}^{2(p-1)}+\|\vartheta (t)\|_{L^p(\varOmega)}^{2(p-1)}\Bigr)\int_{0}^{T}
					\|\vartheta_m(t)-\vartheta (t)\|^2_{L^p(\varOmega)}\|\vartheta_m(t)\|_{L^2(\varOmega)}^{2}  dt \\
					&\qquad+2\sup_{t\in[0,T]}\|\vartheta (t)\|_{L^p(\varOmega)}^{2p}\int_{0}^{T}\|\vartheta_m(t)-\vartheta (t)\|_{L^2(\varOmega)}^{2}  dt \\
					&\leq2K'_8\sup_{t\in[0,T]}\Bigl(\|\vartheta_m(t)\|_{L^p(\varOmega)}^{2(p-1)}+\|\vartheta (t)\|_{L^p(\varOmega)}^{2(p-1)}\Bigr)
					\|\vartheta_m-\vartheta\|^2_{L^2(0,T;L^p(\varOmega))} \\
					&\qquad +2\big(\sup_{t\in[0,T]}\|\vartheta (t)\|_{L^p(\varOmega)}^{p}\big)\|\vartheta_m-\vartheta \|_{L^2(0,T;L^2(\varOmega))}^{2}\\
					&\to 0\ 
					  \text{ as }\ m\to\infty , \label{non linear con 5}
				\end{align}
			where $K_8,K'_8$ are some positive constant independent of $m$.
			
			\vskip 0.1cm 
			\noindent 
			\textbf{Step 5: Existence of a strong solution.}
			Combining \eqref{non linear con 0}, \eqref{non linear con 2},
			\eqref{non linear con 3}, \eqref{non linear con 4}, and
			\eqref{non linear con 5} with \eqref{eq:NLE pro},
			\eqref{ineq:pro1}, \eqref{ineq:pro3}, and \eqref{cont limit},
			we pass to the limit as $m\to\infty$ and obtain
					\begin{equation}\label{eq:existence}
				\left\{
				\begin{aligned}
					(\vartheta_{tt},\psi)&=-( \mathcal{A}\vartheta,\psi)-(|\vartheta|^{p-2}\vartheta,\psi)-\g(\vartheta_t,\psi) \\
					&\quad+\Bigl(-\|\vartheta_t\|_{L^2(\varOmega)}^{2}+\|\nabla \vartheta\|_{L^2(\varOmega)}^{2}+\|\vartheta\|_{L^p(\varOmega)}^{p}\Bigr)(\vartheta,\psi),
					\qquad\text{ in }\ \varOmega\times(0,T),\\
					\vartheta(0)&=\vartheta_0,\\
					\vartheta_t(0)&=\vartheta_1,
				\end{aligned}
				\right.
			\end{equation}
		for every $\psi\in V_n$. By a standard density argument, \eqref{eq:existence}
		extends to every $\psi\in L^2(\varOmega)$. Hence, $\vartheta$ is a strong solution of
		problem \eqref{eq:mains}-\eqref{MID}.
		Moreover, by \eqref{cont limit} and \eqref{ineq:pro1}, together with $
			\|\vartheta_m(t)\|_{L^2(\varOmega)}^2=1,
			\text{ for all }m\in\mathbb{N}, t\in[0,T],$
		we obtain
		\begin{equation}\label{Norm =1}
			\|\vartheta (t)\|_{L^2(\varOmega)}^2
			=\lim_{m\to\infty}\|\vartheta_m(t)\|_{L^2(\varOmega)}^2
			=1,
			\  t\in[0,T].
		\end{equation}
		Therefore, $\vartheta (t)\in\mathcal{M},$ $ t\in[0,T],$	which shows that the constraint is preserved by the limiting solution. Finally, it follows that
			\begin{equation}
				\left\{
				\begin{aligned}
					&\vartheta\in C_w\bigl((0,T;D(\A))\bigr)\cap C\bigl([0,T];H_0^1(\varOmega)\cap\mathcal{M}\bigr),\\
					&\vartheta_t\in C\bigl([0,T];L^2(\varOmega)\bigr)\cap C_w\big([0,T];H^1_0(\varOmega)\big).
				\end{aligned}
				\right.
			\end{equation}
			We establish the existence of a strong solution.

\vskip 0.1cm
\noindent 
			\textbf{Step 6: Uniqueness.}
			Let \(\vartheta\) and \(\widetilde{\vartheta} \) be two strong solutions of problem \eqref{eq:mains}-\eqref{MID} corresponding to the same initial data \(\vartheta_0\) and \(\vartheta_1\).  Define for $t\in[0,T]$ 
			\begin{equation}\label{def_uniqueness}
				\begin{aligned}
					&\lambda_\vartheta (t)
					=
					-\|\vartheta_t(t)\|_{L^2(\varOmega)}^2
					+\|\nabla \vartheta (t)\|_{L^2(\varOmega)}^2
					+\|\vartheta (t)\|_{L^p(\varOmega)}^p,\\
					&\text{and}\
					\lambda_{\widetilde{\vartheta}}(t)
					=
					-\|\widetilde{\vartheta}_t(t)\|_{L^2(\varOmega)}^2
					+\|\nabla \widetilde{\vartheta}(t)\|_{L^2(\varOmega)}^2
					+\|\widetilde{\vartheta}(t)\|_{L^p(\varOmega)}^p.
				\end{aligned}
			\end{equation}
			Setting \(w=\vartheta-\widetilde{\vartheta}\), we find that \(w\) satisfies
			\begin{equation}\label{eq_uniqueness}
				\left\{
				\begin{aligned}
					w_{tt}(t)
					&=
					-\mathcal{A} w(t)
					-\g w_t(t)
					+\Bigl(|\widetilde{\vartheta}(t)|^{p-2}\widetilde{\vartheta}(t)-|\vartheta (t)|^{p-2}\vartheta (t)\Bigr) \\
					&\quad
					+\lambda_\vartheta (t)\vartheta (t)-\lambda_{\widetilde{\vartheta}}(t)\widetilde{\vartheta}(t)
					\ \text{ in }\ \varOmega\times(0,T),\\
					w(0)&=0\qquad \text{in } \varOmega,\\
					w_t(0)&=0 \qquad \text{in } \varOmega,
				\end{aligned}
				\right.
			\end{equation}
			where $\lambda_\vartheta, \lambda_{\widetilde{\vartheta}}$ are defined in \eqref{def_uniqueness}. On taking the  \(L^2\)-inner product of \eqref{eq_uniqueness} with \(w_t(\cdot)\), we obtain
			\begin{equation}\label{eq_uniqueness_energy}
				\begin{aligned}
					(w_{tt}(t),w_t(t))
					&=
					-(\A w(t),w_t(t))
					-\g\|w_t(t)\|_{L^2(\varOmega)}^2 \\
					&\quad
					+\Bigl(|\widetilde{\vartheta}(t)|^{p-2}\widetilde{\vartheta}(t)-|\vartheta (t)|^{p-2}\vartheta (t),w_t(t)\Bigr) \\
					&\quad
					+\lambda_\vartheta (t)(\vartheta (t),w_t(t))
					-\lambda_{\widetilde{\vartheta}}(t)(\widetilde{\vartheta}(t),w_t(t)),
				\end{aligned}
			\end{equation}
	for a.e. $t\in(0,T)$. 		Using  the identity
			\begin{equation*}
				\lambda_\vartheta(\vartheta,w_t)
				-\lambda_{\widetilde{\vartheta}}(\widetilde{\vartheta},w_t)
				=
				\lambda_{\vartheta}(w,w_t)
				+
				\bigl(\lambda_\vartheta-\lambda_{\widetilde{\vartheta}}\bigr)(\widetilde{\vartheta},w_t),
			\end{equation*}
			and integration by parts together with relation \eqref{identity 1} on the above equality   \eqref{eq_uniqueness_energy}, we deduce  
				\begin{align}
					&\frac{1}{2}\frac{d}{dt}
					\Bigl(
					\|w_t(t)\|_{L^2(\varOmega)}^2
					+\|w(t)\|_{H_0^1(\varOmega)}^2
					\Bigr)
					+\g\|w_t(t)\|_{L^2(\varOmega)}^2\\
					&\le
					K_9
					\Bigl(
					(|\vartheta (t)|+|\widetilde{\vartheta}(t)|)^{p-2}|w(t)|,
					\,w_t(t)
					\Bigr) 
					+
					|\lambda_\vartheta (t)|
					\|w(t)\|_{L^2(\varOmega)}
					\|w_t(t)\|_{L^2(\varOmega)} \\
					&\qquad+
					|\lambda_\vartheta (t)-\lambda_{\widetilde{\vartheta}}(t)|
					\|\widetilde{\vartheta}(t)\|_{L^2(\varOmega)}
					\|w_t(t)\|_{L^2(\varOmega)},\label{ineq_uniqueness_1}
				\end{align}
			where \(K_9>0\) is a positive constant.
			Next, we estimate the difference \(\lambda_\vartheta-\lambda_{\widetilde{\vartheta}}\). By definition, we have 
				\begin{align}
					|\lambda_\vartheta (t)-\lambda_{\widetilde{\vartheta}}(t)|
					&=
					\Bigl|
					\|\widetilde{\vartheta}_t(t)\|_{L^2(\varOmega)}^2-\|\vartheta_t(t)\|_{L^2(\varOmega)}^2
					+\|\nabla \vartheta (t)\|_{L^2(\varOmega)}^2-\|\nabla \widetilde{\vartheta}(t)\|_{L^2(\varOmega)}^2 \\
					&\qquad
					+\|\vartheta (t)\|_{L^p(\varOmega)}^p-\|\widetilde{\vartheta}(t)\|_{L^p(\varOmega)}^p
					\Bigr| \\
					&\le
					K_{10}
					\Bigl[
					(\|\vartheta_t(t)\|_{L^2(\varOmega)}+\|\widetilde{\vartheta}_t(t)\|_{L^2(\varOmega)})
					\|w_t(t)\|_{L^2(\varOmega)}
					\\
					&\qquad
					+
					(\|\nabla \vartheta (t)\|_{L^2(\varOmega)}+\|\nabla \widetilde{\vartheta}(t)\|_{L^2(\varOmega)})
					\|\nabla w(t)\|_{L^2(\varOmega)}
					\\
					&\qquad
					+
					\bigl(
					\|\vartheta (t)\|_{L^{2(p-1)}(\varOmega)}^{p-1}
					+
					\|\widetilde{\vartheta}(t)\|_{L^{2(p-1)}(\varOmega)}^{p-1}
					\bigr)
					\|w(t)\|_{L^2(\varOmega)}
					\Bigr]
					\\
					&\le
					K_{11}
					\Bigl(
					\|w_t(t)\|_{L^2(\varOmega)}
					+
					\|w(t)\|_{H_0^1(\varOmega)}
					+
					\|w(t)\|_{L^2(\varOmega)}
					\Bigr)
					\\
					&\le
					K_{12}
					\Bigl(
					\|w_t(t)\|_{L^2(\varOmega)}
					+
					\|w(t)\|_{H_0^1(\varOmega)}
					\Bigr),\label{ineq_uniqueness_2}
				\end{align}
			where \(K_{10}\), \(K_{11}\), and \(K_{12}\) are positive constants.
			
			Substituting  \eqref{ineq_uniqueness_2} into \eqref{ineq_uniqueness_1}, and using H\"older's inequality together with \eqref{energy bound 1}, we obtain
			\begin{equation}
				\begin{aligned}
					&\frac{1}{2}\frac{d}{dt}
					\Bigl(
					\|w_t(t)\|_{L^2(\varOmega)}^2
					+
					\|w(t)\|_{H_0^1}^2
					\Bigr)
					+\g\|w_t(t)\|_{L^2(\varOmega)}^2\\
					&\le
					K_{14}
					\bigl(
					\|\vartheta (t)\|_{L^{d(p-2)}(\varOmega)}^{p-2}
					+
					\|\widetilde{\vartheta}(t)\|_{L^{d(p-2)}(\varOmega)}^{p-2}
					\bigr)
					\|w(t)\|_{L^\frac{2d}{(d-2)}(\varOmega)}
					\|w_t(t)\|_{L^2(\varOmega)}
					\\
					&\qquad
					+
					K_{13}
					\|w(t)\|_{L^2(\varOmega)}
					\|w_t(t)\|_{L^2(\varOmega)}
					\\
					&\qquad
					+
					K_{12}
					\bigl(
					\|w_t(t)\|_{L^2(\varOmega)}
					+
					\|w(t)\|_{H_0^1(\varOmega)}
					\bigr)
					\|\widetilde{\vartheta}(t)\|_{L^2(\varOmega)}
					\|w_t(t)\|_{L^2(\varOmega)},
				\end{aligned}
			\end{equation}
		for a.e. $t\in(0,T)$,	where \(K_{13}\) and \(K_{14}\) are some positive constants.
			Applying Young's inequality together with \eqref{embedding},  \eqref{Norm =1} and \eqref{energy bound 1} in the above inequality, we arrive at
			\begin{equation}\label{ODE_uniqueness}
				\left\{
				\begin{aligned}
					\frac{d}{dt}
					\Bigl(
					\|w_t(t)\|_{L^2(\varOmega)}^2
					+
					\|w(t)\|_{H_0^1(\varOmega)}^2
					\Bigr)
					&\le
					K_{15}
					\Bigl(
					\|w_t(t)\|_{L^2(\varOmega)}^2
					+
					\|w(t)\|_{H_0^1(\varOmega)}^2
					\Bigr),
					\\
					w(0)&=0,
					\\
					w_t(0)&=0,
				\end{aligned}
				\right.
			\end{equation}
			where \(K_{15}>0\) is a some positive constant.
			Applying Gr\"onwall's inequality to \eqref{ODE_uniqueness} and using the initial conditions, we obtain
			\begin{equation}
				\|w(t)\|_{H_0^1(\varOmega)}^2
				+
				\|w_t(t)\|_{L^2(\varOmega)}^2
				=0,
				\ \text{ for all }\ 
				t\in[0,T].
			\end{equation}
			Hence,
			$	w(t)\equiv 0,
				\ 
				t\in[0,T]\ \text{ in }\ H^1_0(\varOmega),$
		together with $w_t(t)\equiv 0,
			\ 
			t\in[0,T]\ \text{ in }\ L^2(\varOmega)$
 proves the uniqueness of strong solutions. 
		\end{proof}
		\section{Asymptotic Analysis}\label{AA}
The uniform-in-time energy bound in \eqref{energy bound 3} provides a natural starting point for studying the long-time behaviour of strong solutions to \eqref{eq:mains}-\eqref{MID}. The energy dissipation due to the damping term plays a key role in the asymptotic analysis and suggests that the solution approaches a stationary state as $t\to\infty$.
In this section, we investigate the asymptotic behaviour of strong solutions and establish their convergence as $t\to\infty$. We further characterize the possible limiting states and derive the stationary equation satisfied by any such limit.
\subsection{Subsequential asymptotic analysis}
		We fix a bounded smooth domain $\varOmega\subset\mathbb{R}^d$ with
		$C^2$ boundary. 
		Throughout the discussion, the exponent $p$ is assumed to satisfy \eqref{p range}.
		We are concerned with the asymptotic dynamics of solutions to the following problem posed on $(0,T)\times\varOmega$:
		\begin{equation}\label{asym1}
			\left\{
			\begin{aligned}
				\vartheta_{tt}+\g \vartheta_t+\A\vartheta+|\vartheta|^{p-2}\vartheta&=\lambda \vartheta
				&& \text{ in }\ \varOmega\times(0,T),\\
%				u&=0
%				&& \text{on }\partial\varOmega\times(0,T),\\
				\vartheta(0)&=\vartheta_0
				&& \text{ in }\ \varOmega,\\
				\vartheta_t(0)&=\vartheta_1
				&& \text{ in }\ \varOmega.
			\end{aligned}
			\right.
		\end{equation}
		where for $t\in[0,T]$ 
			\begin{equation}\label{def lam asym}
			\lambda(t)=-\|\vartheta_t(t)\|_{L^2(\varOmega)}^2+\|\nabla \vartheta (t)\|_{L^2(\varOmega)}^2+\|\vartheta (t)\|_{L^p(\varOmega)}^p,
		\end{equation}
		we assume that $\|\vartheta_0\|_{L^2(\varOmega)}=1,\ (\vartheta_0,\vartheta_1)=0.$
%		\begin{equation}\label{asym2}
%			\|\vartheta_0\|_{L^2(\varOmega)}^2=1,\qquad(\vartheta_0,\vartheta_1)_{L^2(\varOmega)}=0.
%		\end{equation}
		Throughout this section, we write $\vartheta (t):=\vartheta(\cdot,t).$
%		\begin{equation}
%			\vartheta (t):=u(\cdot,t).
%		\end{equation}
		\begin{definition}
			The energy functional associated with \eqref{asym1} is defined by
			\begin{equation}\label{energy}
				E(\vartheta (t),\vartheta_t(t)):=\frac12\|\vartheta_t(t)\|_{L^2(\varOmega)}^2+\frac12\|\nabla \vartheta (t)\|_{L^2(\varOmega)}^2+\frac1p\|\vartheta (t)\|_{L^p(\varOmega)}^p,
				\ \  t\ge0.
			\end{equation}
		\end{definition}
		
		\begin{lemma}\label{sub seq conv soln}
		Let $\vartheta_0\in D(\mathcal{A})\cap\mathcal{M}$ and $\vartheta_1\in H_0^1(\varOmega)$ with $(\vartheta_0,\vartheta_1)=0$. 	Let $\vartheta$ be the unique strong solution of \eqref{asym1}. Then there exists a sequence $\{t_n\}_{n\in\mathbb{N}}$, $t_n\to\infty$ as $n\to\infty$ and $\vartheta_{\infty}\in H_0^1(\varOmega)\cap\mathcal{M}$  such that 
		\begin{align}\label{eqn-limit}
			\vartheta (t_n)\xrightarrow{w} \vartheta_{\infty}\  \text{ in }\ H_0^1(\varOmega)\ \text{ and }\ 	\lambda(t_n)\to \lambda_{\infty} \ \text{ as } \ n\to\infty. 
		\end{align} 
		\end{lemma}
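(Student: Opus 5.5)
We argue from the energy dissipation identity and the uniform bound \eqref{energy bound 3}, which passes to the limit solution by weak lower semicontinuity and, since $\g>0$, gives
\[
\sup_{t\ge0}E(\vartheta(t),\vartheta_t(t))+\g\int_0^\infty\|\vartheta_t(s)\|_{L^2(\varOmega)}^2\,ds\le C(\vartheta_0,\vartheta_1)<\infty .
\]
More precisely, for the strong solution we first justify $\frac{d}{dt}E(\vartheta,\vartheta_t)=-\g\|\vartheta_t\|_{L^2(\varOmega)}^2$. To do so we take the $L^2$ inner product of the equation with $\vartheta_t$. This is legitimate because $\vartheta_{tt}\in L^\infty(0,T;L^2(\varOmega))$ and $\vartheta_t\in L^\infty(0,T;H_0^1(\varOmega))$. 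We then use $(\vartheta,\vartheta_t)=\frac12\frac{d}{dt}\|\vartheta\|_{L^2(\varOmega)}^2=0$, which holds since $\vartheta(t)\in\bM$ by Theorem \ref{wdss}, so the multiplier term drops out. It follows that $E$ is nonincreasing and that $\vartheta_t\in L^2(0,\infty;L^2(\varOmega))$.

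\textbf{Step 1 (choice of times).} Since $\int_0^\infty\|\vartheta_t\|_{L^2(\varOmega)}^2\,ds<\infty$, we have $\liminf_{t\to\infty}\|\vartheta_t(t)\|_{L^2(\varOmega)}=0$. Otherwise $\|\vartheta_t\|_{L^2(\varOmega)}^2\ge\delta>0$ for all large $t$, contradicting integrability; continuity of $t\mapsto\|\vartheta_t(t)\|_{L^2(\varOmega)}$ from $C([0,T];L^2(\varOmega))$ makes pointwise evaluation meaningful. Hence we can choose $t_n\to\infty$ with $\|\vartheta_t(t_n)\|_{L^2(\varOmega)}\to0$. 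Concretely, for each $n$ there is $t_n\in[n,\infty)$ with $\|\vartheta_t(t_n)\|_{L^2(\varOmega)}^2\le 1/n$.

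\textbf{Step 2 (weak limit).} The energy bound gives $\sup_n\|\nabla\vartheta(t_n)\|_{L^2(\varOmega)}<\infty$. By reflexivity of $H_0^1(\varOmega)$ we extract a subsequence, not relabelled, with $\vartheta(t_n)\rightharpoonup\vartheta_\infty$ weakly in $H_0^1(\varOmega)$. By the compact embedding \eqref{embedding}, $\vartheta(t_n)\to\vartheta_\infty$ strongly in $L^2(\varOmega)$, and also in $L^p(\varOmega)$. The latter uses the restriction \eqref{p range}: for $d\ge3$, $p<\frac{2(d-1)}{d-2}<\frac{2d}{d-2}$, so $L^p$ is compactly embedded. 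Strong $L^2$ convergence gives $\|\vartheta_\infty\|_{L^2(\varOmega)}=1$, so $\vartheta_\infty\in H_0^1(\varOmega)\cap\bM$.

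\textbf{Step 3 (convergence of $\lambda(t_n)$).} This is the only delicate point, because $\|\nabla\vartheta(t_n)\|_{L^2(\varOmega)}^2$ need not converge to $\|\nabla\vartheta_\infty\|_{L^2(\varOmega)}^2$ under weak convergence alone. The statement only asks for some limit $\lambda_\infty$, so we handle this by a further extraction. The numbers $\lambda(t_n)=-\|\vartheta_t(t_n)\|^2+\|\nabla\vartheta(t_n)\|^2+\|\vartheta(t_n)\|_{L^p}^p$ are bounded by $2pC$ by the energy bound. Bolzano--Weierstrass then yields a subsequence along which $\lambda(t_n)\to\lambda_\infty$. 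Along this subsequence $\|\vartheta_t(t_n)\|^2\to0$ and $\|\vartheta(t_n)\|_{L^p}^p\to\|\vartheta_\infty\|_{L^p}^p$, so in fact $\|\nabla\vartheta(t_n)\|^2\to\lambda_\infty-\|\vartheta_\infty\|_{L^p}^p$. Identifying this limit with $\|\nabla\vartheta_\infty\|^2$, i.e.\ upgrading to strong $H_0^1$ convergence, is deferred to the later analysis (Theorem \ref{thm-strong}) and is not needed here. The main obstacle is therefore only the rigorous justification of the energy identity for strong solutions. If needed, we would obtain it by passing to the limit in the Galerkin energy equality, using the convergences \eqref{cont limit}, which are strong in $C([0,T];H_0^1)\times C([0,T];L^2)$ and hence give the identity exactly.
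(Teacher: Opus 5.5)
Your proposal is correct and follows the paper's proof essentially step for step: the energy dissipation identity via the orthogonality $(\vartheta,\vartheta_t)=0$, integrability of $\|\vartheta_t\|_{L^2(\varOmega)}^2$, Banach--Alaoglu plus the compact embedding into $L^p(\varOmega)$, and Bolzano--Weierstrass for $\lambda(t_n)$. Your ordering in Step 1 (choosing $t_n$ with $\vartheta_t(t_n)\to0$ before extracting the weak limit) is in fact more careful than the paper's, and your Step 3 extraction suffices for the statement as written; note, though, that it is not delicate. Since $E(\vartheta(t),\vartheta_t(t))\searrow E_\infty$, the convergences $\vartheta_t(t_n)\to0$ and $\vartheta(t_n)\to\vartheta_\infty$ in $L^p(\varOmega)$ force $\|\nabla\vartheta(t_n)\|_{L^2(\varOmega)}^2\to 2E_\infty-\frac2p\|\vartheta_\infty\|_{L^p(\varOmega)}^p$ without any further extraction, which identifies $\lambda_\infty=2E_\infty+\frac{p-2}{p}\|\vartheta_\infty\|_{L^p(\varOmega)}^p$ as in \eqref{l def}; the paper relies on this explicit formula later in Lemma \ref{lamda t+tk conv}.
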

		
		\begin{proof}
		Taking the $L^2$-inner product of  \eqref{asym1} by $\vartheta_t(t)$ and integrating over $\varOmega$, we obtain
	\begin{equation}
		\begin{aligned}
			(\vartheta_{tt}(t),\vartheta_t(t))+\g (\vartheta_t(t),\vartheta_t(t))+(\A\vartheta(t),\vartheta_t(t))+\bigl(|\vartheta (t)|^{p-2}\vartheta (t),\vartheta_t(t)\bigr)=
			\lambda(t)\,(\vartheta (t),\vartheta_t(t)),
		\end{aligned}
	\end{equation}
for a.e. $t\in[0,T]$. By an application of \emph{Lions--Magenes Lemma} (\cite[Lemma 1.2, Chapter 3]{RT-01})
	Since $(\vartheta (t),\vartheta_t(t))_{L^2(\varOmega)}=0$, for all $t\ge0$, it follows that
	\begin{equation}\label{asym3}
		\frac{d}{dt}
		\left(\frac12\|\vartheta_t(t)\|_{L^2(\varOmega)}^2 + \frac12\|\nabla \vartheta (t)\|_{L^2(\varOmega)}^2 + \frac1p\|\vartheta (t)\|_{L^p(\varOmega)}^p
		\right)	+ \g\|\vartheta_t(t)\|_{L^2(\varOmega)}^2=0.
	\end{equation}
	Consequently, \eqref{asym3} can be rewritten as
	\begin{equation}\label{asym4}
		\frac{d}{dt}E(\vartheta (t),\vartheta_t(t))+\g\|\vartheta_t(t)\|_{L^2(\varOmega)}^2=0.
	\end{equation}
	Therefore, $\frac{d}{dt}E(\vartheta (t),\vartheta_t(t))=	-\g\|\vartheta_t(t)\|_{L^2(\varOmega)}^2\le 0.$
	Hence, the energy functional $E(\vartheta (t),\vartheta_t(t))$ is non-increasing along trajectories of \eqref{asym1}.
	Now integrating \eqref{asym4} with respect to time from $0\ \text{to}\ t$, we have
	\begin{equation}\label{asym5}
		\begin{aligned}
			E(\vartheta (t),\vartheta_t(t))+\g\int_{0}^{t}\|\vartheta_t(s)\|_{L^2(\varOmega)}^2\ ds=	E(0)\le  K_{16}\big( \norm{\vartheta_0}^2_{H^1_0}+\norm{\vartheta_0}^p_{H^1_0}+\norm{\vartheta_1}^2_{L^2(\varOmega)} \big)<\infty,
		\end{aligned}
	\end{equation}
		where $E(0):=	E(\vartheta (0),\vartheta_t(0))$. Since $E(\vartheta (t),\vartheta_t(t))$ is non-increasing and bounded below, there exists $E_{\infty}\geq 0$ such that 
		\begin{equation}\label{Energy tends}
			E(\vartheta (t),\vartheta_t(t))\searrow \E \  \text{ as }\  t\to\infty.
		\end{equation}
		Now, the right-hand side of \eqref{asym5} is independent of time. Therefore, letting $t\to\infty$ in \eqref{asym5}, we obtain
		\begin{equation}
			\E+\g\int_{0}^{\infty}\|\vartheta_t(s)\|_{L^2(\varOmega)}^2\ ds=E(0).
		\end{equation}
		This implies 
		\begin{equation}\label{velocity bound}
			\int_{0}^{\infty}\|\vartheta_t(s)\|_{L^2(\varOmega)}^2\ ds=\frac{E(0)-\E}{\g}<\infty.
		\end{equation}
		By the help of  \eqref{energy} and \eqref{asym5}, we have
		\begin{equation}\label{asym6}
				\|\vartheta_t(t)\|_{L^2(\varOmega)}^2\le 2E(0),\
				\|\nabla \vartheta (t)\|_{L^2(\varOmega)}^2\le 2E(0),\
				\|\vartheta (t)\|_{L^p(\varOmega)}^p\le pE(0).
		\end{equation}
With  the help of \emph{Banach-Alaoglu theorem} \cite[Theorem C.17]{JCR-20}, we can extract an  increasing sequence $\{t_n\}_{n=1}^{\infty}$, $t_n\to\infty\ \text{as}\ n\to\infty$,  such that 
		\begin{equation}\label{weak asym conv}
			\vartheta (t_n) \overset{w}{\to}\vartheta_\infty\  \text{ as }\ n\to\infty\ \text{ in }\ H^1_0(\varOmega).
		\end{equation}
		It follows from \eqref{velocity bound} and \eqref{asym6} that, after relabelling the subsequence, we obtain
		\begin{equation}\label{velocity asym conv}
			\vartheta_t(t_n)\to0 \ \text{ as }\ n\to\infty,\  \text{ in }\ L^2(\varOmega).
		\end{equation}
		For the values of $p$ specified in \eqref{p range}, the embedding $H_0^1(\varOmega)\hookrightarrow L^p(\varOmega)$ is compact. Consequently, we have
		\begin{equation}\label{Lp asym conv}
			\vartheta (t_n)\to\vartheta_\infty \  \text{ as }\ n\to\infty\  \text{ in }\ L^p(\varOmega).
		\end{equation}
		Since $\vartheta (t)\in\mathcal{M},$ for all $t\geq 0$, using   \eqref{Lp asym conv}, we get 
		\begin{equation}
	1=	\lim_{n\to\infty}\|\vartheta (t_n)\|_{L^2(\varOmega)}^2=	\norm{\vartheta_\infty}^2 _{L^2(\varOmega)}.
		\end{equation}
			Taking $t=t_n$ in \eqref{energy} and passing $n\to\infty$	with the help of \eqref{Energy tends}, \eqref{velocity asym conv} and \eqref{Lp asym conv}, we deduce  
		\begin{equation}\label{Enery asym 2}
			\lim_{n\to\infty}\Bigl (\frac12\|\nabla \vartheta (t_n)\|_{L^2(\varOmega)}^2\Bigr )+\frac1p\|\vartheta_\infty\|_{L^p(\varOmega)}^p=\E.
		\end{equation}
		Let us now discuss the convergence of $\lambda(t_n)$, where $\lambda(t)$ is defined in \eqref{def lam asym}. Therefore, 
		\begin{equation}\label{asym lamda}
			\lambda(t_n)=\bigr (-\|\vartheta_t(t_n)\|_{L^2(\varOmega)}^2+\|\nabla \vartheta (t_n)\|_{L^2(\varOmega)}^2+\|\vartheta (t_n)\|_{L^p(\varOmega)}^p\bigr ).
		\end{equation}
		Using \eqref{asym6} in \eqref{asym lamda} and then applying the \emph{Bolzano--Weierstrass theorem}, we infer that there exists 	$\l\in\mathbb{R}$ such that, after passing to a subsequence and
		relabelling it,	$	\lambda(t_n)\to\l	\ \text{ as }\ n\to\infty.	$ Taking limit $n\to\infty$ in \eqref{asym lamda} and using \eqref{velocity asym conv} and \eqref{Lp asym conv}, we find 
		\begin{equation}\label{eqn-conv-4}
			\lim_{n\to\infty}\lambda(t_n)=\lim_{n\to\infty}\bigl (\|\nabla \vartheta (t_n)\|_{L^2(\varOmega)}^2\bigr )+\|\vartheta_\infty\|^p_{L^p(\varOmega)}. 
		\end{equation}
		The convergence given in \eqref{Enery asym 2} implies 
		\begin{equation}\label{l def}
			\begin{aligned}
				\lim_{n\to\infty}\lambda(t_n)=2\E-\frac2p\|\vartheta_\infty\|_{L^p(\varOmega)}^p+\|\vartheta_\infty\|^p_{L^p(\varOmega)}=2\E +\frac{(p-2)}{p}\|\vartheta_\infty\|^p_{L^p(\varOmega)}=:\lambda_{\infty},
			\end{aligned}
		\end{equation}
		which completes the proof. 
		\end{proof}

		\begin{theorem}\label{apct}
	%		Let $\varOmega\subset\mathbb{R}^{d}$ be a bounded smooth domain, 
%			and let
%			$u=u(x,t)$ be a global solution of
%			\begin{equation}\label{asym precom}
%				\vartheta_{tt}+\g \vartheta_t+\A\vartheta+|\vartheta|^{p-2}\vartheta
%				=\lambda(t)u
%				\ \text{ in }\ \varOmega\times(0,\infty),
%			\end{equation}
%%			subject to the homogeneous Dirichlet boundary condition
%%			\begin{equation}
%%				u=0
%%				\quad\text{on }\partial\varOmega\times(0,\infty),
%%			\end{equation}
%			and the initial conditions
%			\begin{equation}
%				\vartheta(0)=\vartheta_0\in D(\A),
%				\qquad
%				\vartheta_t(0)=\vartheta_1\in H_0^1(\varOmega),
%			\end{equation}
%			where
%			\begin{equation}
%				(\vartheta_0,\vartheta_1)_{L^2(\varOmega)}=0,
%				\qquad
%				\|\vartheta (t)\|_{L^2(\varOmega)}=1,
%				\quad\text{for all }t\geq0.
%			\end{equation}
				Let $\vartheta$ be the unique strong solution of problem  \eqref{asym1}. Define 
			\begin{equation}
				\mathcal{X}:=H_0^1(\varOmega)\times L^2(\varOmega),
				\ 
				%\mathcal{U} (t):=(\vartheta (t),\vartheta_t(t)), \ t\geq 0. 
				\mathcal{U}_0:=(\vartheta _0,\vartheta_1). 
			\end{equation}
			Then the orbit
			\begin{equation}
				\mathcal{O}(\vartheta_0,\vartheta_1)
				=
				\left\{
				(\vartheta (t),\vartheta_t(t)):t\geq0
				\right\}
			\end{equation}
			is precompact in $\mathcal{X},$ that is, $\overline{\mathcal{O}(\vartheta_0,\vartheta_1)}^{\mathcal{X}}$ is compact in $\mathcal{X}$.
		\end{theorem}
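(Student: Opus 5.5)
The plan is to use a variation-of-constants (Duhamel) splitting of the orbit into a linear part that decays exponentially and a forced part whose forcing takes values in a fixed compact subset of $L^2(\varOmega)$. This is the mechanism behind \emph{Webb's theorem} \cite[Proposition 3.2]{Webb79} (cf. \cite[Lemma 7.6.2]{AH+MAJ-15}).

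\textbf{Step 1 (linear semigroup).} Write $\mathcal{U}(t)=(\vartheta(t),\vartheta_t(t))$. On $\mathcal{X}$, let $\mathbb{B}(u,v)=(v,-\A u-\g v)$ with domain $D(\A)\times H_0^1(\varOmega)$. It generates a $C_0$-semigroup $S(t)$, and for $\g>0$ this semigroup is exponentially stable: $\|S(t)\|_{\mathcal{L}(\mathcal{X})}\le Me^{-\omega t}$ for some $\omega>0$. This is the standard result for the linear damped wave equation, proved with a perturbed energy $\tfrac12\|v\|^2+\tfrac12\|\nabla u\|^2+\varepsilon(u,v)$.

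Define the forcing
\[
f(t):=-|\vartheta(t)|^{p-2}\vartheta(t)+\lambda(t)\vartheta(t),
\]
with $\lambda(t)$ as in \eqref{def lam asym}. Since $\vartheta$ is a strong solution (Theorem~\ref{wdss}), it satisfies the mild formula
\[
\mathcal{U}(t)=S(t)\mathcal{U}_0+\int_0^t S(t-s)\,(0,f(s))\,ds .
\]

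\textbf{Step 2 (the linear part is precompact).} The set $\{S(t)\mathcal{U}_0: t\ge0\}$ is precompact. On $[0,\tau]$ it is the continuous image of a compact interval. For $t>\tau$ it lies in a ball of radius $Me^{-\omega\tau}\|\mathcal{U}_0\|_{\mathcal{X}}$, which can be made arbitrarily small. Hence the set is totally bounded.

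\textbf{Step 3 (the forcing lies in a compact set).} By \eqref{asym6}, $\{\vartheta(t)\}_{t\ge0}$ is contained in a ball $B$ of $H_0^1(\varOmega)$, and $|\lambda(t)|\le R:=(2+p)E(0)$.

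The map $u\mapsto\lambda u$ sends $[-R,R]\times B$ into a precompact subset of $L^2(\varOmega)$, by the compactness of $H_0^1(\varOmega)\hookrightarrow L^2(\varOmega)$.

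The Nemytskii map $u\mapsto|u|^{p-2}u$ sends $B$ into a precompact subset of $L^2(\varOmega)$. This is exactly where the strict inequality in \eqref{p range} is used. One has $\||u|^{p-2}u-|w|^{p-2}w\|_{L^2}\le C(\|u\|^{p-2}_{L^{2(p-1)}}+\|w\|^{p-2}_{L^{2(p-1)}})\|u-w\|_{L^{2(p-1)}}$, and $p<\frac{2(d-1)}{d-2}$ is equivalent to $2(p-1)<\frac{2d}{d-2}$. Hence $H_0^1\hookrightarrow\hookrightarrow L^{2(p-1)}$, so the map is compact; for $d=1,2$ this holds for all $p$.

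Therefore $f(s)\in K$ for all $s\ge0$, where $K\subset L^2(\varOmega)$ is compact, and we may take $K$ closed, convex and balanced.

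\textbf{Step 4 (compactness of the convolution; main obstacle).} This step is where the real work lies. Fix $\tau>0$ and split the integral as
\[
\int_0^t S(t-s)(0,f(s))\,ds=\int_0^{\min(t,\tau)}S(\sigma)(0,f(t-\sigma))\,d\sigma+\int_{\tau}^{t}S(\sigma)(0,f(t-\sigma))\,d\sigma ,
\]
where the second term is present only when $t>\tau$. Its norm is at most $\tfrac{M}{\omega}e^{-\omega\tau}\sup_K\|k\|_{L^2}$, uniformly in $t$.

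For the first term, the map $(\sigma,k)\mapsto S(\sigma)(0,k)$ is continuous on $[0,\tau]\times K$, so its image $Q_\tau$ is compact in $\mathcal{X}$. By Mazur's theorem, $\overline{\mathrm{conv}}(Q_\tau\cup\{0\})$ is compact. A Bochner integral over an interval of length at most $\tau$ with integrand in $Q_\tau$ lies in $\tau\,\overline{\mathrm{conv}}(Q_\tau\cup\{0\})$, which is compact.

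Thus, for every $\varepsilon>0$, the convolution set is contained in a compact set plus an $\varepsilon$-ball, so it is totally bounded.

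\textbf{Conclusion.} The orbit $\mathcal{O}(\vartheta_0,\vartheta_1)$ is contained in the sum of two precompact subsets of $\mathcal{X}$ (Steps 2 and 4), and a sum of precompact sets is precompact. This gives the claim. The only delicate points are the justification of the Duhamel formula for strong solutions and the uniform exponential decay of $S(t)$; both are standard.
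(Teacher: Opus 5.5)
Your proof is correct and follows the same route as the paper: a Duhamel splitting with the exponentially stable linear damped-wave semigroup (obtained from the perturbed energy $E^\rho+\varepsilon(\rho,\rho_t)$), together with compactness of the forcing $-|\vartheta|^{p-2}\vartheta+\lambda\vartheta$ via $H_0^1(\varOmega)\hookrightarrow\hookrightarrow L^{2(p-1)}(\varOmega)$, which is Proposition~\ref{compact B}. The only difference is that the paper cites Webb's Proposition 3.2 with $T_2=0$ at the end, whereas your Step 4 (Mazur's theorem plus the exponential tail bound) proves the needed compactness of the convolution term directly.
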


		\begin{proof}
			We divide the proof into the following steps:
			\vskip 0.1cm
			\noindent 
			\textbf{Step 1:} \emph{Semigroup decomposition.}
			Consider the linear operator
			\begin{equation}
				G=
				\begin{pmatrix}
					0 & I\\
					-\A& -\g I
				\end{pmatrix}, \ \ D(G)=D(\A)\times H_0^1(\varOmega).
			\end{equation}
		 The associated linear damped wave equation
			$$ \vartheta_{tt}+\g \vartheta_{t}+\A\vartheta=0,$$
			generates a strongly continuous semigroup $\{e^{Gt}:t\ge 0\}$ on $\mathcal{X}.$ By Duhamel principle, we have 
			\begin{equation}
				\vartheta (t)=e^{Gt}\mathcal{U}_0
				+
				\int_{0}^{t}
				e^{G(t-s)}
				\begin{pmatrix}
					0\\
					-|\vartheta(s)|^{p-2}\vartheta(s)+\lambda(s)\vartheta(s)
				\end{pmatrix}
				ds,\ \ t\geq 0. 
			\end{equation}
			We define, $T_1(t):=e^{Gt}.$ Then, we have 
			\begin{equation}
				\vartheta (t)=T_{1}(t)	\mathcal{U}_0+\int_{0}^{t}
				T_1(t-s)
				\begin{pmatrix}
					0\\
						-|\vartheta(s)|^{p-2}\vartheta(s)+\lambda(s)\vartheta(s)
				\end{pmatrix}
				ds.
			\end{equation}
			\vskip 0.1cm
			\noindent 
			\textbf{Step 2:} \emph{Decay of $T_1$.}
			Consider the linear damped wave equation
%			\begin{equation}\label{asym precompt 1}
%				\left\{
%				\begin{aligned}
%					&\rho_{tt}+\g \rho_{t}-\Delta \rho=0&&\quad\text{ in }\ (0,\infty)\times\varOmega,\\
%					&\rho=0&&\quad\text{ on }\ (0,\infty)\times {\partial\varOmega},\\
%					&\rho(0)=\rho_0,\ 	\rho_{t}(0)=\rho_{1}&&\quad\text{ in }\ \varOmega.\\
%				\end{aligned}
%				\right.
%			\end{equation}
				\begin{equation}\label{asym precompt 1}
				\left\{
				\begin{aligned}
					&\rho_{tt}+\g \rho_{t}+\A \rho=0&&\quad\text{ in }\ (0,\infty)\times\varOmega,\\
					%&\rho=0&&\quad\text{ on }\ (0,\infty)\times {\partial\varOmega},\\
					&\rho(0)=\rho_0,\ 	\rho_{t}(0)=\rho_{1}&&\quad\text{ in }\ \varOmega.\\
				\end{aligned}
				\right.
			\end{equation}
			Let  $\chi(t):=(\rho(t),\rho_{t}(t)),$ $ \chi(0)=\chi_0=(\rho_{0},\rho_{1})\in\mathcal{X},$
			so that
		$
			\chi(\cdot)=T_1(\cdot)\chi_{0}\in C([0,\infty);\mathcal{X}). 
		$
		%	where $T_1(t)$ denotes the semigroup generated by $G$ on $X := H^1_0(\varOmega)\times L^2(\varOmega).$
			We shall show that there exist constants $C,\omega>0$ such that
			$$ \|T_1(t)\|_{\mathcal{L}(\mathcal{X})}\le Ce^{-\omega t}.$$
			Define the energy functional
			\begin{equation}
				E^\rho(t)=
				\frac{1}{2}\|\rho_t(t)\|_{L^2(\varOmega)}^2+\frac{1}{2}\|\nabla \rho(t)\|_{L^2(\varOmega)}^2,\ \ t\geq 0. 
			\end{equation}
		Taking the $L^2$-inner product with the equation \eqref{asym precompt 1} by $\rho_t$, we obtain 
			\begin{equation}\label{eqn-inner-product}
				\langle\rho_{tt},\rho_t\rangle+\g \|\rho_t\|_{L^2(\varOmega)}^2+\langle\A \rho,\rho_t\rangle=0.
			\end{equation}
%			Since,
%			\begin{equation}
%				(\rho_{tt},\rho_t)=\frac{d}{dt}\left(\frac{1}{2}\|\rho_t\|_{L^2(\varOmega)}^2\right)\quad\text{and}\quad 	(-\Delta \rho,\rho_t)=\frac{d}{dt}\left(\frac{1}{2}\|\nabla \rho\|_{L^2(\varOmega)}^2\right),
%			\end{equation}
		Then by using \emph{Lions--Magenes Lemma}, 	it follows that
			\begin{equation}
				\frac{d}{dt}\left(\frac{1}{2}\|\rho_t\|_{L^2(\varOmega)}^2+\frac{1}{2}\|\nabla \rho\|_{L^2(\varOmega)}^2\right)=
				-\g \|\rho_t\|_{L^2(\varOmega)}^2.
			\end{equation}
			Therefore, for all $t\geq 0$, we have 
			\begin{equation}
				E^\rho_t(t)=-\g \|\rho_t(t)\|_{L^2(\varOmega)}^2.\label{energy-decay}
			\end{equation}
			Introduce the \emph{modified Lyapunov functional} by 
			\begin{equation}
				\widehat{F}(t)=E^\rho(t)+\varepsilon (\rho(t),\rho_t(t)),
			\end{equation}
		for all $t\geq 0$, 	where $\varepsilon>0$ will be chosen sufficiently small. Using Young's and Poincar\'e's inequalities, we obtain
			\begin{equation}\label{YPLf}
				|(\rho,\rho_t)|\le\delta \|\rho_t\|_{L^2(\varOmega)}^2+\frac{1}{4\delta\sigma_1} \|\nabla \rho\|_{L^2(\varOmega)}^2,
			\end{equation}
			where $\delta>0$ and $\sigma_1$ is the first eigenvalue of the Dirichlet Laplacian. Consequently, for $\varepsilon>0$ sufficiently small, there exist positive constants $c_1$ and $c_2$ such that
			\begin{equation}
				c_1 E^\rho(t)\le \widehat{F}(t)\le c_2 E^\rho(t),
				\label{equivalence}
			\end{equation}
		for all $t\geq 0$.	We know that  $\frac{d}{dt}(\rho,\rho_t)	=\|\rho_t\|_{L^2(\varOmega)}^2+\langle \rho,\rho_{tt}\rangle.$
			Since, $\rho_{tt}=-\g \rho_t-\A \rho$ in $H^{-1}(\varOmega)$, we have
			\begin{equation}
				\langle\rho,\rho_{tt}\rangle=	-\g (\rho,\rho_t)+\langle\rho,-\A \rho\rangle.
			\end{equation}
		Therefore, we have 
			\begin{equation}\label{LFD}
				\frac{d}{dt}(\rho,\rho_t)=
				\|\rho_t\|_{L^2(\varOmega)}^2- \|\nabla \rho\|_{L^2(\varOmega)}^2-\g (\rho,\rho_t).
			\end{equation}
			Differentiating $\hat{F}$ with respect to $t$ and using \eqref{energy-decay} and \eqref{LFD}, we obtain
			\begin{equation}
				\widehat{F}_t= E^\rho_t+\varepsilon\frac{d}{dt} (\rho,\rho_t)=
				-(\g-\varepsilon)\|\rho_t\|_{L^2(\varOmega)}^2-\varepsilon \|\nabla \rho\|_{L^2(\varOmega)}^2-
				\varepsilon \g (\rho,\rho_t).
			\end{equation}
			By the help of \eqref{YPLf}, we have
			\begin{equation}
				\widehat{F}_t 
				\le
				-(\g-\varepsilon-\varepsilon \g \delta)
				\|\rho_t\|_{L^2(\varOmega)}^2-	
				\varepsilon
				\left(1-\frac{\g}{4\delta\lambda_1}\right)
				\|\nabla \rho\|_{L^2(\varOmega)}^2.
			\end{equation}
			Choosing first $\delta>\frac{\g}{4\sigma_1}>0$ and then $0<\varepsilon<\frac{4\g \sigma_1}{4\sigma_1+\g^2}$ sufficiently small, we obtain
			\begin{equation}
				\widehat{F}_t 
				\le
				-c
				\left(
				\|\rho_t\|_{L^2(\varOmega)}^2
				+
				\|\nabla \rho\|_{L^2(\varOmega)}^2
				\right)=
				-2cE^\rho,
			\end{equation}
			for some constant $c>0$. Using \eqref{equivalence}, we infer that,
			$
			E^\rho(t)\ge\frac{1}{c_2}F(t),
			$
			and therefore
			$
				\widehat{F}_t(t)\le-\frac{2c}{c_2}F(t).
			$
			Setting
			$
			\omega=\frac{2c}{c_2},
			$
			we obtain
			$
				\widehat{F}_t(t)\le-\omega 	\widehat{F}(t).
			$
			By Gr\"onwall's inequality,
			\begin{equation}
				\widehat{F}(t)
				\le
					\widehat{F}(0)e^{-\omega t}, \ \ t\geq 0. 
			\end{equation}
			Using again the equivalence of $	\widehat{F}$ and $E^\rho$, we conclude that
			$
			E^\rho(t)
			\le
			C e^{-\omega t} E^\rho(0),
			$
			 and 
			\begin{equation}
				\|(\rho(t),\rho_t(t))\|_{H_0^1(\varOmega)\times L^2(\varOmega)}\le (C e^{-\omega t})^\frac{1}{2}\|(\rho_0,\rho_1)\|_{H_0^1(\varOmega)\times L^2(\varOmega)}.
			\end{equation}
			Equivalently,
			\begin{equation}\label{exp decay}
				\|T_1(t)\|_{\mathcal{L}(\mathcal{X})}\le Ce^{-\omega t},
			\end{equation}
		so that 
			\begin{equation}
				\sup_{\|\chi_0\|_{\mathcal{X}}\le R}
				\|T_1(t)\chi_0\|_{\mathcal{X}}\le CR e^{-\omega t}\to 0,
				\ \ t\to\infty.
			\end{equation}
			Thus, the assumptions required in part (i) of \emph{} \cite[Proposition 3.2, (3.7)--(3.9)]{Webb79} are satisfied. Furthermore, upon setting $T_2=0$, the hypothesis in part (ii) is trivially fulfilled.
			Now we claim $\Theta(\cdot,\cdot):\mathcal{X}\to \mathcal{X}$ defined by  
			$$\Theta(\vartheta,\varrho)=\begin{pmatrix}
				0\\
				-|\vartheta|^{p-2}\vartheta-\lambda(\vartheta,\varrho)\vartheta
			\end{pmatrix}
			$$ is compact, where 
			\begin{align}\label{eqn-lambda}
			\lambda(\vartheta,\varrho)=-\|\varrho\|_{L^2(\varOmega)}^2+\|\nabla\vartheta\|_{L^2(\varOmega)}^2+\|\vartheta\|_{L^p(\varOmega)}^p.
		\end{align}
			The compactness of $\Theta(\cdot,\cdot)$ follows once we establish the 	compactness of the operator
			\[
			\widetilde{\Theta}:H_0^1(\varOmega)\times L^2(\varOmega)\to L^2(\varOmega),
			\ \ 
			\widetilde{\Theta}(\vartheta,\varrho)
			=
			-|\vartheta|^{p-2}\vartheta-\lambda(\vartheta,\varrho)\vartheta,
			\]
			which is proved in Proposition \ref{compact B}. In particular, for any
			sequence of times $t_n\to\infty$, Proposition \ref{compact B} allows us
			to extract a subsequence such that
			$
			\widetilde{\Theta}(\vartheta (t_n),\vartheta_t(t_n))
			$
			converges in $L^2(\varOmega)$. Moreover, since $\lambda(t_n)$ is a bounded
			sequence of real numbers by the   estimates given in \eqref{asym6}, the
			\emph{Bolzano--Weierstrass theorem} yields, after passing to a further
			subsequence and relabelling it, some $\lambda_*\in\mathbb{R}$ such that
		$	\lambda(t_n)\to\lambda_*$ as $n\to\infty$. 
			Thus, condition (iii) (\cite[ Eq. (3.9)]{Webb79}) is also satisfied. Consequently, by
			\emph{}, the positive orbit
			\[
			\mathcal{O}(\vartheta_0,\vartheta_1)
			=
			\left\{(\vartheta (t),\vartheta_t(t)):t\geq0\right\}
			\]
			is precompact in $\mathcal{X}$. 
		\end{proof}
		\begin{proposition}\label{compact B}
			Assume that $p$ satisfies \eqref{p range}. Define the mapping $\tilde{\Theta}:H_0^1(\varOmega)\times L^2(\varOmega)\to L^2(\varOmega)$
			by
			\begin{equation*}
				\tilde{\Theta}(u,\tilde{\rho})=-|u|^{p-2}u+\lambda(u,\tilde{\rho})u,
			\end{equation*}
			where $\lambda(u,\tilde{\rho})$ is defined in \eqref{eqn-lambda}. 
%			\begin{equation*}
%				\lambda(u,\tilde{\rho})
%				=-\|\tilde{\rho}\|_{L^2(\varOmega)}^2
%				+\|\nablau\|_{L^2(\varOmega)}^2
%				+\|\vartheta\|_{L^p(\varOmega)}^p.
%			\end{equation*}
			Then $\tilde{\Theta}$ maps bounded subsets of
			$H_0^1(\varOmega)\times L^2(\varOmega)$ into relatively compact subsets of $L^2(\varOmega)$.
		\end{proposition}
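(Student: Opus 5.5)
Take a bounded set $B\subset H_0^1(\varOmega)\times L^2(\varOmega)$, say $\|\nabla u\|_{L^2(\varOmega)}+\|\tilde\rho\|_{L^2(\varOmega)}\le R$ on $B$. We show that every sequence $(u_n,\tilde\rho_n)\in B$ has a subsequence along which $\tilde\Theta(u_n,\tilde\rho_n)$ converges in $L^2(\varOmega)$. The argument splits $\tilde\Theta$ into the local nonlinearity $-|u|^{p-2}u$ and the scalar-times-function term $\lambda(u,\tilde\rho)u$, and treats them separately.

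\textbf{Step 1: extracting a subsequence.} By Banach--Alaoglu we may pass to a subsequence with $u_n\rightharpoonup u$ weakly in $H_0^1(\varOmega)$. The compact embeddings \eqref{embedding} then give $u_n\to u$ strongly in $L^2(\varOmega)$ and in $L^q(\varOmega)$ for every subcritical $q$.

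Next, $\lambda(u_n,\tilde\rho_n)$ is bounded. Indeed, $\|\tilde\rho_n\|^2_{L^2}\le R^2$ and $\|\nabla u_n\|^2_{L^2}\le R^2$, and $\|u_n\|^p_{L^p}\le CR^p$ because $H_0^1\hookrightarrow L^p$ in the range \eqref{p range}. By Bolzano--Weierstrass we may further assume $\lambda(u_n,\tilde\rho_n)\to\lambda_*\in\mathbb{R}$.

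We do not need convergence of $\tilde\rho_n$ or of $\nabla u_n$ themselves. Only the scalar $\lambda$ has to converge, which explains why the map is compact even though it depends on $\tilde\rho$.

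\textbf{Step 2: the scalar term.} We write
\[
\lambda(u_n,\tilde\rho_n)u_n-\lambda_*u=\bigl(\lambda(u_n,\tilde\rho_n)-\lambda_*\bigr)u_n+\lambda_*(u_n-u).
\]
The first piece tends to zero because $\|u_n\|_{L^2}$ is bounded. The second tends to zero by the strong $L^2$ convergence of $u_n$. Hence $\lambda(u_n,\tilde\rho_n)u_n\to\lambda_*u$ in $L^2(\varOmega)$.

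\textbf{Step 3: the nonlinear term (main obstacle).} We need $|u_n|^{p-2}u_n\to|u|^{p-2}u$ in $L^2(\varOmega)$, i.e.\ strong convergence in $L^{2(p-1)}$-type norms. Using \eqref{identity 1} and H\"older with exponents $p-1$ and $\frac{p-1}{p-2}$,
\[
\bigl\||u_n|^{p-2}u_n-|u|^{p-2}u\bigr\|_{L^2}\le C\bigl(\|u_n\|^{p-2}_{L^{2(p-1)}}+\|u\|^{p-2}_{L^{2(p-1)}}\bigr)\|u_n-u\|_{L^{2(p-1)}}.
\]
It therefore suffices that $H_0^1(\varOmega)\hookrightarrow\hookrightarrow L^{2(p-1)}(\varOmega)$.

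For $d=1,2$ this holds for every $p$. For $d\ge3$ it requires $2(p-1)<\frac{2d}{d-2}$, i.e.\ $p<\frac{2(d-1)}{d-2}$. This is exactly the strict inequality in \eqref{p range}, and it is the point where the endpoint allowed in \eqref{eqn-restriction} must be excluded.

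With this embedding, the factor $\|u_n\|_{L^{2(p-1)}}$ is bounded and $\|u_n-u\|_{L^{2(p-1)}}\to0$. So the nonlinear term converges in $L^2(\varOmega)$.

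\textbf{Conclusion.} Combining Steps 2 and 3, $\tilde\Theta(u_n,\tilde\rho_n)\to-|u|^{p-2}u+\lambda_*u$ in $L^2(\varOmega)$. Hence $\tilde\Theta(B)$ is relatively compact in $L^2(\varOmega)$.
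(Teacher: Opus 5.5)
Your proof is correct and follows the paper's argument essentially step for step: a weak $H_0^1(\varOmega)$ limit, the compact embedding $H_0^1(\varOmega)\hookrightarrow L^{2(p-1)}(\varOmega)$ guaranteed by the strict inequality in \eqref{p range}, the pointwise estimate \eqref{identity 1} plus H\"older for the nonlinearity, and Bolzano--Weierstrass for the bounded scalar $\lambda(u_n,\tilde\rho_n)$. The differences are cosmetic: the order of the steps, and how you split $\lambda(u_n,\tilde\rho_n)u_n-\lambda_*u$. The case $p=2$, where your H\"older exponent $\frac{p-1}{p-2}$ is undefined, is trivial because the nonlinearity is then linear.
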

		\begin{proof}
			Let  $\{(\vartheta_n,\tilde{\rho}_n)\}_{n\ge 1}
			\subset H_0^1(\varOmega)\times L^2(\varOmega)
			$
			be a bounded sequence. Then there exists \(D>0\) such that
			\begin{equation*}
				\|\vartheta_n\|_{H_0^1(\varOmega)}
				+\|\tilde{\rho}_n\|_{L^2(\varOmega)}
				\le D,
				\ n\ge1.
			\end{equation*}
			Since $H_0^1(\varOmega)$ is reflexive, after extraction of a subsequence, we may assume that
			\begin{equation*}
				\vartheta_n \overset{w}{\to} u
				\ \text{ as }\ n\to\infty\ \text{ in }\ H_0^1(\varOmega).
			\end{equation*}
			By the Rellich-Kondrachov theorem \cite[Theorem 6, pg-190]{DM+DZ-98},
			\begin{equation*}
				\vartheta_n \to u
				\ \text{ strongly in }\ L^q(\varOmega) \ \text{ for any }\ 1\le q<\frac{2d}{d-2}.
			\end{equation*}
		Therefore, 	we obtain $\vartheta_n\to u \text{ strongly in }L^{2(p-1)}(\varOmega),$ since $ 2(p-1)<\frac{2d}{d-2}.$
			Using the inequality \eqref{identity 1} together with  Hölder's inequality, we get
			\begin{align*}
				\bigl\|
				|\vartheta_n|^{p-2}\vartheta_n-|u|^{p-2}u
				\bigr\|_{L^2(\varOmega)}
				&\le
				C
				\Bigl(
				\|\vartheta_n\|_{L^{2(p-1)}(\varOmega)}^{p-2}
				+
				\|\vartheta\|_{L^{2(p-1)}(\varOmega)}^{p-2}
				\Bigr)
				\|\vartheta_n-u\|_{L^{2(p-1)}(\varOmega)}
				\\
				&\to 0 \ \text{ as }\ n\to\infty.
			\end{align*}
			Hence, $
				|\vartheta_n|^{p-2}\vartheta_n\to |u|^{p-2}u \ \text{ strongly in } \ L^2(\varOmega).$
			Let us now define
			$$\lambda_n=-\|\tilde{\rho}_n\|_{L^2(\varOmega)}^2+\|\nabla \vartheta_n\|_{L^2(\varOmega)}^2+\|\vartheta_n\|_{L^p(\varOmega)}^p.$$
			Since \(\{\vartheta_n\}\) and \(\{\tilde{\rho}_n\}\) are bounded, it follows that $|\lambda_n|\le \tilde{D},$ for some $\tilde{D}>0$.
			Therefore, after passing to a subsequence if necessary, there exists
			$\lambda_*\in\R,$ such that $\lambda_n\to\lambda_* \text{ as }\ n\to\infty.$
			Since, $ \vartheta_n\to u \text{ strongly in } L^2(\varOmega),$ we have
			\begin{equation*}
				\|\lambda_n\vartheta_n-\lambda_*u\|_{L^2(\varOmega)}
				\le
				|\lambda_n|
				\,\|\vartheta_n-u\|_{L^2(\varOmega)}
				+
				|\lambda_n-\lambda_*|
				\,\|\vartheta\|_{L^2(\varOmega)}\to0\ \text{ as }\ n\to\infty.
			\end{equation*}
			Therefore, $\lambda_n\vartheta_n\to\lambda_* u\ \text{ strongly in }\ L^2(\varOmega).$
			Combining the above convergences yields
			\begin{equation*}
				\begin{aligned}
				\left\|
				\tilde{\Theta}(\vartheta_n,\tilde{\rho}_n)-\bigl(-|u|^{p-2}u+\lambda_* u\bigr)\right\|_{L^2(\varOmega)}&\le\bigl\||\vartheta_n|^{p-2}\vartheta_n-|u|^{p-2}u\bigr\|_{L^2(\varOmega)}+\|\lambda_n\vartheta_n-\lambda_* u\|_{L^2(\varOmega)}\\
					&\to 0\ \text{ as } \ n\to\infty.
				\end{aligned}
			\end{equation*}
			Hence the sequence $
			\{\tilde{\Theta}(\vartheta_n,\tilde{\rho}_n)\}$ possesses a convergent subsequence in $L^2(\varOmega)$. Therefore the image of every bounded subset of $H_0^1(\varOmega)\times L^2(\varOmega)$
			is relatively compact in \(L^2(\varOmega)\). Consequently, $\tilde{\Theta}:H_0^1(\varOmega)\times L^2(\varOmega)\to L^2(\varOmega),$ is a compact operator.
		\end{proof}
		
	\begin{remark}
		The $\lambda_*$ need not necessarily be equal to $\l$ obtained  in \eqref{eqn-limit}. Indeed, by the \emph{Bolzano--Weierstrass} theorem, there exists a convergent subsequence, and different choices of convergent subsequences may lead to different limits. Hence, $\l$ obtained from the subsequence in Lemma \ref{sub seq conv soln} need not necessarily coincide with the $\lambda_*$ obtained in Proposition \ref{compact B}.
	\end{remark}
The following proposition is motivated by \cite[Theorem~7.6.1]{AH+MAJ-15}, where the authors established the asymptotic convergence of the velocity to zero for a semilinear wave equation with linear damping and without any constraints.

		\begin{proposition}\label{convergence of ut to 0}
			As a consequence of the asymptotic precompactness of
			$\{\vartheta_t(t):t\geq0\}$ in $L^2(\varOmega)$, we have
		\begin{align}\label{eqn-conv}
				\vartheta_t(t)\to0\ \text{ strongly in }\ L^2(\varOmega)\ \text{ as }\ t\to\infty.
		\end{align}
		\end{proposition}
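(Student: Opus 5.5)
The plan is to combine the integrability $\int_0^\infty\|\vartheta_t(s)\|_{L^2(\varOmega)}^2\,ds<\infty$ from \eqref{velocity bound} with a uniform continuity property of $t\mapsto\|\vartheta_t(t)\|_{L^2(\varOmega)}^2$. Barbalat's lemma \cite[Theorem 5]{BL} then gives $\|\vartheta_t(t)\|_{L^2(\varOmega)}^2\to0$. Precompactness (Theorem \ref{apct}) supplies the uniform continuity, in the spirit of \cite[Theorem~7.6.1]{AH+MAJ-15}.

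\textbf{Step 1: uniform continuity.} Set $f(t):=\|\vartheta_t(t)\|_{L^2(\varOmega)}^2$, which is nonnegative and integrable on $(0,\infty)$. I would show that $f$ is uniformly continuous on $[0,\infty)$. The solution defines a continuous flow $S(t)$ on $\mathcal{X}=H_0^1(\varOmega)\times L^2(\varOmega)$ restricted to the closed orbit $K:=\overline{\mathcal{O}(\vartheta_0,\vartheta_1)}^{\mathcal{X}}$, and $K$ is compact by Theorem \ref{apct}. The map $(h,z)\mapsto S(h)z$ is continuous on $[0,1]\times K$. This follows from the Duhamel formula of Theorem \ref{apct} together with the Lipschitz bounds on $\Theta$ over $\mathcal{X}$-bounded sets, which hold because $p$ satisfies \eqref{p range} and the Sobolev embedding then gives local Lipschitz continuity of $|u|^{p-2}u$ from $H_0^1$ to $L^2$. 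A continuous map on a compact set is uniformly continuous, so
\begin{equation*}
\sup_{t\ge0}\|(\vartheta,\vartheta_t)(t+h)-(\vartheta,\vartheta_t)(t)\|_{\mathcal{X}}\to0 \quad\text{as } h\to0^+ .
\end{equation*}
Since $\|\vartheta_t\|_{L^2(\varOmega)}$ is bounded by \eqref{asym6}, it follows that $f$ is uniformly continuous.

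\emph{Simpler alternative for Step 1.} One can avoid the flow argument. By \eqref{asym1}, $\vartheta_{tt}=-\gamma\vartheta_t-\A\vartheta-|\vartheta|^{p-2}\vartheta+\lambda\vartheta$ is bounded in $L^\infty(0,\infty;H^{-1}(\varOmega))$ by \eqref{asym6}. Hence $\vartheta_t$ is Lipschitz into $H^{-1}(\varOmega)$. Precompactness of $\{\vartheta_t(t)\}$ in $L^2(\varOmega)$ upgrades this to uniform continuity in $L^2(\varOmega)$, by interpolating on the compact set: on a compact subset of $L^2$, $H^{-1}$-closeness forces $L^2$-closeness uniformly.

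\textbf{Step 2: conclusion.} Apply Barbalat's lemma to the nonnegative, uniformly continuous, integrable $f$ to get $f(t)\to0$, which is \eqref{eqn-conv}.

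\textbf{Main obstacle.} The only delicate point is Step 1, namely deriving uniform continuity from precompactness. I would use the $H^{-1}$-Lipschitz argument, which rests on a short compactness-contradiction lemma. Suppose $t_n$ and $h_n\to0$ satisfy $\|\vartheta_t(t_n+h_n)-\vartheta_t(t_n)\|_{L^2(\varOmega)}\ge\epsilon$. Extract subsequences converging in $L^2(\varOmega)$, which is possible by precompactness. Their $H^{-1}$ limits coincide because of the Lipschitz bound, so the $L^2$ limits coincide as well, and this is a contradiction.
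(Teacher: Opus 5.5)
Your proof is correct. It uses the same three ingredients as the paper: the bound $\vartheta_{tt}\in L^\infty(0,\infty;H^{-1}(\varOmega))$, the integrability $\vartheta_t\in L^2(0,\infty;L^2(\varOmega))$, and the precompactness of Theorem \ref{apct}. You combine them in the opposite order.

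The paper applies Barbalat's lemma first, at the $H^{-1}$ level. There the bound on $\vartheta_{tt}$ makes $\|\vartheta_t\|_{H^{-1}(\varOmega)}^2$ Lipschitz with no further work, giving $\vartheta_t(t)\to0$ in $H^{-1}(\varOmega)$. Compactness enters only at the end, to force every $L^2$-limit point of $\vartheta_t(t_n)$ to be $0$. Its intermediate weak-convergence step is actually unnecessary, since an $L^2$-limit is also an $H^{-1}$-limit.

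You instead use compactness up front. Your contradiction lemma upgrades $H^{-1}$-Lipschitz continuity to uniform continuity of $t\mapsto\vartheta_t(t)$ in $L^2(\varOmega)$, and you then apply the classical scalar Barbalat lemma to $\|\vartheta_t\|_{L^2(\varOmega)}^2$. The paper's order needs no auxiliary lemma. Yours isolates a reusable principle (weak-norm equicontinuity plus strong-norm precompactness gives strong-norm uniform continuity) and applies Barbalat directly to the dissipated quantity.

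Your first variant of Step 1 is weaker as written. Defining $S(h)$ on the closed orbit $K$ requires well-posedness and continuous dependence for data in $H_0^1(\varOmega)\times L^2(\varOmega)$. The paper does not prove this: Theorem \ref{wdss} covers only $D(\A)\times H_0^1(\varOmega)$ data, and $K$ contains limit points outside that class.

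That variant can be repaired without such a theory. Write $U(t+h)-U(t)=(e^{Gh}-I)U(t)+\int_0^h e^{G(h-s)}\Theta(U(t+s))\,ds$ for $U=(\vartheta,\vartheta_t)$. The integral is $O(h)$ uniformly in $t$, and strong continuity of $e^{Gh}$ is uniform on precompact sets. The $H^{-1}$ route you chose makes this unnecessary anyway.
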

		\begin{proof}
		By \eqref{velocity asym conv}, there exists a subsequence $\{\vartheta_t(t_n)\}_{n=1}^\infty$ converging strongly to $0$ in $L^2(\varOmega)$. We claim that the above estimate holds uniformly as $t\to\infty$. To this end, recalling \eqref{asym5}, we obtain
			\begin{equation}\label{asym u and ut in H and L}
				\vartheta\in L^\infty(0,\infty;H^1_0(\varOmega))
				\ \text{ and }\ 
				\vartheta_t\in L^{\infty}(0,\infty;L^2(\varOmega))\cap L^2(0,\infty;L^2(\varOmega)),
			\end{equation}
	and 	 it follows that  $\vartheta,\vartheta_t\in  L^\infty(0,\infty;L^2(\varOmega)) \hookrightarrow L^\infty(0,\infty;H^{-1}(\varOmega)).$
			Moreover, since $p$ satisfies the range specified in \eqref{p range},
			the Sobolev embedding \eqref{embedding} yields $ L^\infty(0,\infty;L^{2p-2}(\varOmega))
			\hookrightarrow L^\infty(0,\infty;H^{-1}(\varOmega)).$
			Consequently, $|\vartheta|^{p-2}\vartheta\in L^\infty(0,\infty;H^{-1}(\varOmega)). $
			Furthermore, since the operator
			$\A:H^1_0(\varOmega)\to H^{-1}(\varOmega)$ is continuous, there exists a
			constant $K_{17}>0$ such that
		\begin{equation*}
			\|\A\vartheta\|_{H^{-1}(\varOmega)}
			\le K_{17}\|\vartheta\|_{H^1_0(\varOmega)},
			\end{equation*}
			and hence $\A\vartheta\in L^\infty(0,\infty;H^{-1}(\varOmega)).$ Therefore,
			\begin{equation}\label{asym utt bound}
				\vartheta_{tt}=-\A\vartheta-\gamma \vartheta_t-|\vartheta|^{p-2}\vartheta+\Bigl(-\|\vartheta_t\|_{L^2(\varOmega)}^2+\|\nabla\vartheta\|_{L^2(\varOmega)}^2+\|\vartheta\|_{L^p(\varOmega)}^p\Bigr)\vartheta\in L^\infty(0,\infty;H^{-1}(\varOmega)).
			\end{equation}
			Since $\vartheta_t\in L^2(0,\infty;L^2(\varOmega)) \hookrightarrow L^2(0,\infty;H^{-1}(\varOmega))$ and
			$\vartheta_{tt}\in L^\infty(0,\infty;H^{-1}(\varOmega))$, by \emph{Barbalat's lemma}
			\cite[Theorem 5]{BL}, we obtain
			\begin{equation}\label{H^-1 limit}
				\vartheta_t(t)\to 0\ \text{ in }H^{-1}(\varOmega)\  \text{ as }\ t\to\infty .
			\end{equation}
			Let $\{t_n\}_{n=1}^{\infty}$ be an arbitrary sequence such that $t_n\to\infty$ as $n\to\infty$. Since $\vartheta_t(t_n)$ is bounded in $L^2(\varOmega)$ (see \eqref{asym6}), there exists a subsequence, still denoted by $\{t_n\}_{n=1}^{\infty}$, and $y\in L^2(\varOmega)$ such that
			\begin{equation}
				\vartheta_t(t_n)\overset{w}{\to} y\ \text{ in }L^2(\varOmega)\ \text{ as }\ n\to\infty.
			\end{equation}
			Since $L^2(\varOmega)\hookrightarrow\hookrightarrow H^{-1}(\varOmega)$, we also have
		$	\vartheta_t(t_n)\to y \text{ in }H^{-1}(\varOmega)\ \text{ as }\ n\to\infty.$
			On the other hand, from \eqref{H^-1 limit}, we infer 
			\begin{equation}
				\vartheta_t(t_n)\to 0\ \text{ in }\ H^{-1}(\varOmega)\ \text{ as }\ n\to\infty.
			\end{equation}
			By the uniqueness of limits in $H^{-1}(\varOmega),$ we obtain $y=0$. Therefore every weak accumulation point of ${\vartheta_t}(\cdot)\ \text{in}\ L^2(\varOmega)$ is zero, and consequently
			\begin{equation}\label{weak con ut 0}
				\vartheta_t(t)\overset{w}{\to}0 \ \text{ in }\ L^2(\varOmega) \ \text{ as }\ t\to\infty.
			\end{equation}
		The asymptotic  precompactness (see Theorem \ref{apct}) and  the fact that $\vartheta \in L^\infty(0,\infty;L^2(\varOmega))$ imply that for any arbitrary  sequence $\{t_n\}_{n=1}^\infty$ such that $t_n\to\infty$ as $n\to\infty$, there exists a subsequence, still denoted by $\{t_n\}_{n=1}^{\infty}$ and $v\in L^2(\varOmega)$ such that
			\begin{equation}
				\vartheta_t(t_n)\to v\ \text{ in }\ L^2(\varOmega)\  \text{ as }\ n\to\infty.
			\end{equation}
			By using  result  \eqref{weak con ut 0}, we have $v=0$ and the conclusion \eqref{eqn-conv} follows. 
		\end{proof}
		The convergence in \eqref{eqn-conv} shows that the velocity vanishes asymptotically in $L^2(\varOmega)$. To further analyze the asymptotic behavior of the solution, we consider time-translated trajectories. This will allow us to establish the strong convergence in $H_0^1(\varOmega)$  of a suitable subsequence of the sequence obtained in Lemma \ref{sub seq conv soln}  and to identify the equation satisfied by its limit.
\begin{lemma}\label{lem:uniform_shift}
	Let $\tau>0$ and let $\{t_n\}_{n=1}^{\infty}$ be the sequence introduced in Lemma \ref{sub seq conv soln}. Define the time-translated functions
	\begin{equation}\label{uk(t) def}
		\vartheta_n(t):=\vartheta(t+t_n),\ t\in[0,\tau].
	\end{equation}
	Then, for $p$ satisfying \eqref{p range}, we obtain 
	\begin{equation}\label{eqn-conv-1}
		\sup_{t\in[0,\tau]}\|\vartheta_n(t)-\vartheta_\infty\|_{L^p(\varOmega)}\to0\ \text{ as }\ n\to\infty.
	\end{equation}
	Moreover, we have 
		\begin{equation}\label{uk limit asym H^1_0}
		\sup_{t\in[0,\tau]}\|\vartheta_n(t)-\vartheta_\infty\|_{H_0^1(\varOmega)}\to 0\ \text{ as }\ n\to\infty,
	\end{equation}
\end{lemma}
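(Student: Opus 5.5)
The argument combines the precompactness of the orbit (Theorem \ref{apct}) with the vanishing of the velocity (Proposition \ref{convergence of ut to 0}). The plan is to first show that $\vartheta(t_n)\to\vartheta_\infty$ strongly in $H_0^1(\varOmega)$ along the sequence of Lemma \ref{sub seq conv soln}, and then to propagate this to the whole window $[t_n,t_n+\tau]$ by controlling the time variation through $\vartheta_t$.

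\textbf{Step 1 (strong convergence at $t_n$).} By Theorem \ref{apct} the set $\{\vartheta(t):t\ge0\}$ is precompact in $H_0^1(\varOmega)$. Hence every subsequence of $\vartheta(t_n)$ has a further subsequence converging strongly in $H_0^1(\varOmega)$. Its limit must coincide with the weak limit $\vartheta_\infty$ from \eqref{weak asym conv}, so the whole sequence satisfies $\vartheta(t_n)\to\vartheta_\infty$ strongly in $H_0^1(\varOmega)$.

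\textbf{Step 2 (uniform control on the window).} For $t\in[0,\tau]$ we write
\[
\vartheta_n(t)-\vartheta(t_n)=\int_{t_n}^{t_n+t}\vartheta_t(s)\,ds ,
\]
so that
\[
\|\vartheta_n(t)-\vartheta(t_n)\|_{L^2(\varOmega)}\le \tau\sup_{s\ge t_n}\|\vartheta_t(s)\|_{L^2(\varOmega)}\to0
\]
by Proposition \ref{convergence of ut to 0}. Alternatively, Cauchy--Schwarz together with \eqref{velocity bound} gives the bound $\sqrt{\tau}\,\bigl(\int_{t_n}^\infty\|\vartheta_t\|^2\,ds\bigr)^{1/2}$. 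Combined with Step 1, this yields $\sup_{[0,\tau]}\|\vartheta_n(t)-\vartheta_\infty\|_{L^2(\varOmega)}\to0$.

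\textbf{Step 3 (upgrading from $L^2$ to $H_0^1$).} The main obstacle is to pass from uniform $L^2$ convergence to uniform $H_0^1$ convergence, because $\nabla\vartheta_t$ is not controlled uniformly in time. Here precompactness is used again, via a contradiction argument. Suppose there are $\epsilon>0$ and $s_n\in[0,\tau]$, along a subsequence, with
\[
\|\vartheta(t_n+s_n)-\vartheta_\infty\|_{H_0^1(\varOmega)}\ge\epsilon .
\]
By Theorem \ref{apct}, after passing to a further subsequence, $\vartheta(t_n+s_n)\to w$ in $H_0^1(\varOmega)$. Step 2 gives $\vartheta(t_n+s_n)\to\vartheta_\infty$ in $L^2(\varOmega)$, so $w=\vartheta_\infty$, which is a contradiction. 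This establishes \eqref{uk limit asym H^1_0}.

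\textbf{Step 4 ($L^p$ convergence).} The convergence \eqref{eqn-conv-1} follows from \eqref{uk limit asym H^1_0} and the continuous embedding $H_0^1(\varOmega)\hookrightarrow L^p(\varOmega)$, which is valid for $p$ in \eqref{p range} by \eqref{embedding}. If one prefers not to rely on the $H_0^1$ step, an alternative is the same compactness-plus-contradiction argument as in Step 3, carried out in $L^p(\varOmega)$ using the compact embedding.
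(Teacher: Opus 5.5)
Your proof is correct and is essentially the paper's argument. You get uniform $L^2(\varOmega)$ control on $[t_n,t_n+\tau]$ by integrating $\vartheta_t$ (the paper uses exactly your Cauchy--Schwarz bound with \eqref{velocity bound}), and then the same contradiction argument via the precompactness of Theorem \ref{apct} upgrades this to uniform $H_0^1(\varOmega)$ convergence.

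The differences are minor. Your Step 1 is unnecessary, since $\vartheta(t_n)\to\vartheta_\infty$ in $L^2(\varOmega)$ already follows from \eqref{Lp asym conv}. The paper also proves \eqref{eqn-conv-1} before the $H_0^1$ statement, using the interpolation $\|f\|_{L^p(\varOmega)}^p\le\|f\|_{L^2(\varOmega)}\|f\|_{L^{2(p-1)}(\varOmega)}^{p-1}$, which needs only the energy bound and not precompactness; you instead read it off afterwards from the Sobolev embedding.
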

\begin{proof}
	For every $t\in[0,\tau]$, using the triangle inequality and the H\"older's inequality, we infer 
	\begin{equation}\label{eqn-first}
		\begin{aligned}
			\|\vartheta_n(t)-\vartheta_\infty\|_{L^2(\varOmega)}
			&\leq \|\vartheta_n(t)-\vartheta (t_n)\|_{L^2(\varOmega)}
			+\|\vartheta (t_n)-\vartheta_\infty\|_{L^2(\varOmega)}
			\\
			&\leq \int_{t_n}^{t_n+\tau}\|\vartheta_t(s)\|_{L^2(\varOmega)}ds+\|\vartheta (t_n)-\vartheta_\infty\|_{L^2(\varOmega)}\\&
			\leq \sqrt{\tau}
			\left(\int_{t_n}^{t_n+\tau}\|\vartheta_t(s)\|_{L^2(\varOmega)}^2  ds\right)^{1/2}
			+\|\vartheta (t_n)-\vartheta_\infty\|_{L^2(\varOmega)}.
		\end{aligned}
	\end{equation}
		Since \eqref{velocity bound} holds, the first term tends to zero, while
		\eqref{Lp asym conv} implies that the second term tends to zero. Hence,
	\begin{equation}\label{uk limit asym}
		\sup_{t\in[0,\tau]}\|\vartheta_n(t)-\vartheta_\infty\|_{L^2(\varOmega)}\to 0 \ \text{ as } \ n\to\infty.
	\end{equation}
	Moreover, by Hölder's inequality,
	\begin{equation}
		\|\vartheta_n(t)-\vartheta_\infty\|_{L^p(\varOmega)}^p
		\leq
		\|\vartheta_n(t)-\vartheta_\infty\|_{L^2(\varOmega)}
		\|\vartheta_n(t)-\vartheta_\infty\|_{L^{2(p-1)}(\varOmega)}^{p-1}.
	\end{equation}
	Since $\{\vartheta_n\}$ is uniformly bounded in $H_0^1(\varOmega)$, \eqref{asym6}
	and the Sobolev embedding \eqref{embedding} imply that
	$
	\|\vartheta_n(t)-\vartheta_\infty\|_{L^{2(p-1)}(\varOmega)}
$
	is uniformly bounded for $p$ satisfying \eqref{p range}. Hence, by
	\eqref{uk limit asym}, we conclude that \eqref{eqn-conv-1} holds.

%		\begin{remark}\label{conv in lp}
%			By Hölder's inequality,	$\|\vartheta_n(t)-\vartheta_\infty\|_{L^p(\varOmega)}^p\leq\|\vartheta_n(t)-\vartheta_\infty\| _{L^2(\varOmega)}\|\vartheta_n(t)-\vartheta_\infty\|_{L^{2(p-1)}}^{p-1}.$
%			Since, $\{\vartheta_n\}$ is uniformly bounded in $H_0^1(\varOmega)$ as in \eqref{asym6}, the Sobolev embedding \eqref{embedding} with \eqref{p range} gives, 
%		$\sup_{t\in[0,\tau]}\|\vartheta_n(t)-\vartheta_\infty\|_{L^p(\varOmega)}\to 0,\ \text{ as } k\to\infty.$
%		\end{remark}
%		\begin{lemma}\label{asym  of u in H10}
%			By theorem \eqref{aymp precomp theorem} and Lemma \eqref{lem:uniform_shift}, we have
%			\begin{equation}\label{uk limit asym H^1_0}
%				\sup_{t\in[0,\tau]}\|\vartheta_n(t)-\vartheta_\infty\|_{H_0^1(\varOmega)}\to 0,\ \text{ as }\ n\to\infty,
%			\end{equation}
%			where,	$\vartheta_n(t):=u(t_n+t).$
%			\end{lemma}
		
				Assume, by contradiction, that \eqref{uk limit asym H^1_0} does not hold.
				Then there exist $\varepsilon>0$, a subsequence (still denoted by
				$\vartheta_n$), and points $s_n\in[0,\tau]$ such that
				\begin{equation}\label{contra_shift}
					\|\vartheta_n(s_n)-\vartheta_\infty\|_{H_0^1(\varOmega)}=\|\vartheta(t_n+s_n)-\vartheta_\infty\|_{H_0^1(\varOmega)}
					\ge \varepsilon,\ \text{ for all }\ n.
				\end{equation}
				Since $[0,\tau]$ is compact, there exists a subsequence, again denoted by
				$s_n$, and a point $s_*\in[0,\tau]$ such that $$s_n\to s_*\ \text{ as }\ n\to\infty.$$
				By Theorem \ref{apct}, the semiflow is asymptotically
				precompact in $H_0^1(\varOmega)\times L^2(\varOmega)$. Hence, the sequence $\vartheta(t_n+s_n)$
				admits a subsequence, still denoted by $\vartheta(t_n+s_n)$, such that
				\begin{equation*}
					\vartheta(t_n+s_n)\to v\ \text{ as }\ n\to\infty \ \text{ strongly in }\ H_0^1(\varOmega),
				\end{equation*}
				for some $v\in H_0^1(\varOmega)$. Since the embedding $H_0^1(\varOmega)\hookrightarrow L^2(\varOmega)$ is continuous, we also have
				\begin{equation}\label{L2limitv}
					\vartheta(t_n+s_n)\to v\ \text{ strongly in }L^2(\varOmega).
				\end{equation}
		As discussed above, the convergence of the first term in \eqref{eqn-first} yields
				\begin{equation}\label{shiftL2}
					\|\vartheta(t_n+s_n)-\vartheta (t_n)\|_{L^2(\varOmega)}
					\to0.
				\end{equation}
				Since, $\vartheta (t_n)\to \vartheta_\infty
				\ \text{ in }L^2(\varOmega)$ (see \eqref{Lp asym conv}), 
				it follows from \eqref{shiftL2} that
				\begin{equation*}
					\vartheta(t_n+s_n)
					\to \vartheta_\infty
					\ \text{ in }\ L^2(\varOmega).
				\end{equation*}
				Combining this with \eqref{L2limitv} and using the uniqueness of limits in
				$L^2(\varOmega)$, we conclude that, $v=\vartheta_\infty.$ Therefore,
				\begin{equation*}
					\vartheta(t_n+s_n)\to \vartheta_\infty\ \text{ strongly in }H_0^1(\varOmega),
				\end{equation*}
				which contradicts \eqref{contra_shift}. Hence, 
				\eqref{uk limit asym H^1_0} follows.
		\end{proof}

		\begin{lemma}\label{lamda t+tk conv}
		Let $\{t_n\}_{n=1}^\infty$ be the sequence obtained  in Lemma \ref{sub seq conv soln}, with $t_n\to\infty$. Then there exists $\l\in\mathbb{R}$, as defined in \eqref{l def}, such that
		\begin{equation}\label{eqn-conv-2}
			\lambda(t+t_n)\to\l \ \text{ as }\ n\to\infty,\ \text{ for every }\ t\in[0,\tau].
		\end{equation}
	%	for every $t\in[0,\tau]$.
		\end{lemma}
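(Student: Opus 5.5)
The plan is to write $\lambda(t+t_n)$ via its definition \eqref{def lam asym} as
\[
\lambda(t+t_n)=-\|\vartheta_t(t+t_n)\|_{L^2(\varOmega)}^2+\|\nabla\vartheta_n(t)\|_{L^2(\varOmega)}^2+\|\vartheta_n(t)\|_{L^p(\varOmega)}^p,
\]
and then pass to the limit in each of the three terms separately. I expect to get uniform convergence on $[0,\tau]$, which is stronger than the pointwise statement \eqref{eqn-conv-2}.

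\textbf{Velocity term.} By Proposition~\ref{convergence of ut to 0}, $\vartheta_t(s)\to0$ in $L^2(\varOmega)$ as $s\to\infty$. Since $t+t_n\ge t_n\to\infty$ for every $t\in[0,\tau]$, this gives $\sup_{t\in[0,\tau]}\|\vartheta_t(t+t_n)\|_{L^2(\varOmega)}\to0$.

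\textbf{Gradient and $L^p$ terms.} Lemma~\ref{lem:uniform_shift} gives $\vartheta_n(t)\to\vartheta_\infty$ uniformly on $[0,\tau]$ in both $H_0^1(\varOmega)$ and $L^p(\varOmega)$. The maps $v\mapsto\|\nabla v\|_{L^2}^2$ and $v\mapsto\|v\|_{L^p}^p$ are locally Lipschitz. Using $|a^2-b^2|\le(|a|+|b|)|a-b|$ and $|a^p-b^p|\le p\max(a,b)^{p-1}|a-b|$, together with the uniform bounds \eqref{asym6}, we get
\[
\lambda(t+t_n)\to\|\nabla\vartheta_\infty\|_{L^2(\varOmega)}^2+\|\vartheta_\infty\|_{L^p(\varOmega)}^p,
\]
uniformly for $t\in[0,\tau]$.

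\textbf{Identifying the limit.} It remains to check that this limit equals $\lambda_\infty$ as defined in \eqref{l def}. Take $t=0$. By \eqref{eqn-conv-4}, $\lambda_\infty=\lim_n\|\nabla\vartheta(t_n)\|_{L^2}^2+\|\vartheta_\infty\|_{L^p}^p$. The $H_0^1$ convergence from Lemma~\ref{lem:uniform_shift} at $t=0$ gives $\|\nabla\vartheta(t_n)\|_{L^2}^2\to\|\nabla\vartheta_\infty\|_{L^2}^2$, so the two expressions agree. Equivalently, \eqref{Enery asym 2} yields $2E_\infty=\|\nabla\vartheta_\infty\|_{L^2}^2+\frac2p\|\vartheta_\infty\|_{L^p}^p$, which makes this consistent with $\lambda_\infty=2E_\infty+\frac{p-2}{p}\|\vartheta_\infty\|_{L^p}^p$.

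\textbf{Main point of care.} This last identification is where attention is needed. The weak $H_0^1$ limit in Lemma~\ref{sub seq conv soln} alone would only give $\|\nabla\vartheta_\infty\|_{L^2}^2\le\liminf\|\nabla\vartheta(t_n)\|_{L^2}^2$. The strong convergence supplied by Lemma~\ref{lem:uniform_shift}, which ultimately comes from the precompactness in Theorem~\ref{apct}, is what upgrades this to equality. A further check is that the subsequence relabellings made in Lemma~\ref{sub seq conv soln} and Lemma~\ref{lem:uniform_shift} are compatible. They are, since all statements are made along the same (already extracted) sequence $\{t_n\}$. Moreover, the limit we obtain is determined by $\vartheta_\infty$ alone, so no further extraction is needed.
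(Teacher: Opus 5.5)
Your proof is correct, but it handles the gradient term differently from the paper. The paper never uses the strong $H_0^1$ part of Lemma \ref{lem:uniform_shift}. It uses the monotone energy limit \eqref{Energy tends}, so that $E(\vartheta(t+t_n),\vartheta_t(t+t_n))\to E_\infty$. It then subtracts the velocity term, which vanishes by \eqref{eqn-conv}, and the $L^p$ term, which converges by \eqref{eqn-conv-1}. This gives $\lim_n\|\nabla\vartheta(t+t_n)\|_{L^2(\varOmega)}^2=2E_\infty-\frac2p\|\vartheta_\infty\|_{L^p(\varOmega)}^p$. Substituting into $\lambda(t+t_n)$ produces exactly the expression $2E_\infty+\frac{p-2}{p}\|\vartheta_\infty\|_{L^p(\varOmega)}^p$ that defines $\lambda_\infty$ in \eqref{l def}, so no identification step is needed. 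You instead take the heavier input, namely uniform strong $H_0^1$ convergence of the shifted trajectory, which rests on the precompactness in Theorem \ref{apct}. You then have to match $\|\nabla\vartheta_\infty\|_{L^2(\varOmega)}^2+\|\vartheta_\infty\|_{L^p(\varOmega)}^p$ with \eqref{l def} via \eqref{eqn-conv-4} at $t=0$, and you do this correctly. The paper's route is lighter: it needs only $L^p$ convergence, velocity decay and energy monotonicity. Your route gives the limit directly in the form $\|\nabla\vartheta_\infty\|_{L^2(\varOmega)}^2+\|\vartheta_\infty\|_{L^p(\varOmega)}^p$ appearing in Theorem \ref{thm-strong}, and it makes the strong convergence of $\vartheta(t_n)$ there immediate, so the later Radon--Riesz step becomes redundant. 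You also state convergence uniformly on $[0,\tau]$, which is what the $I_5$ estimate in the proof of Theorem \ref{thm-strong} actually uses. The paper's energy argument yields this uniformity too, since $E_\infty\le E(\vartheta(t+t_n),\vartheta_t(t+t_n))\le E(\vartheta(t_n),\vartheta_t(t_n))$, but the lemma as stated only asserts pointwise convergence.
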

		\begin{proof}
			By the definition of $\lambda(\cdot)$  given in \eqref{def lam asym}, we have 
			\begin{equation}\label{eqn-lambda-n}
				\lambda(t+t_n)=-\|\vartheta_t(t+t_n)\|_{L^2(\varOmega)}^2+\|\nabla \vartheta(t+t_n)\|_{L^2(\varOmega)}^2+\|\vartheta(t+t_n)\|_{L^p(\varOmega)}^p,
			\end{equation}
		for $t\in[0,\tau]$.	By the help of \eqref{eqn-conv}, we clearly have $-\|\vartheta_t(t+t_n)\|_{L^2(\varOmega)}^2\to 0\ \text{ as }\ n\to\infty$. We infer from \eqref{Energy tends} that  $E(\vartheta (t),\vartheta_t(t))\searrow\E$.  Now using the result \eqref{Lp asym conv} and \eqref{eqn-conv}, we get
			\begin{align}
	E_{\infty}&=	\lim_{n\to\infty} E(\vartheta_n(t),(\vartheta_n)_t(t))\nonumber	\\&	= \lim_{n\to\infty} \left[\frac12\|\vartheta_t(t+t_n)\|_{L^2(\varOmega)}^2+\frac12\|\nabla \vartheta(t+t_n)\|_{L^2(\varOmega)}^2+\frac1p\|\vartheta(t+t_n)\|_{L^p(\varOmega)}^p,\right]	
		\nonumber\\&=  \lim_{n\to\infty}\frac{1}{2}\|\nabla \vartheta(t+t_n)\|_{L^2(\varOmega)}^2+\frac{1}{p}\|\vartheta_\infty\|^p_{L^p(\varOmega)}. 
	\end{align}
Therefore, it is immediate that 
\begin{align}\label{nabla u asym}
				\lim_{n\to\infty}\|\nabla \vartheta(t+t_n)\|_{L^2(\varOmega)}^2=2\E-\frac{2}{p}\|\vartheta_\infty\|^p_{L^p(\varOmega)}.
				\end{align}
			Now taking $n\to\infty$ in \eqref{eqn-lambda-n} and using \eqref{nabla u asym}, we get
			\begin{equation*}
				\begin{aligned}
					\lim_{n\to\infty}\lambda(t+t_n)&=2\E-\frac{2}{p}\|\vartheta_\infty\|^p_{L^p(\varOmega)}+\|\vartheta_\infty\|^p_{L^p(\varOmega)},\\
					&=2\E +\frac{p-2}{p}\|\vartheta_\infty\|^p_{L^p(\varOmega)}\\
					&=\l,
				\end{aligned}
			\end{equation*}
		where we have used \eqref{l def} also, and the required convergence \eqref{eqn-conv-2} follows. 
		\end{proof}

	\subsection{Proof of Theorem \ref{thm-strong}}\label{sub-sec-asy}
	With the help of Lemmas \ref{sub seq conv soln}, \ref{lem:uniform_shift} and \ref{lamda t+tk conv},	let us now provide a proof of Theorem \ref{thm-strong}. 
		\begin{proof}[Proof of Theorem \ref{thm-strong}]  Under the assumptions of Theorem~\ref{thm-strong}, let
			$\{t_n\}_{n\in\mathbb{N}}$ be the sequence obtained in
			Lemma~\ref{sub seq conv soln} such that
			$t_n\to\infty$ as $n\to\infty$.
			Let $\varphi\in H^1_0(\varOmega)$ and $\tau>0$ be fixed. Clearly the function $\vartheta_n(\cdot)$ as defined in \eqref{uk(t) def} satisfies
			\begin{equation}\label{uk(t) asym eqn}
				(\vartheta_n)_{tt}+\g (\vartheta_n)_t+\mathcal{A} \vartheta_n+|\vartheta_n|^{p-2}\vartheta_n=\lambda(t+t_n)\vartheta_n \ \text{ in } \ H^{-1}(\varOmega). 
			\end{equation}
			Now testing \eqref{uk(t) asym eqn} with $\varphi$ and on integrating with respect to time over $[0,\tau]$, we get
			\begin{equation}\label{uk(t) asym eqn 2}
				\begin{aligned}
					&\underbrace{\int_{0}^{\tau}
						\left\langle (\vartheta_n)_{tt}(s), \varphi \right\rangle \, ds}_{\coloneqq I_1}
					+\underbrace{\g \int_{0}^{\tau}
						\bigl((\vartheta_n)_t(s), \varphi\bigr)\, ds}_{\coloneqq I_2} \\
					&\quad
					+\underbrace{\int_{0}^{\tau}\int_{\varOmega}
						\nabla \vartheta_n(s)\cdot \nabla\varphi \, dx\, ds}_{\coloneqq I_3}
					+\underbrace{\int_{0}^{\tau}\int_{\varOmega}
						|\vartheta_n(s)|^{p-2}\vartheta_n(s)\varphi \, dx\, ds}_{\coloneqq I_4} \\
					&=
					\underbrace{\int_{0}^{\tau}
						\lambda(t+t_n)
						\int_{\varOmega}
						\vartheta_n(s)\varphi \, dx\, ds}_{\coloneqq I_5} .
				\end{aligned}
			\end{equation}
			\textbf{$I_1$:} By \emph{Lions--Magenes} lemma and using \eqref{eqn-conv}, we get
			\begin{equation}
				\begin{aligned}
					\left |\int_{0}^{\tau}
					\left\langle (\vartheta_n)_{tt}(s), \varphi \right\rangle \, ds\right|&\le\big|\langle (\vartheta_n)_t(\tau),\varphi\rangle\big| + \big|\langle (\vartheta_n)_t(0),\varphi\rangle\big|\\
					&\le 2\sup_{t\in[0,\tau]}\norm{\vartheta_t(t+t_n)}_{H^{-1}(\varOmega)}\|\varphi\|_{H^1_0(\varOmega)}\to0 \ \text{ as }\ n\to\infty.
				\end{aligned}	
			\end{equation}
			\textbf{$I_2$:} Using the Cauchy-Schwatz inequality and equation \eqref{eqn-conv}, we find
			\begin{equation}
				\begin{aligned}
					\g\left| \int_{0}^{\tau}
					\bigl((\vartheta_n)_t(s), \varphi\bigr)\, ds\right|&\le\g \int_{0}^{\tau}
					\bigl|((\vartheta_n)_t(s), \varphi)\bigr|\, ds\\
					&\le\g\tau\sup_{t\in[0,\tau]}\|(\vartheta_t(t+t_n)\| _{L^2(\varOmega)}\|\varphi\| _{L^2(\varOmega)}\to 0 \ \text{ as }\ n\to\infty.
				\end{aligned}
			\end{equation}
			\textbf{$I_3,I_4$:} By the strong convergences given in Lemma \ref{lem:uniform_shift}, we infer 
			\begin{equation}
				\begin{aligned}
					\int_{0}^{\tau}\int_{\varOmega}
					\nabla \vartheta_n(s)\cdot \nabla\varphi \, dx\, ds& \to \int_{0}^{\tau}\int_{\varOmega}
					\nabla\vartheta_\infty(s)\cdot \nabla\varphi \, dx\, ds\ \text{ as }\ n\to\infty,\\
					\int_{0}^{\tau}\int_{\varOmega}
					|\vartheta_n(s)|^{p-2}\vartheta_n(s)\varphi \, dx\, ds &\to \int_{0}^{\tau}\int_{\varOmega}
					|\vartheta_\infty(s)|^{p-2}\vartheta_\infty(s)\varphi \, dx\, ds\ \text{ as }\ n\to\infty.
				\end{aligned}
			\end{equation}
			\textbf{$I_5$:} By using Lemma \ref {lamda t+tk conv} and with the help of \eqref{Norm =1}, we deduce 
			\begin{equation}		
				\begin{aligned}
				&	\left|\int_{0}^{\tau}\lambda(t+t_n)\int_{\varOmega}\vartheta_n(s)\varphi \, dx\, ds-\tau\l\int_{\varOmega}\vartheta_\infty(s)\varphi \, dx\right|
				\\&\le
					\left|\int_{0}^{\tau}(\lambda(t+t_n)-\l)\int_{\varOmega}\vartheta_n(s)\varphi \, dx\, ds\right|
					 +\tau \left|\l\int_{\varOmega}(\vartheta_n-\vartheta_\infty)\ dx\right|\\
					&\le \tau\sup_{t\in[0,\tau]}|\lambda(t+t_n)-\l|\|\varphi\|_{L^2(\varOmega)}
					 +\tau\bigl|\varOmega\bigr |^{\frac{1}{2}}|\l|\sup_{t\in[0,\tau]}
					\|\vartheta_n(t)-\vartheta_\infty\|_{L^2(\varOmega)}\\
					&	\to 0\ \text{ as }\ n\to\infty.
				\end{aligned}
			\end{equation}
			From all the above estimates for $I_1\ \text{to}\ I_5$, we get 
			\begin{equation}
				\mathcal{A}\vartheta_\infty+|\vartheta_\infty|^{p-2}\vartheta_\infty=\l\vartheta_\infty
				\quad\text{ in }\ H^{-1}(\varOmega).
			\end{equation}
			Therefore, from  \eqref{Lp asym conv} and \eqref{Norm =1}, we clearly have 
			$\|\vartheta_\infty\| _{L^2(\varOmega)}=1.$
			Testing by $\vartheta_\infty$ in equation \eqref{stationary pde} and using $\|\vartheta_\infty\| _{L^2(\varOmega)}=1$ , we get
			\begin{equation}
				\|\nabla\vartheta_\infty\|^2 _{L^2(\varOmega)}+\|\vartheta_\infty\|^p_{L^p(\varOmega)}=\l.
			\end{equation}			
			From \eqref{eqn-conv-4}, we infer   $	\l=\lim_{n\to\infty}\|\nabla \vartheta (t_n)\|^2 _{L^2(\varOmega)}+\|\vartheta_\infty\|^p_{L^p(\varOmega)}$. 
			On comparing above two results, we get
			\begin{equation*}
				\lim_{n\to\infty}\|\nabla \vartheta (t_n)\|^2 _{L^2(\varOmega)}=\|\vartheta_\infty\|^2 _{L^2(\varOmega)}.
			\end{equation*}					
			Using the above equality together with the weak convergence in
			$H_0^1(\varOmega)$ given by \eqref{weak asym conv}, and invoking the
			Radon--Riesz property \cite[Lemma 27.3]{JCR-20}, we obtain
			$$\|\vartheta (t_n)-\vartheta_\infty\|_{H^1_0(\varOmega)}\to0\ \text{ as }\ n\to\infty,$$ 
			which completes the proof. 
		\end{proof}
		\subsection{Long-time asymptotic behavior}\label{sub-long-time}
	Since every bounded trajectory admits a convergent subsequence as
	$t\to\infty$, the remaining challenge is to determine whether the entire
	trajectory converges, rather than merely a subsequence. In other words,
	we seek conditions under which the solution converges to a single limit
	as $t\to\infty$. In contrast to constrained parabolic equations with
	positive initial data (\cite{AB+ZB+MTM}), a maximum principle is not available in our
	setting. We therefore require a different mechanism to establish the
	convergence of the full trajectory.
		
		For the nonlinear damped wave equation, the damping mechanism induces a monotone decay of the total energy, suggesting that the long-time dynamics should be closely related to the set of equilibria. Consequently, our attention is naturally directed toward the associated stationary problem \eqref{stationary pde}. The stationary solutions correspond to critical points of the energy functional, and in particular, the asymptotic limit is expected to be characterized by a minimizer of the energy constrained to the underlying manifold. 
		
		\subsubsection{Ground state trajectories}
		The notion of ground states is first introduced, followed by a proof of the existence of a ground state solution to problem \eqref{asym1}. This is achieved through the coercivity and weak lower semicontinuity of the energy functional $E$  defined in \eqref{energy}.
		We now establish that any minimizer of the constrained variational problem
		\begin{equation}\label{Ground state cond}
			H^1_0(\varOmega)\times L^2(\varOmega)\ni (\vartheta,\varrho)\to E(\vartheta,\varrho):=	\frac12\|\nabla\vartheta\|_{L^2(\varOmega)}^2
			+
			\frac1p\|\vartheta\|_{L^p(\varOmega)}^p+ \frac12\|\varrho\|^2_{L^2(\varOmega)}\in[0,\infty),
		\end{equation}
		subject to the constraint that the solution belongs to the unit $L^2$-sphere, that is,
		\begin{equation}\label{Energy}
			\inf_{(\vartheta,\varrho)\in  H_0^1(\varOmega)\times L^{2}(\varOmega)}\Bigl\{E(\vartheta,\varrho): \|\vartheta\|_{L^2(\varOmega)}=1\Bigr\},
		\end{equation}
		gives rise to a weak solution of the stationary equation associated with \eqref{asym1}, namely
		\begin{equation}\label{Ground state pde}
			-\A\vartheta-|\vartheta|^{p-2}\vartheta
			+\bigl (\|\nabla\vartheta\|_{L^2(\varOmega)}^2u
			+\|\vartheta\|_{L^p(\varOmega)}^p\bigr )\vartheta=0
			\  \text{ in }\ H^{-1}(\varOmega).
		\end{equation}
		Such a solution will be referred to as a ground state.
		\begin{definition}
			We refer to a solution $\vartheta$ of \eqref{Ground state pde} as a \emph{ground state} whenever $\vartheta$ attains the minimum of the energy functional \eqref{Ground state cond}.
		\end{definition}

			\begin{definition}\label{definition of equlibria}
			An \emph{equilibrium} or a \emph{stationary solution} of \eqref{asym1} is a function $v\in H^1_0(\varOmega),$ satisfying $$\A v +|v|^{p-2}v = \left( \|\nabla v\|^2 _{L^2(\varOmega)}+\|v\|^p_{L^p(\varOmega)} \right)v,$$ where $\|v\|_{L^2(\varOmega)}=1.$ 
		\end{definition}
	The set of all equilibria is denoted $\mathcal{E}.$

		\begin{theorem}[Existence of a ground state]	\label{thm:ground-state}
			Let $\varOmega\subset\mathbb R^d$ be a bounded domain with smooth boundary  and let $p$ satisfy \eqref{p range}. 
			Then the constrained minimization problem
			\[
			m:=
			\inf\left\{
			E(\vartheta,\varrho):
			(\vartheta,\varrho)\in H_0^1(\varOmega)\times L^2(\varOmega),\ 
			\|\vartheta\|_{L^2(\varOmega)}=1
			\right\}
			\]
			admits a minimizer $(U,V)$. Moreover,	$V=0,$ 	and there exists $\lambda\in\mathbb R$ such that
			\begin{align*}
			\A U+|U|^{p-2}U=\lambda U
			\ \text{ in }\ H^{-1}(\varOmega). 
			\end{align*}
			In particular,
		$	\lambda	=	\|\nabla U\|_{L^2(\varOmega)}^2	+	\|U\|_{L^p(\varOmega)}^p,	$	and hence
		\eqref{Ground state pde} is satisfied. 
		\end{theorem}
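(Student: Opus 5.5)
We use the direct method of the calculus of variations, then a Lagrange multiplier argument. First, the $\varrho$-variable decouples: $E(\vartheta,\varrho)\ge E(\vartheta,0)$, with equality iff $\varrho=0$. Hence any minimizer must have $V=0$, and it suffices to minimize
\[
J(\vartheta):=\tfrac12\|\nabla\vartheta\|_{L^2(\varOmega)}^2+\tfrac1p\|\vartheta\|_{L^p(\varOmega)}^p
\]
over $\mathcal S:=H_0^1(\varOmega)\cap\bM$. This set is nonempty (it contains $w_1$), and $J(w_1)<\infty$ because $H_0^1(\varOmega)\hookrightarrow L^p(\varOmega)$ for $p$ as in \eqref{p range} (see \eqref{embedding}). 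Thus $0\le m<\infty$.

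Take a minimizing sequence $\{\vartheta_n\}\subset\mathcal S$. Since $J(\vartheta_n)\ge\frac12\|\vartheta_n\|_{H_0^1(\varOmega)}^2$, the sequence is bounded in $H_0^1(\varOmega)$. By reflexivity we extract $\vartheta_n\overset{w}{\to}U$ in $H_0^1(\varOmega)$. By Rellich--Kondrachov, using \eqref{embedding} and the strict inequality $p<\frac{2(d-1)}{d-2}<\frac{2d}{d-2}$ in \eqref{p range}, we also get $\vartheta_n\to U$ strongly in $L^2(\varOmega)$ and in $L^p(\varOmega)$. Strong $L^2$ convergence gives $\|U\|_{L^2(\varOmega)}=1$, so the constraint passes to the limit; this is the one point where compactness is essential. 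Weak lower semicontinuity of the $H_0^1$-norm, together with convergence of the $L^p$-norm, then yields $J(U)\le\liminf J(\vartheta_n)=m$. Therefore $(U,0)$ is a minimizer.

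For the Euler--Lagrange equation, we compute the Gateaux derivatives of $J$ and of $G(\vartheta):=\|\vartheta\|_{L^2(\varOmega)}^2-1$ on $H_0^1(\varOmega)$:
\[
\langle J'(\vartheta),\psi\rangle=(\nabla\vartheta,\nabla\psi)+(|\vartheta|^{p-2}\vartheta,\psi),\qquad \langle G'(\vartheta),\psi\rangle=2(\vartheta,\psi).
\]
The map $J$ is $C^1$ on $H_0^1(\varOmega)$: the $L^p$ term is $C^1$ because $H_0^1\hookrightarrow L^p$ and $|s|^{p-2}s$ maps $L^p$ continuously into $L^{p'}$. Moreover $G'(U)\ne0$, since $\langle G'(U),U\rangle=2$. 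The Lagrange multiplier rule on the $C^1$ submanifold $\mathcal S$ therefore gives $\mu$ with $J'(U)=\mu G'(U)$. Setting $\lambda=2\mu$, this reads $\A U+|U|^{p-2}U=\lambda U$ in $H^{-1}(\varOmega)$.

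To avoid citing the abstract multiplier theorem, we can argue directly. For $\psi\in H_0^1(\varOmega)$, consider the curve $c(s)=(U+s\psi)/\|U+s\psi\|_{L^2(\varOmega)}$, which lies in $\mathcal S$ for small $|s|$. Differentiating $J(c(s))$ at $s=0$, where the minimum is attained, gives
\[
\langle J'(U),\psi\rangle-(U,\psi)\langle J'(U),U\rangle=0.
\]
This is the equation with $\lambda=\langle J'(U),U\rangle$. Finally, testing the equation with $U$ and using $\|U\|_{L^2(\varOmega)}=1$ gives $\lambda=\|\nabla U\|_{L^2(\varOmega)}^2+\|U\|_{L^p(\varOmega)}^p$, so \eqref{Ground state pde} holds. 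I expect no serious obstacle; the only points needing care are the compact embedding that preserves the constraint and the $C^1$ regularity of the $L^p$ term.
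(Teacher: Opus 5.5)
Your proposal is correct and follows essentially the same route as the paper. Both use the direct method with Rellich--Kondrachov to pass the $L^2$ constraint to the limit, the splitting $E(\vartheta,\varrho)=E(\vartheta,0)+\frac12\|\varrho\|_{L^2(\varOmega)}^2$ to force $V=0$, and a Lagrange multiplier that is identified by testing the equation with $U$. The differences are only cosmetic: you reduce to $J$ before minimizing rather than after, and your curve $c(s)$ gives a self-contained derivation of the multiplier rule, including the $C^1$ regularity of the $L^p$ term that the paper leaves implicit.
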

		
		\begin{proof}
			Let us set
			\[
			\mathcal Q
			:=
			\left\{
			(\vartheta,\varrho)\in H_0^1(\varOmega)\times L^2(\varOmega):
			\|\vartheta\|_{L^2(\varOmega)}=1
			\right\}.
			\]
			Let $(\vartheta_n,\varrho_n)\subset\mathcal M$ be a minimizing sequence, so that
			\[
			E(\vartheta_n,\varrho_n)\to  m.
			\]
			Since $E(\vartheta_n,\varrho_n)$ is bounded and all three terms in the definition
			of $E$ are nonnegative, we obtain
			\begin{align*}
			\|\nabla \vartheta_n\|_{L^2(\varOmega)}\le C,
			\ 
			\|\vartheta_n\|_{L^p(\varOmega)}\le C,
			\ 
			\|\varrho_n\|_{L^2(\varOmega)}\le C,
			\end{align*}
			where $C$ is some positive constant. Hence, an application of the \emph{Banach-Alaoglu theorem}, after passing to a subsequence, yields 
			\begin{align*}
			\vartheta_n\xrightarrow{w} U
			\ \text{ weakly in }\ H_0^1(\varOmega),
		\ \text{ and }\ 
			\varrho_n\xrightarrow{w} V
			\ \text{ weakly in }\ L^2(\varOmega).
			\end{align*}
			Since $\varOmega$ is bounded, the Rellich--Kondrachov theorem yields the
			compact embedding	$
			H_0^1(\varOmega)\hookrightarrow L^2(\varOmega).
		$
			Thus, we  have 
			\[
			\vartheta_n\to  U
			\ \text{ strongly in }\ L^2(\varOmega).
			\]
			Since $\|\vartheta_n\|_{L^2(\varOmega)}=1$, it follows that
		$
			\|U\|_{L^2(\varOmega)}=1,
		$
		so that  $(\vartheta,\varrho)\in\mathcal Q$.
			
			The maps
		$
			\vartheta\mapsto\|\nabla\vartheta\|_{L^2(\varOmega)}^2,
			\ 
			\vartheta\mapsto\|\vartheta\|_{L^p(\varOmega)}^p,
			\ 
			\varrho\mapsto\|\varrho\|_{L^2(\varOmega)}^2
			$
			are weakly lower semicontinuous. Therefore, we have 
			\begin{align*}
				E(U,V)
				&\le
				\liminf_{n\to\infty}E(\vartheta_n,\varrho_n)
				=m.
			\end{align*}
			Since $(U,V)\in \mathcal Q$, the definition of $m$ gives
		$
			m\le E(U,V).
		$
			Hence
		$
			E(U,V)=m,
		$
			so that $(U,V)$ is a minimizer.
			
			We next show that necessarily $V=0$. Since $(U,0)\in\mathcal Q$, by the definition of $E(\cdot,\cdot)$, we infer 
			$$
			E(U,0)
			\le E(U,V).
		$$
			But, we know that 
			\[
			E(U,V)
			=
			E(U,0)+\frac12\|V\|_{L^2(\varOmega)}^2.
			\]
			Since $(U,V)$ is a minimizer, $E(U,V)\leq E(U,0)$ which implies $E(U,V)= E(U,0)$, and therefore
			\[
			\|V\|_{L^2(\varOmega)}=0\Rightarrow  V=0.
			\]

			It remains to derive the Euler--Lagrange equation for $\vartheta$. Let us define
			\[
			G(\vartheta):=\frac12\|\vartheta\|_{L^2(\varOmega)}^2.
			\]
			Then the  constraint is
			$
			G(U)=\frac12.
		$
			Moreover, for all $\phi \in L^2(\varOmega)$, we have 
			\[
			G'(U)[\varphi]
			=
			\int_\varOmega U \phi d x.
			\]
			Since $U\ne0$, we have $G'(U)\ne0$. Hence the Lagrange multiplier
			theorem yields $\lambda\in\mathbb R$ such that
			\[
			E'(U,0)[\varphi,\psi]
			=
			\lambda G'(U)[\varphi]
			\]
			for every
			$
			(\varphi,\psi)\in H_0^1(\varOmega)\times L^2(\varOmega).
		$
			Since
			\[
			E'(U,0)[\varphi,\psi]
			=
			\int_\varOmega\nabla U\cdot\nabla\varphi d x
			+
			\int_\varOmega |U|^{p-2}U \varphi d x,
			\]
			we obtain
			\begin{align}\label{eqn-weak}
			\int_\varOmega\nabla U\cdot\nabla\varphi d x
			+
			\int_\varOmega |U|^{p-2}U \varphi d x
			=
			\lambda\int_\varOmega U \varphi d x
			\end{align}
			for every $\varphi\in H_0^1(\varOmega)$. Hence
			\[
			\A U+|U|^{p-2}U=\lambda U
			\ \text{ in }\ H^{-1}(\varOmega).
			\]
			Finally, taking $\varphi=U$ in \eqref{eqn-weak} gives
			\begin{align*}
			\|\nabla U\|_{L^2(\varOmega)}^2
			+
			\|U\|_{L^p(\varOmega)}^p
			=
			\lambda\|U\|_{L^2(\varOmega)}^2
			=\lambda,
			\end{align*}
			since $U\in\mathcal{M}$. 
			Therefore
		$
			\lambda
			=
			\|\nabla U\|_{L^2(\varOmega)}^2
			+
			\|U\|_{L^p(\varOmega)}^p,
		$
		 and $U$ satisfies \eqref{Ground state pde}. 
		\end{proof}

		Let us set
	$
		w:=|U|.
	$
		Then, by the Sobolev chain rule, we have 
		\begin{align*}
		\|w\|_{L^2(\varOmega)}=\|U\|_{L^2(\varOmega)}=1,
		\ 
		\|w\|_{L^p(\varOmega)}=\|U\|_{L^p(\varOmega)}, \ \text{ and }\  \|\nabla w\|_{L^2(\varOmega)}
		\le
		\|\nabla U\|_{L^2(\varOmega)}.
		\end{align*}
		Consequently,	we get $	E(w,0)\le E(U,0).$
		Since $(U,0)$ is a minimizer of $E$ under the constraint
		$\|U\|_{L^2(\varOmega)}=1$, the reverse inequality follows from minimality:
		$
		E(U,0)\le E(w,0).
	$
		Thus
	$
		E(w,0)=E(u,0),
	$
		and hence $(w,0)$ is also a minimizer. 
		Since
	$
		E(|U|,0)=E(U,0),
	$
		we may assume, without loss of generality, that the first entry of the minimizer of the
		energy functional $E(\cdot,\cdot)$ is nonnegative. By the strong maximum principle,
		it is in fact positive in $\varOmega$. The uniqueness of the positive
		stationary solution is proved in
		\cite[Proposition 5.11]{AB+ZB+MTM}.

		\begin{lemma}\label{isolation of phi}
			Let $(\varphi_1,0)$ be a minimizer of
		\eqref{Energy},
			where $\varphi_1>0$ in $\varOmega$. Then, for any minimizer $(\vartheta,\varrho)$, we have 
			\[
			(\vartheta,\varrho)=(\varphi_1,0)
			\ \text{ or }\ 
			(\vartheta,\varrho)=(-\varphi_1,0).
			\]
		\end{lemma}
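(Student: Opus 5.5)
Let $(\vartheta,\varrho)$ be any minimizer of \eqref{Energy}. By the argument in the proof of Theorem~\ref{thm:ground-state}, necessarily $\varrho=0$, since $E(\vartheta,\varrho)=E(\vartheta,0)+\frac12\|\varrho\|_{L^2(\varOmega)}^2$ and $(\vartheta,0)$ is admissible. So it suffices to show $\vartheta=\pm\varphi_1$.

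\emph{Step 1: $|\vartheta|$ is a minimizer.} Set $w:=|\vartheta|$. As in the discussion preceding the lemma, $\|w\|_{L^2(\varOmega)}=1$, $\|w\|_{L^p(\varOmega)}=\|\vartheta\|_{L^p(\varOmega)}$ and $\|\nabla w\|_{L^2(\varOmega)}=\|\nabla\vartheta\|_{L^2(\varOmega)}$ (the Sobolev chain rule gives $\nabla|\vartheta|=\operatorname{sgn}(\vartheta)\nabla\vartheta$ a.e.). Hence $E(w,0)=E(\vartheta,0)=m$, and by Theorem~\ref{thm:ground-state} $w$ satisfies $\A w+w^{p-1}=\lambda w$ with $\lambda=\|\nabla w\|^2_{L^2(\varOmega)}+\|w\|^p_{L^p(\varOmega)}$.

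\emph{Step 2: $w>0$ and $w=\varphi_1$.} Since $w\ge0$ is a nontrivial weak solution of $-\Delta w+c(x)w=0$ with $c:=w^{p-2}-\lambda$, I would use elliptic regularity (bootstrapping via the subcritical range \eqref{p range}) to get $w\in L^\infty(\varOmega)$, hence $c\in L^\infty(\varOmega)$. Then $-\Delta w+c^+ w=c^-w\ge0$, and the strong maximum principle (Harnack inequality) gives $w>0$ in $\varOmega$. By the uniqueness of positive stationary solutions \cite[Proposition 5.11]{AB+ZB+MTM}, $w=\varphi_1$.

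\emph{Step 3: sign of $\vartheta$.} Now $|\vartheta|=\varphi_1>0$ in $\varOmega$, so $\vartheta$ never vanishes. Since $\vartheta$ solves $\A\vartheta+|\vartheta|^{p-2}\vartheta=\lambda\vartheta$ with bounded right side, $\vartheta\in W^{2,q}_{\rm loc}$ for all $q<\infty$, so $\vartheta$ is continuous in $\varOmega$. Since $\varOmega$ is connected (a domain) and $\vartheta$ does not vanish, $\vartheta$ has constant sign. Alternatively, without continuity: $\vartheta^+$ and $\vartheta^-$ both lie in $H_0^1$, and testing shows each is a nonnegative solution of the same linear equation; by the maximum principle each is either $\equiv0$ or strictly positive, and they cannot both be positive. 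Either way, $\vartheta=\varphi_1$ or $\vartheta=-\varphi_1$.

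The main obstacle is Step 2: justifying the strong maximum principle, which requires boundedness of the potential $w^{p-2}$. This is where the restriction \eqref{p range} enters, through a Moser/Brezis--Kato bootstrap; in $d=1,2$ it is immediate.
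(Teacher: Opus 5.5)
Your proof is correct and follows the same skeleton as the paper's. You show $\varrho=0$ by minimality, note that $w=|\vartheta|$ is again a minimizer and so solves the Euler--Lagrange equation, use a strong maximum principle to get $w>0$, and invoke uniqueness of the positive normalized solution to get $w=\varphi_1$. At the two steps where you argue differently, you are more careful than the paper.

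First, the maximum principle. The paper applies \cite[Theorem 8.19]{DG+NST-01} to $-w$ for the operator $\A-\lambda$, on the grounds that its zeroth-order coefficient $-\lambda$ is nonpositive. In the convention of that theorem (operator $\Delta u+du$, subsolutions $\Delta u+du\ge0$, hypothesis $d\le0$), this corresponds to $d=+\lambda>0$ with $-w$ a supersolution. So the hypotheses are not met, and boundedness of the coefficients is never checked. Your splitting $-\Delta w+c^+w=c^-w\ge0$ together with an $L^\infty$ bound on $w$ is the right justification. Equivalently, write $-\Delta w+w^{p-2}w=\lambda w\ge0$, freezing the nonlinearity as the nonnegative potential $w^{p-2}$.

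Incidentally, that bound needs neither a Brezis--Kato bootstrap nor \eqref{p range}. Since $w\ge0$, it is a weak subsolution of $-\Delta w\le\lambda w$, and local boundedness of subsolutions (Moser iteration) already gives $w\in L^\infty_{\mathrm{loc}}(\varOmega)$. That is all the interior strong maximum principle requires.

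Second, the sign of $\vartheta$. The paper passes directly from $|\vartheta|=\varphi_1$ to $\vartheta=\pm\varphi_1$, which needs an argument excluding a sign change; your Step 3 supplies it. In your alternative version of Step 3, the mechanism is linearity rather than testing. Once $|\vartheta|=\varphi_1$, both $\vartheta$ and $\varphi_1$ solve the linear equation $\A u+\varphi_1^{p-2}u=\lambda u$. The multiplier is the same $\lambda$ for both because $\vartheta$ and $w$ have equal gradient and $L^p$ norms, which your equality $\|\nabla|\vartheta|\|_{L^2(\varOmega)}=\|\nabla\vartheta\|_{L^2(\varOmega)}$ guarantees. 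Hence $\vartheta^{\pm}=\frac12(\varphi_1\pm\vartheta)$ solve it too, and the maximum principle then applies to each.
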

		
		\begin{proof}
			Let $(\vartheta,\varrho)$ be any minimizer. Since $(\vartheta,0)$ is admissible and
			$
			E(\vartheta,\varrho)
			=
			E(\vartheta,0)+\frac12\|\varrho\|_{L^2(\varOmega)}^2,
		$
			minimality implies
		$
			\varrho=0.
		$
			Thus $\vartheta$ is a minimizer of the $\vartheta$-part of the constrained energy.
			Since the function $w=|\vartheta|$ is  a minimizer, $w$ satisfies the
			Euler--Lagrange equation.  Hence $w\geq0$ satisfies
			\[
			\A w+w^{p-1}=\lambda w
			\ \text{ in }\ H^{-1}(\varOmega),
			\]
			where
		$
			\lambda
			=
			\|\nabla w\|_{L^2(\varOmega)}^2
			+
			\|w\|_{L^p(\varOmega)}^p.
		$
			Equivalently,
			\[
			\A w-\lambda w=-w^{p-1}\leq0.
			\]
			Thus $w$ is a nonnegative weak supersolution of
			$
			\A w-\lambda w=0.
		$
			Let
		$
			L\phi:=\A\phi-\lambda\phi.
		$
			Then
			\[
			L(-w)=w^{p-1}\geq0,
			\ 
			-w\leq0.
			\]
			Since the zeroth-order coefficient of $L$ is $-\lambda\leq0$,
			the strong maximum principle for weak solutions (\cite[Theorem 8.19]{DG+NST-01}) applies to $-w$.
			Because $w\not\equiv0$ (indeed $\|w\|_{L^2(\varOmega)}=1$), we obtain
			$
			-w<0\ \text{ in }\ \varOmega,
		$
		so that 
		$
			w>0\ \text{ in }\ \varOmega.
		$
			By the uniqueness of the positive normalized solution,
		$
			w=\varphi_1,
		$
		so that
		$
			\vartheta=\pm\varphi_1.
		$
		\end{proof}

		\begin{remark}\label{rem-isolated}
			From Lemma \ref{isolation of phi}, it follows that the positive ground state solution $\varphi_1$ is isolated. In particular, there exists a neighborhood of $\varphi_1$ in the underlying function space that contains no other solutions.
		\end{remark}

		Motivated by the isolation of the positive ground state solution $\varphi_1$, we next consider solutions corresponding to initial data $(\vartheta_0,\vartheta_1)$ in a neighborhood of $(\varphi_1,0)$. The following Lemma will play a crucial role in proving that such solutions remain in a suitable neighborhood of $\varphi_1,$ for all $t\geq 0$.

		\begin{lemma}\label{lem-energy}
			For a fixed $r_0>0$, there exists $\eta_0>0$ such that, for every
			$\vartheta\in\mathcal M$ and every $\varrho\in L^2(\varOmega)$ satisfying
			\[
			\|\vartheta-\varphi_1\|_{H_0^1(\varOmega)}=r_0,
			\]
			one has
			\begin{align}\label{eqn-upper}
			E(\vartheta,\varrho)\ge E(\varphi_1,0)+\eta_0.
			\end{align}
			Moreover, $r_0$ may be chosen such that
		$
			0<r_0<\frac12\|\varphi_1\|_{H_0^1(\varOmega)},
		$
			and consequently
			$-\varphi_{1}\notin\mathcal{B}_{r_0}(\varphi_{1}), $ where $\mathcal{B}_{r_0}(\varphi_1)
			:=
			\left\{
			\vartheta\in H_0^1(\varOmega):
			\|\vartheta-\varphi_1\|_{H_0^1(\varOmega)}<r_0
			\right\}.$
		\end{lemma}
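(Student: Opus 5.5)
The plan is to argue by contradiction, using compactness of minimizing sequences together with Lemma~\ref{isolation of phi}, which says the only minimizers of \eqref{Energy} are $(\pm\varphi_1,0)$. I take $r_0$ with $0<r_0<\frac12\|\varphi_1\|_{H_0^1(\varOmega)}$ from the outset. In particular $r_0\neq 2\|\varphi_1\|_{H_0^1(\varOmega)}$, which is what the argument needs. Since $\|-\varphi_1-\varphi_1\|_{H_0^1(\varOmega)}=2\|\varphi_1\|_{H_0^1(\varOmega)}>r_0$, this choice also gives $-\varphi_1\notin\mathcal B_{r_0}(\varphi_1)$ immediately.

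\textbf{Step 1: reduce to $\varrho=0$.} Since $E(\vartheta,\varrho)=E(\vartheta,0)+\frac12\|\varrho\|_{L^2(\varOmega)}^2\ge E(\vartheta,0)$, it suffices to show
\[
\inf\bigl\{E(\vartheta,0):\vartheta\in\mathcal M\cap H_0^1(\varOmega),\ \|\vartheta-\varphi_1\|_{H_0^1(\varOmega)}=r_0\bigr\}>E(\varphi_1,0)=m .
\]
The infimum is at least $m$ by the definition of $m$ in Theorem~\ref{thm:ground-state}. Suppose it equals $m$. Then there is a sequence $\vartheta_n\in\mathcal M$ with $\|\vartheta_n-\varphi_1\|_{H_0^1(\varOmega)}=r_0$ and $E(\vartheta_n,0)\to m$.

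\textbf{Step 2: compactness.} The sequence $\{\vartheta_n\}$ is bounded in $H_0^1(\varOmega)$, so after passing to a subsequence $\vartheta_n\rightharpoonup U$ weakly in $H_0^1(\varOmega)$. By the compact embeddings \eqref{embedding}, which hold for $p$ in \eqref{p range}, we get $\vartheta_n\to U$ strongly in $L^2(\varOmega)$ and in $L^p(\varOmega)$. Hence $\|U\|_{L^2(\varOmega)}=1$. Weak lower semicontinuity gives $E(U,0)\le m$, so $(U,0)$ is a minimizer. Lemma~\ref{isolation of phi} then yields $U=\pm\varphi_1$.

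\textbf{Step 3: upgrade to strong convergence.} This is the key step, because the constraint $\|\vartheta_n-\varphi_1\|_{H_0^1(\varOmega)}=r_0$ is not preserved under weak limits. The $L^p$ convergence gives $\frac1p\|\vartheta_n\|_{L^p(\varOmega)}^p\to\frac1p\|U\|_{L^p(\varOmega)}^p$. Combined with $E(\vartheta_n,0)\to m=E(U,0)$, this yields $\|\nabla\vartheta_n\|_{L^2(\varOmega)}\to\|\nabla U\|_{L^2(\varOmega)}$. Weak convergence plus convergence of norms (the Radon--Riesz property, as used in the proof of Theorem~\ref{thm-strong}) then gives $\vartheta_n\to U$ strongly in $H_0^1(\varOmega)$.

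\textbf{Step 4: contradiction.} Passing to the limit in the constraint gives $\|U-\varphi_1\|_{H_0^1(\varOmega)}=r_0$. If $U=\varphi_1$, the left side is $0\neq r_0$. If $U=-\varphi_1$, the left side is $2\|\varphi_1\|_{H_0^1(\varOmega)}\neq r_0$. Either way we contradict the constraint, so the infimum exceeds $m$ by some $\eta_0>0$, which is \eqref{eqn-upper}.
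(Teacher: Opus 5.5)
Your proposal is correct and follows essentially the same route as the paper: argue by contradiction, extract a weakly convergent subsequence whose limit is a constrained minimizer and hence $\pm\varphi_1$ by Lemma~\ref{isolation of phi}, then use convergence of the energies and the Radon--Riesz property to get strong $H_0^1(\varOmega)$ convergence, which contradicts $\|\vartheta_n-\varphi_1\|_{H_0^1(\varOmega)}=r_0$. The differences are cosmetic. You drop $\varrho$ at the outset, and you rule out $-\varphi_1$ only after obtaining strong convergence, whereas the paper does so beforehand, implicitly via weak lower semicontinuity of the norm. Your remark that the argument only needs $r_0\neq 2\|\varphi_1\|_{H_0^1(\varOmega)}$ is accurate.
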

		
		\begin{proof}
			Let us choose and fix
			$
			0<r_0<\frac12\|\varphi_1\|_{H_0^1(\varOmega)}.
		$
			Suppose, by contradiction, that no $\eta_0>0$ with the stated
			property exists. Then, for every $n\in\mathbb N$, there exist
			$\vartheta_n\in\mathcal M$ and $\varrho_n\in L^2(\varOmega)$ such that
			\begin{align*}
			\|\vartheta_n-\varphi_1\|_{H_0^1(\varOmega)}=r_0\ \text{ and }\  	E(\vartheta_n,\varrho_n)
			<
			E(\varphi_1,0)+\frac1n.
			\end{align*}
			Since
			\[
			E(\vartheta_n,\varrho_n)
			=
			E(\vartheta_n,0)+\frac12\|\varrho_n\|_{L^2(\varOmega)}^2
			\ge E(\vartheta_n,0),
			\]
			and $\varphi_1$ minimizes $E(\,\cdot\,,0)$ in $$\mathcal{R}:=\left\{\vartheta\in H_0^1(\varOmega):\|\vartheta\|_{L^2(\varOmega)}=1\right\},$$ we have
			\[
			E(\varphi_1,0)
			\le E(\vartheta_n,0)
			\le E(\vartheta_n,\varrho_n)
			< E(\varphi_1,0)+\frac1n.
			\]
			Therefore, we get 
			\[
			E(\vartheta_n,0)\to  E(\varphi_1,0)\ \text{ and }\ 
			\|\varrho_n\|_{L^2(\varOmega)}\to 0.
			\]
			Since $\{\vartheta_n\}$ is bounded in $H_0^1(\Omega)$, there exists $\vartheta\in H_0^1(\Omega)$ such that, up to a subsequence,
			
			\[
			\vartheta_n\xrightarrow{w} \vartheta
			\ \text{ in }\ H_0^1(\varOmega).
			\]
			Since $\varOmega$ is bounded,
			\[
			\vartheta_n\to \vartheta
			\ \text{ strongly in }\ L^2(\varOmega).
			\]
			Thus $\vartheta\in\mathcal R$, by weak lower semicontinuity, we have 
			\[
			E(\vartheta,0)
			\le \liminf_{n\to\infty}E(\vartheta_n,0)
			=E(\varphi_1,0).
			\]
			The minimality of $\varphi_1$ therefore yields
		$
			E(\vartheta,0)=E(\varphi_1,0).
			$
			By the uniqueness of the minimizer up to sign,
		$
			\vartheta=\pm\varphi_1
		$ (Lemma \ref{isolation of phi}). 
			The choice of $r_0$ excludes $\vartheta=-\varphi_1$, since
			\[
			\|-\varphi_1-\varphi_1\|_{H_0^1(\varOmega)}
			=
			2\|\varphi_1\|_{H_0^1(\varOmega)}
			>r_0,
			\]
		so that 
		$
			\vartheta=\varphi_1.
		$
				Moreover, the convergence of the energies together with weak
			convergence implies, by the Radon--Riesz property,
			\[
			\vartheta_n\to\varphi_1
			\ \text{ strongly in }\ H_0^1(\varOmega),
			\]
			which contradicts
		$
			\|\vartheta_n-\varphi_1\|_{H_0^1(\varOmega)}=r_0
		$
			for every $n$.	Therefore such an $\eta_0>0$ satisfying \eqref{eqn-upper} exists.
		\end{proof}

		\begin{lemma}[Solution trapped near $\varphi_1$]\label{trapped}
		Let $r>0$ be such that
		$
		r<\frac12\|\varphi_1\|_{H_0^1(\varOmega)}.
	$
		Suppose that the strong solution $\vartheta$ of \eqref{eq:mains}--\eqref{MID}, with
		initial data
	$	(\vartheta_0,\vartheta_1)\in D(\A)\times H_0^1(\varOmega)
		\subset H_0^1(\varOmega)\times L^2(\varOmega),	$
		satisfies
		\[
		\|\vartheta_0-\varphi_1\|_{H_0^1(\varOmega)}<r.
		\]
		If $\vartheta_0$ and $\vartheta_1$ are chosen in such a way that 
		\begin{align}\label{eqn-energy-d}
		E(\vartheta(0),\vartheta_t(0))
		=
		\frac12\|\vartheta_1\|_{L^2(\varOmega)}^2
		+\frac12\|\nabla \vartheta_0\|_{L^2(\varOmega)}^2
		+\frac1p\|\vartheta_0\|_{L^p(\varOmega)}^p
		<
		E(\varphi_1,0)+\eta,
		\end{align}
	for some $\eta>0$,	then
		\[
		\vartheta (t)\in\mathcal B_r(\varphi_1)
		\ \text{ in }\ H_0^1(\varOmega),
		\ \text{ for all }\ t\ge0.
		\]
		In particular,
		\[
		\|\vartheta (t)-\varphi_1\|_{H_0^1(\varOmega)}<r
		\ \text{ for all }\ t\ge0.
		\]
		\end{lemma}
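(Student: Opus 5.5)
The plan is a standard energy-barrier (trapping) argument. It combines the monotonicity of the energy from \eqref{asym4} with the barrier estimate of Lemma~\ref{lem-energy}, applied with $r_0=r$. The implicit quantifier in the statement has to be fixed first: $\eta$ is taken to be the constant $\eta_0=\eta_0(r)$ supplied by Lemma~\ref{lem-energy} for the radius $r_0=r$, which is admissible since $r<\frac12\|\varphi_1\|_{H_0^1(\varOmega)}$. For arbitrary $\eta>0$ the claim is false, because large energy allows the solution to leave the ball. So the hypothesis \eqref{eqn-energy-d} is read with $\eta\le\eta_0$.

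\textbf{Step 1 (continuity).} By Theorem~\ref{wdss}, $\vartheta\in C([0,T];H_0^1(\varOmega)\cap\mathcal M)$ for every $T$. Hence the map $t\mapsto \|\vartheta(t)-\varphi_1\|_{H_0^1(\varOmega)}$ is continuous on $[0,\infty)$ and its value at $t=0$ is $<r$.

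\textbf{Step 2 (energy bound).} From \eqref{asym4}, or \eqref{asym5} (valid for $\gamma\ge0$), the energy is nonincreasing. Therefore, for all $t\ge0$,
\[
E(\vartheta(t),\vartheta_t(t))\le E(\vartheta_0,\vartheta_1)<E(\varphi_1,0)+\eta\le E(\varphi_1,0)+\eta_0 .
\]

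\textbf{Step 3 (contradiction via the first exit time).} Suppose the conclusion fails, and let $t^*:=\inf\{t\ge0:\|\vartheta(t)-\varphi_1\|_{H_0^1(\varOmega)}\ge r\}$. By Step 1, $t^*>0$ and $\|\vartheta(t^*)-\varphi_1\|_{H_0^1(\varOmega)}=r$, by the intermediate value theorem. Since $\vartheta(t^*)\in\mathcal M$ and $\vartheta_t(t^*)\in L^2(\varOmega)$, Lemma~\ref{lem-energy} gives $E(\vartheta(t^*),\vartheta_t(t^*))\ge E(\varphi_1,0)+\eta_0$. This contradicts Step 2. Hence $\vartheta(t)\in\mathcal B_r(\varphi_1)$ for all $t\ge0$.

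\textbf{Main obstacle.} The only delicate points are the continuity used in Step 1 and the legitimacy of the energy identity for strong solutions. The continuity is provided by the $C([0,T];H_0^1)$ regularity in Theorem~\ref{wdss}. The energy identity holds via the Lions--Magenes lemma together with $(\vartheta,\vartheta_t)=0$, exactly as in the proof of Lemma~\ref{sub seq conv soln}. Beyond these, the only subtle point is the choice $\eta\le\eta_0(r)$ noted above.
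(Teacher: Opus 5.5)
Your proposal is correct and follows essentially the same energy-barrier argument as the paper. The paper fixes $0<\eta<\widetilde\eta(r)$ from Lemma~\ref{lem-energy} with $r_0=r$, uses monotonicity of the energy, and uses continuity in $H_0^1(\varOmega)$ to find a first time $t^*$ with $\|\vartheta(t^*)-\varphi_1\|_{H_0^1(\varOmega)}=r$, which contradicts the barrier. The differences are cosmetic: you apply the barrier directly with $\varrho=\vartheta_t(t^*)$ rather than with $\varrho=0$ followed by $E(\vartheta,\vartheta_t)\ge E(\vartheta,0)$, and you state the restriction $\eta\le\eta_0(r)$ up front, which the paper imposes only inside its proof.
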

		\begin{proof}
			Fix $r$ as above. By Lemma \ref{lem-energy}, there exists
			$\widetilde{\eta}=\widetilde{\eta}(r)>0$ such that
			$
			\|\widetilde{w}-\varphi_1\|_{H_0^1(\varOmega)}=r
		$
			implies
			\begin{align}\label{eqn-lower-1}
			E(\widetilde{w},v)\ge E(\varphi_1,0)+\widetilde\eta,
			\end{align}
			for every $\widetilde{w}\in\mathcal{R}$ and every $v\in L^2(\varOmega)$.
			Let us choose 
			\[
			0<\eta<\widetilde\eta.
			\]
			Since the energy is non-increasing, by assumption \eqref{eqn-energy-d}, we have
			\begin{align}\label{eqn-lower-2}
				E(\vartheta (t),\vartheta_t(t))\le E(\vartheta(0),\vartheta_t(0))<E(\varphi_1,0)+\eta,\ \text{ for all }\ t\ge0.
				\end{align}
			Suppose, by contradiction, that there exists \(t_0>0\), such that
			$$\|\vartheta(t_0)-\varphi_1\|_{H_0^1(\varOmega)}\ge r.$$
			Since \(\vartheta\in C([0,\infty);H_0^1(\varOmega))\) and $\|\vartheta_0-\varphi_1\|_{H_0^1(\varOmega)}<r,$
			there exists a first time \(t^*\in(0,t_0]\), such that $$\|\vartheta(t^*)-\varphi_1\|_{H_0^1(\varOmega)}=r.$$
			Applying  \eqref{eqn-lower-1} for  \(w=\vartheta(t^*)\) and $\varrho=0$ with $r_0=r$, we obtain   $$E(\vartheta(t^*),0)\ge E(\varphi_1,0)+\widetilde{\eta}>E(\varphi_1,0)+\eta.$$
			Since,
			$E(\vartheta(t^*),\vartheta_t(t^*))=\frac12\|\vartheta_t(t^*)\|_{L^2(\varOmega)}^2+ E(\vartheta(t^*),0),$
			it follows that $$E(\vartheta(t^*),\vartheta_t(t^*))\ge E(\vartheta(t^*),0)\ge E(\varphi_1,0)+\tilde{\eta}>E(\varphi_1,0)+\eta.$$
			This contradicts \eqref{eqn-lower-2}. 
			Therefore such a time \(t_0\) cannot exist and 
			$\vartheta (t)\in \mathcal{B}_r(\varphi_1),\ \text{ for all }\  t\ge0,$ which completes the proof. 	%In particular, $\vartheta (t)\in \mathcal{B}_r(\varphi_1), \text{ as } t\to\infty.$
		\end{proof}

		\subsubsection{Proof of Theorem \ref{GB}}
	We now turn to the main theorem of this section (Theorem \ref{GB}), which establishes the
	convergence of $\vartheta (t)$ as $t\to\infty$.
		
		\begin{proof}[Proof of Theorem \ref{GB}]
			We divide the proof into th following steps: 
			\vskip 0.1cm
			\noindent 
			\textbf{Step 1:} \emph{Characterisation of the $\omega$-limit set.}
			Let us define 
			\begin{equation*}
				\omega(\vartheta_0,\vartheta_1):=\left\{
				(\vartheta,\varrho)\in \mathcal{X}:
				\text{there exists } t_n\to\infty
			 	\text{ such that }
				\bigl(\vartheta (t_n),\vartheta_t(t_n)\bigr)
				\to (\vartheta,\varrho)
				\text{ in } \mathcal{X}
				\right\},
			\end{equation*}
			where $\mathcal{X} :=H_0^1(\varOmega)\times L^2(\varOmega)$. Let $(\widetilde{u},\widetilde{v})\in \omega(\vartheta_0,\vartheta_1)$. By the definition of the $\omega$-limit set, there exists a sequence
			$t_n\to\infty$ such that
			\begin{equation}\label{w limit 1}
				(\vartheta (t_n),\vartheta_t(t_n))
				\to
				(\widetilde{u},\widetilde{v})
			\ 	\text{ in }\ 
				H_0^1(\varOmega)\times L^2(\varOmega).
			\end{equation}
				Since \eqref{eqn-conv} holds, we infer that $\widetilde{v}=0$.
			Hence, every element of the $\omega$-limit set is of the form $(\widetilde{u},0)$.
		
			Let $\varphi\in H_0^1(\varOmega)$ be arbitrary. Integrating the weak
			formulation of \eqref{asym1} over the interval $(t_n,t_n+1)$ and using Lions--Magenes lemma, we obtain
			\begin{equation}\label{asym conv tn and tn+1}
				\begin{aligned}
					&\bigl( \vartheta_t(t_n+1)-\vartheta_t(t_n),\varphi\bigr)
					+\g\int_{t_n}^{t_n+1}
					\bigl( \vartheta_t(t),\varphi\bigr)  dt
					+\int_{t_n}^{t_n+1}
					\bigl(\nabla \vartheta (t),\nabla\varphi\bigr)  dt\\
					&\qquad+\int_{t_n}^{t_n+1}
					\bigl( |\vartheta (t)|^{p-2}\vartheta (t),\varphi\bigr)  dt
					=
					\int_{t_n}^{t_n+1}
					\lambda(t)\bigl( \vartheta (t),\varphi\bigr)  dt,
				\end{aligned}
			\end{equation}
			where, $\lambda(t)=\|\nabla \vartheta (t)\|_{L^2(\varOmega)}^2-\|\vartheta_t(t)\|_{L^2(\varOmega)}^2+\|\vartheta (t)\|_{L^p(\varOmega)}^p.$
			By Theorem \ref{apct}, the semiflow $\vartheta (t):=(\vartheta (t),\vartheta_t(t)),t\geq 0$ is asymptotically precompact. Combining this with arguments similar to Lemma  \ref{lem:uniform_shift} (see \eqref{uk limit asym H^1_0}) and Proposition \ref{convergence of ut to 0}, we can extract a sequence $t_n\to\infty$ such that the corresponding translated trajectories
			\begin{equation}\label{eqn-conv-5}
				(\vartheta(t_n+s),\vartheta_t(t_n+s)) \to (\widehat{u},0)\ \text{ in }\  H_0^1(\varOmega)\times L^2(\varOmega) \ \text{ for }\ s\in[0,1],
			\end{equation}
	for some $(\widehat{u},0)\in \omega(\vartheta_0,\vartheta_1)$. We now pass to the limit in each term of \eqref{asym conv tn and tn+1}. By the help of Lemma \ref{convergence of ut to 0}, we obtain
				\begin{equation*}
					|\big(\vartheta_t(t_n+1)-\vartheta_t(t_n),\varphi\big)|\le (\norm{\vartheta_t(t_n+1)} _{L^2(\varOmega)}+\norm{\vartheta_t(t_n)} _{L^2(\varOmega)})\norm{\phi} _{L^2(\varOmega)}\to 0\  \text{ as }\ n\to\infty.
				\end{equation*}	
				  Using the relation \eqref{velocity bound} and H\"older's inequality, we have
				\begin{equation*}
					\left|\int_{t_n}^{t_n+1}\big( \vartheta_t(t),\varphi \big)dt\right|\le\|\varphi\|_{L^2(\varOmega)}
					\left(\int_{t_n}^{t_n+1}\|\vartheta_t(t)\|_{L^2(\varOmega)}^2  dt \right)^{1/2}\to 0\ \text{ as }\ n\to\infty.
				\end{equation*}
				Using \eqref{eqn-conv-5} together with the Lebesgue Dominated Convergence Theorem, we conclude that
				\begin{equation*}
				 \int_{t_n}^{t_n+1}\big(\nabla \vartheta (t),\nabla\varphi\big)dt =
				 \int_0^1\big(\nabla \vartheta(t_n+s),\nabla\varphi\big)ds \to\big(\nabla \widehat{u},\nabla\varphi \big)
					\ \text{ as }\ n\to\infty.
				\end{equation*}
		For the values of $p$ given in  \eqref{p range}, the embedding \eqref{embedding} and the convergence \eqref{eqn-conv-5} imply 
				\begin{align*}
					\vartheta(t_n+s)&\to \widehat{u}\ \text{ in }\ L^p(\varOmega), \ \text{ for }\ s\in[0,1],\\
						\int_{t_n}^{t_n+1}\big(|\vartheta (t)|^{p-2}\vartheta (t),\varphi \big)dt&\to\big(|\widehat{u}|^{p-2}\widehat{u},\varphi \big) \ \text{ as }\ n\to\infty.
				\end{align*}
			Furthermore, by the help of \eqref{embedding}, \eqref{eqn-conv-5} and  Lemma \ref{convergence of ut to 0}, there exist $\mu_\infty\in\R$ such that 
				\begin{equation*}
					\lambda(t_n+s)
					\to
					\mu_\infty
				\end{equation*}
				for any $s\in[0,1]$. Therefore,\vspace{-0.2cm}
				\begin{equation*}
					\begin{aligned}
					& \left|	\int_{t_n}^{t_n+1}
					\lambda(t)\bigl( \vartheta (t),\varphi\bigr)  dt- \mu_\infty
					\bigl(\widehat{u},\varphi\bigr) \right|	\\&=
						\left|
						\int_0^1
						\lambda(t_n+s)\,
						\bigl(\vartheta(t_n+s),\varphi\bigr)ds
						-
						\mu_\infty
						\bigl(\widehat{u},\varphi\bigr)
						\right|
						\\
						&=
						\left|
						\int_0^1
						\bigl(\lambda(t_n+s)-\mu_\infty\bigr)
						\bigl(\vartheta(t_n+s),\varphi\bigr)ds
						+
						\mu_\infty
						\int_0^1
						\bigl(\vartheta(t_n+s)-\widehat{u},\varphi\bigr)ds
						\right|
						\\
						&\le
						\sup_{s\in[0,1]}
						\bigl|\lambda(t_n+s)-\mu_\infty\bigr|
						\int_0^1
						\bigl|\bigl(\vartheta(t_n+s),\varphi\bigr)\bigr|  ds
						+
						|\mu_\infty|
						\int_0^1
						\bigl|
						\bigl(\vartheta(t_n+s)-\widehat{u},\varphi\bigr )
						\bigr|  ds\\
						&\le	\sup_{s\in[0,1]}
						\bigl|\lambda(t_n+s)-\mu_\infty\bigr|\sup_{s\in[0,1]}\|\vartheta(t_n+s)\|_{L^2(\varOmega)}\|\phi\|_{L^2(\varOmega)}+|\mu_\infty| 	\sup_{s\in[0,1]}
						\bigl\|\vartheta(t_n+s)-\widehat{u}\bigr\|_{L^2(\varOmega)}\|\phi\|_{L^2(\varOmega)},\\
						&\to0\ \text{ as }\ n\to\infty.
					\end{aligned}
				\end{equation*}
			Passing to the limit in \eqref{asym conv tn and tn+1}, we obtain
			\begin{equation}\label{eqn-weak-form}
				\bigl(\nabla \widehat{u},\nabla\varphi\bigr)+\bigl(|\widehat{u}|^{p-2}\widehat{u},\varphi\bigr)=\mu_\infty
				\bigl(\widehat{u},\varphi\bigr), \ \text{ 	for all }\ \varphi\in H_0^1(\varOmega). 
			\end{equation}
		Therefore, we infer 
			\begin{equation*}
				\mathcal{A}\widehat{u}+|\widehat{u}|^{p-2}\widehat{u}=\mu_\infty \widehat{u},\ \text{ in }\ H^{-1}(\varOmega).
			\end{equation*}
			Taking $\varphi=\widehat{u}$ in  \eqref{eqn-weak-form} yields
			$
			\mu_\infty
			=
			\|\nabla \widehat{u}\|_{L^2(\varOmega)}^2
			+
			\|\widehat{u}\|_{L^p(\varOmega)}^p.
			$
			Therefore, by Definition \ref{definition of equlibria}, we get $\widehat{u}\in\mathcal{E}.$
			Here we see the key role played by Lemmas \ref{isolation of phi}
			and \ref{trapped}, which together guarantee that
		$	\vartheta (t)\in\mathcal{B}_r(\varphi_1),\  \text{for all }\ t\ge0.$
			%which is closed in
			%$H_0^1(\varOmega)$,
			This  implies 
		$$
				\widehat{u}\in\overline{\mathcal{B}_r(\varphi_1)}\  
	\text{	so that }\ 
			\widehat{u}\in\overline{\mathcal{B}_r(\varphi_1)}\cap\mathcal{E}.
		$$
			By Remark \ref{rem-isolated}, we infer 	$\overline{\mathcal{B}_r(\varphi_1)}\cap\mathcal{E}=\{\varphi_1\}.$
			Therefore, we have $$\widehat{u}=\varphi_1.$$
		Since $r<\frac{1}{2}\|\varphi_{1}\|_{H_0^1(\varOmega)},$  the $\omega$-limit set is singleton and 
			$$	\omega(\vartheta_0,\vartheta_1)=\{(\varphi_1,0)\}.$$
			
			\vskip 0.1cm
			\noindent 
			\textbf{Step 2:} \emph{Convergence of the full trajectory in
				$H_0^1(\varOmega)\times L^2(\varOmega)$.}
			Let us now show that the entire trajectory converges strongly to
			$(\varphi_1,0)$.
			Assume by contradiction that
			\begin{equation}
				(\vartheta (t),\vartheta_t(t))\not\to(\varphi_1,0)\ \text{ in }\ H_0^1(\varOmega)\times L^2(\varOmega).
			\end{equation}
			Then there exist $\varepsilon>0$ and a sequence $t_n\to\infty$, such that
			\begin{equation}\label{contradict asym}
				\|(\vartheta (t_n),\vartheta_t(t_n))-(\varphi_1,0)\|_{H_0^1(\varOmega)\times L^2(\varOmega)}\ge \varepsilon\
				\text{ as }\  n\to\infty.
			\end{equation}
			Since the orbit is asymptotically precompact in
			$H_0^1(\varOmega)\times L^2(\varOmega)$, the sequence $(\vartheta (t_n),\vartheta_t(t_n))$
			admits a subsequence, still denoted by $(\vartheta (t_n),\vartheta_t(t_n))$, and an element
			$(\widetilde{\vartheta},\widetilde{\varrho})\in H_0^1(\varOmega)\times L^2(\varOmega)$ such that
			\begin{equation*}
				(\vartheta (t_n),\vartheta_t(t_n))
				\to(\widetilde{\vartheta},\widetilde{\varrho})\ \text{  in}\ 	H_0^1(\varOmega)\times L^2(\varOmega).
			\end{equation*}	
		Since  $t_n\to\infty$, the limit point $(\widetilde{\vartheta},\widetilde{\varrho})$ belongs to the
			$\omega$-limit set $\omega(\vartheta_0,\vartheta_1)$, that is, $(\widetilde{\vartheta},\widetilde{\varrho})\in\omega(\vartheta_0,\vartheta_1).$
			Since $	\omega(\vartheta_0,\vartheta_1)=\{(\varphi_1,0)\},$
			it follows that$(\widetilde{\vartheta},\widetilde{\varrho})=	(\varphi_1,0).$	
			Therefore
			\begin{equation*}
				(\vartheta (t_n),\vartheta_t(t_n))\to(\varphi_1,0),\ \text{ strongly in }\	H_0^1(\varOmega)\times L^2(\varOmega),
			\end{equation*}
			which contradicts \eqref{contradict asym}.
			Hence, it is immediate that 
			\begin{equation}
				(\vartheta (t),\vartheta_t(t))\to(\varphi_1,0)\ \text{ in }\ H_0^1(\varOmega)\times L^2(\varOmega)\ \text{ as }\ t\to\infty.
			\end{equation}
			In particular,
			\begin{equation*}
				\vartheta (t)\to\varphi_1,
				\ \text{ strongly in }\ H_0^1(\varOmega),
			\end{equation*}
			which  completes the proof.
		\end{proof}
		
				%	\section{Appendix}
			\appendix
			\section{Number of solutions of the stationary problem}\label{appendix}
			\label{strongly damped wave}

				We briefly explain why the uniqueness of the positive normalized
			stationary solution does not imply uniqueness of the stationary
			problem. Recall that
			\[
			\mathcal M
			:=
			\left\{
			\vartheta\in L^2(\varOmega):
			\|\vartheta\|_{L^2(\varOmega)}=1
			\right\},
			\]
			and set
			\[
			M:=\mathcal M\cap H_0^1(\varOmega)
			=
			\left\{
			\vartheta\in H_0^1(\varOmega):
			\|\vartheta\|_{L^2(\varOmega)}=1
			\right\}.
			\]
			The constrained energy functional is
			\[
			J(\vartheta):=E(\vartheta,0)
			=
			\frac12\|\nabla\vartheta\|_{L^2(\varOmega)}^2
			+\frac1p\|\vartheta\|_{L^p(\varOmega)}^p,
			\  \vartheta\in M.
			\]
			Notice that $J$ is even,
			$
			J(-\vartheta)=J(\vartheta),
			$
			and $M$ is an infinite-dimensional symmetric constraint manifold.  Let us define
			\[
			G:H_0^1(\varOmega)\to\mathbb R,
			\ 
			G(\vartheta):=\frac12\left(\|\vartheta\|_{L^2(\varOmega)}^2-1\right).
			\]
			Then, it is immediate that 
			\[
			M=G^{-1}(\{0\})
			=
			\left\{
			\vartheta\in H_0^1(\varOmega):
			\|\vartheta\|_{L^2(\varOmega)}=1
			\right\}.
			\]
			For $\vartheta,\phi\in H_0^1(\varOmega)$, we find 
			\[
			G'(\vartheta)[\phi]
			=
			\int_\varOmega \vartheta\phi d x,
			\] 
	where $G'(\vartheta)$ denotes the Fr\'echet derivative of $G$ at $\vartheta$, which
	belongs to $(H_0^1(\varOmega))^{\prime}=H^{-1}(\varOmega)$. 	Moreover,
	$
		G'(\vartheta)[\vartheta]
		=
		\|\vartheta\|_{L^2(\varOmega)}^2
		=
		1,
	$
		so $G'(\vartheta)\neq0$ for all $\vartheta\in M$. Thus condition $(\textbf{M})$ in \cite[Page 104]{AA+AM-07} is satisfied.	Note that 
			\[
			\mathcal{T}_\vartheta M
			=
			\ker G'(\vartheta)
			=
			\left\{
			\phi\in H_0^1(\varOmega):
			\int_\varOmega \vartheta\phi d x=0
			\right\}.
			\]
			Identifying $G'(\vartheta)$ with $\vartheta$ through the natural embedding
			$L^2(\varOmega)\hookrightarrow H^{-1}(\varOmega)$, the normal space to $M$
			at $\vartheta$ is
			$
			\mathcal{N}_\vartheta M=\operatorname{span}\{G'(\vartheta)\}=\operatorname{span}\{\vartheta\}.
			$

			\begin{definition}[{\cite[Definition 7.7]{AA+AM-07}}]\label{lem-PS}
			A sequence $\vartheta_n\in M$ is called a \emph{Palais–Smale} (PS) sequence on $M$  if $J|_M$  is bounded and $\nabla_MJ(\vartheta_n)\to 0$.  We say that $J $ satisfies the (PS) condition on $M$, if every $PS$-sequence has a converging subsequence. 
			\end{definition}

			\begin{definition}
				A function $\vartheta\in M$ is called a \emph{critical point of $J$ on $M$}
				(or a \emph{constrained stationary point}) if
				\[
				J'(\vartheta)[\phi]=0
				\ 
				\text{ for every }\ \phi\in T_\vartheta M. 
				\]
				\end{definition}
			
			\begin{lemma}\label{lem-ps}
				$J|_M$ satisfies the  (PS) condition. 
			\end{lemma}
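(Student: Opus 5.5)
Let $\{\vartheta_n\}\subset M$ be a (PS) sequence: $J(\vartheta_n)\le C$ and $\nabla_M J(\vartheta_n)\to0$ in $H^{-1}(\varOmega)$. We will show that it has a subsequence converging strongly in $H_0^1(\varOmega)$. We rely on the structure of the constrained gradient and on the compactness of the lower-order terms, in the spirit of Proposition \ref{compact B}.

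\textbf{Step 1 (boundedness and weak limit).} Both terms of $J$ are nonnegative, so $\|\nabla\vartheta_n\|_{L^2(\varOmega)}^2\le 2C$ and $\{\vartheta_n\}$ is bounded in $H_0^1(\varOmega)$. Passing to a subsequence, $\vartheta_n\xrightarrow{w}\vartheta$ in $H_0^1(\varOmega)$. By \eqref{embedding} and \eqref{p range}, $2(p-1)<\frac{2d}{d-2}$ when $d\ge3$, so $\vartheta_n\to\vartheta$ strongly in $L^2(\varOmega)$, $L^p(\varOmega)$ and $L^{2(p-1)}(\varOmega)$. In particular $\|\vartheta\|_{L^2(\varOmega)}=1$ and $\vartheta\in M$.

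\textbf{Step 2 (the Lagrange multipliers).} The normal space is $\operatorname{span}\{\vartheta_n\}$, so the constrained gradient has the form
\[
\nabla_M J(\vartheta_n)=A\vartheta_n+|\vartheta_n|^{p-2}\vartheta_n-\lambda_n\vartheta_n .
\]
Testing with $\vartheta_n$, which is admissible because the $H^{-1}$-remainder annihilates the normal direction up to $o(1)$, gives
\[
\lambda_n=\|\nabla\vartheta_n\|_{L^2(\varOmega)}^2+\|\vartheta_n\|_{L^p(\varOmega)}^p+o(1).
\]
Hence $\{\lambda_n\}$ is bounded, and after a further subsequence $\lambda_n\to\lambda_*$. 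Which identification of the gradient (in $H^{-1}$ versus a Riesz representative) is used depends on the definition in \cite{AA+AM-07}. Either way, we get $A\vartheta_n+|\vartheta_n|^{p-2}\vartheta_n-\lambda_n\vartheta_n=:f_n\to0$ in $H^{-1}(\varOmega)$.

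\textbf{Step 3 (strong convergence).} We invert $A:H_0^1(\varOmega)\to H^{-1}(\varOmega)$, which is an isomorphism by Lax--Milgram:
\[
\vartheta_n=A^{-1}\bigl(f_n-|\vartheta_n|^{p-2}\vartheta_n+\lambda_n\vartheta_n\bigr).
\]
By inequality \eqref{identity 1} and H\"older, as in Proposition \ref{compact B}, $|\vartheta_n|^{p-2}\vartheta_n\to|\vartheta|^{p-2}\vartheta$ in $L^2(\varOmega)$. Also $\lambda_n\vartheta_n\to\lambda_*\vartheta$ in $L^2(\varOmega)$, and $f_n\to0$ in $H^{-1}(\varOmega)$. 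The whole argument therefore converges in $H^{-1}(\varOmega)$, so $\vartheta_n\to A^{-1}\bigl(-|\vartheta|^{p-2}\vartheta+\lambda_*\vartheta\bigr)$ strongly in $H_0^1(\varOmega)$. This limit must be $\vartheta$, which gives the (PS) condition. As an alternative, one can test $f_n$ with $\vartheta_n-\vartheta$ and obtain $\|\nabla(\vartheta_n-\vartheta)\|_{L^2(\varOmega)}\to0$ directly.

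\textbf{Main obstacle.} The delicate point is Step 2: making precise the relation between $\nabla_M J(\vartheta_n)\to0$ and the full-space identity with a multiplier, and showing that $\lambda_n$ stays bounded. We handle it by projecting $J'(\vartheta_n)$ onto $\mathcal T_{\vartheta_n}M$ along $\vartheta_n$, using $G'(\vartheta_n)[\vartheta_n]=1$. This makes the normal component exactly $\lambda_n\vartheta_n$, and $\lambda_n=J'(\vartheta_n)[\vartheta_n]$ is bounded by Step 1. The remaining steps are compactness statements that are already available in the paper for the range \eqref{p range}.
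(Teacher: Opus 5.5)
Your proof is correct. Steps 1 and 2 coincide with the paper: the sequence is bounded in $H_0^1(\varOmega)$, the weak limit lies in $M$ by compactness, and testing with $\vartheta_n$ identifies $\lambda_n$ and shows it is bounded. Your main Step 3, however, is a genuinely different route. The alternative you mention at the end, testing with $\vartheta_n-\vartheta$, is exactly the paper's argument. There the lower-order terms are shown to vanish, which gives $\|\nabla\vartheta_n\|_{L^2(\varOmega)}\to\|\nabla\vartheta\|_{L^2(\varOmega)}$, and strong convergence then follows from the Radon--Riesz property.

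You instead write $\vartheta_n=A^{-1}\bigl(f_n-|\vartheta_n|^{p-2}\vartheta_n+\lambda_n\vartheta_n\bigr)$ and use that the argument converges in $H^{-1}(\varOmega)$. This exploits the structure ``isomorphism plus compact lower-order part'' and gives strong convergence and the limit equation in one stroke, with no norm-convergence step.

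The trade-off is in the range of $p$. As written, your route needs $|\vartheta_n|^{p-2}\vartheta_n\to|\vartheta|^{p-2}\vartheta$ in $L^2(\varOmega)$, hence $2(p-1)<\frac{2d}{d-2}$, which is exactly \eqref{p range}. The paper's testing argument only needs $\vartheta_n\to\vartheta$ in $L^p(\varOmega)$ with $|\vartheta_n|^{p-2}\vartheta_n$ bounded in $L^{p'}(\varOmega)$. It therefore works on the full subcritical range $p<\frac{2d}{d-2}$. You could recover that range by using $L^{p'}(\varOmega)\hookrightarrow H^{-1}(\varOmega)$ instead of $L^2(\varOmega)$.

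In Step 2, your explicit choice $\lambda_n=J'(\vartheta_n)[\vartheta_n]$ is more careful than the paper, which only invokes an ``approximate Lagrange multiplier characterization''. To close it, say explicitly why $f_n\to0$ in $H^{-1}(\varOmega)$:
\begin{itemize}
\item Set $P_n\phi:=\phi-(\vartheta_n,\phi)\vartheta_n$. Then $P_n\phi\in\mathcal{T}_{\vartheta_n}M$ and $(J'(\vartheta_n)-\lambda_n\vartheta_n)[\phi]=J'(\vartheta_n)[P_n\phi]$.
\item By Poincar\'e, $\|P_n\phi\|_{H_0^1(\varOmega)}\le\bigl(1+\sigma_1^{-1/2}\|\vartheta_n\|_{H_0^1(\varOmega)}\bigr)\|\phi\|_{H_0^1(\varOmega)}$, which is uniformly bounded by Step 1.
\item Hence $\nabla_MJ(\vartheta_n)\to0$ implies $f_n\to0$ in $H^{-1}(\varOmega)$.
\end{itemize}
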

			\begin{proof}
		 Let $\{\vartheta_n\}_{n=1}^{\infty}\subset M$ be a PS sequence. Therefore, we have 
			\begin{align}
				\{J(\vartheta_n)\}_{n=1}^{\infty}\ \text{ is bounded and }
				\ 
				\nabla_MJ(\vartheta_n)\to 0.
			\end{align}
			Since
			$
			\frac12\|\nabla \vartheta_n\|_{L^2(\varOmega)}^2
			\leq J(\vartheta_n),
			$
			the sequence $\{\vartheta_n\}$ is bounded in $H_0^1(\varOmega)$. Hence, after
			passing to a subsequence, there exists $\vartheta\in H_0^1(\varOmega)$ such that
			\[
			\vartheta_n\xrightarrow{w} \vartheta
			\ \text{ weakly in }\ H_0^1(\varOmega).
			\]
			Moreover,   by the compact Sobolev embeddings
			$
			H_0^1(\varOmega)\hookrightarrow L^p(\varOmega),
			$
			valid for the range of $p$ specified in \eqref{p range}, we have
			\[
			\vartheta_n\to  \vartheta
			\ \text{ strongly in }
			L^p(\varOmega). 
			\]
			Since $\vartheta_n\in M$, we have
			$
			\|\vartheta_n\|_{L^2(\varOmega)}=1,
			$
			so that 
			\[
			\|\vartheta\|_{L^2(\varOmega)}
			=
			\lim_{n\to\infty}\|\vartheta_n\|_{L^2(\varOmega)}
			=1,
			\]
			and hence $\vartheta\in M$.

			Since $\nabla_MJ(\vartheta_n)\to0$, the derivative $J'(\vartheta_n)$ tends to zero
			when restricted to the tangent space $\mathcal{T}_{\vartheta_n}M$. Since 	$\mathcal{N}_\vartheta M=\operatorname{span}\{\vartheta\}$, the approximate Lagrange
			multiplier characterization yields $\lambda_n\in\mathbb R$ such that
				\begin{equation}\label{eqn-conv-10}
			J'(\vartheta_n)-\lambda_n\vartheta_n\to0
			\ \text{ in }\ H^{-1}(\varOmega).
			\end{equation}
			Here $\vartheta_n$ is identified with an element of $H^{-1}(\varOmega)$ through
			the duality pairing
			$
			\langle \vartheta_n,\phi\rangle
			=
			\int_\varOmega \vartheta_n\phi d x.
			$
			Since $\{\vartheta_n\}$ is bounded in $H_0^1(\varOmega)$, we may test \eqref{eqn-conv-10}
			with $\vartheta_n$ to obtain
			\begin{equation*}
			\left\langle J'(\vartheta_n),\vartheta_n\right\rangle
			-
			\lambda_n\|\vartheta_n\|_{L^2(\varOmega)}^2
			=o(1).
			\end{equation*}
			Using $\|\vartheta_n\|_{L^2(\varOmega)}=1$ and
			$
			\left\langle J'(\vartheta_n),\vartheta_n\right\rangle
			=
			\|\nabla \vartheta_n\|_{L^2(\varOmega)}^2
			+
			\|\vartheta_n\|_{L^p(\varOmega)}^p,
			$
			we obtain
			\begin{align}\label{eqn-conv-11}
				\lambda_n
				=
				\|\nabla \vartheta_n\|_{L^2(\varOmega)}^2
				+
				\|\vartheta_n\|_{L^p(\varOmega)}^p
				+o(1).
			\end{align}
			Since $\{J(\vartheta_n)\}$ is bounded, the two terms on the right-hand side
			of \eqref{eqn-conv-11} are bounded. Hence $\{\lambda_n\}$ is bounded. Passing to
			a further subsequence, we may assume that
			\[
			\lambda_n\to \lambda
			\ \text{ for some }\ \lambda\in\mathbb R.
			\]
			We now test \eqref{eqn-conv-10} with $\vartheta_n-\vartheta$. Since
			$\vartheta_n-\vartheta$ is bounded in $H_0^1(\varOmega)$, we obtain
			\begin{align}\label{eqn-conv-12}
				0
				=&
				\lim_{n\to\infty}
				\left\langle
				J'(\vartheta_n)-\lambda_n\vartheta_n,\vartheta_n-\vartheta
				\right\rangle
				\nonumber	\\
				=&
				\lim_{n\to\infty}
				\Bigg[
				\int_\varOmega
				\nabla \vartheta_n\cdot\nabla(\vartheta_n-\vartheta) d x
				+
				\int_\varOmega
				|\vartheta_n|^{p-2}\vartheta_n(\vartheta_n-\vartheta) d x
				-
				\lambda_n\int_\varOmega
				\vartheta_n(\vartheta_n-\vartheta) d x
				\Bigg].
			\end{align}
			We claim that the last two terms in \eqref{eqn-conv-12} converge to zero. Indeed,
			since
			$
			\vartheta_n\to  \vartheta
			\ \text{ strongly in }\ L^2(\varOmega)
			$
			and $\{\lambda_n\}$ is bounded,
			\begin{align}\label{eqn-conv-13}
				\left|
				\lambda_n\int_\varOmega \vartheta_n(\vartheta_n-\vartheta) d x
				\right|
				\leq
				|\lambda_n|
				\|\vartheta_n\|_{L^2(\varOmega)}
				\|\vartheta_n-\vartheta\|_{L^2(\varOmega)}
				\to 0.
			\end{align}
			Furthermore, since
			$
			\vartheta_n\to  \vartheta
			\ \text{ strongly in }\ L^p(\varOmega),
			$
			the continuity of the map
			$
			z\mapsto |z|^{p-2}z
			$
			from $L^p(\varOmega)$ into $L^{p'}(\varOmega)$, where
			$p'=p/(p-1)$, gives
			\[
			|\vartheta_n|^{p-2}\vartheta_n
			\to 
			|\vartheta|^{p-2}\vartheta
			\ \text{ strongly in }\ L^{p'}(\varOmega).
			\]
			Consequently,
			\begin{align}\label{eqn-conv-14}
				\left|
				\int_\varOmega |\vartheta_n|^{p-2}\vartheta_n(\vartheta_n-\vartheta) d x
				\right|
				\leq
				\left\|
				|\vartheta_n|^{p-2}\vartheta_n
				\right\|_{L^{p'}(\varOmega)}
				\|\vartheta_n-\vartheta\|_{L^p(\varOmega)}
				\to 0.
			\end{align}
			It follows from \eqref{eqn-conv-13}--\eqref{eqn-conv-14} that
			\begin{align}\label{eqn-conv-15}
				\int_\varOmega
				\nabla \vartheta_n\cdot\nabla(\vartheta_n-\vartheta) d x
				\to 0.
			\end{align}
			Therefore,
			$
			\|\nabla \vartheta_n\|_{L^2(\varOmega)}^2
			-
			\int_\varOmega\nabla \vartheta_n\cdot\nabla \vartheta d x
			\to 0.
			$
			Since
			$
			\vartheta_n\xrightarrow{w} \vartheta
			\ \text{ weakly in }\ H_0^1(\varOmega),
			$
			we have
			$
			\int_\varOmega\nabla \vartheta_n\cdot\nabla \vartheta d x
			\to 
			\|\nabla\vartheta\|_{L^2(\varOmega)}^2.
			$
			Therefore, we have 
			$
			\|\nabla \vartheta_n\|_{L^2(\varOmega)}^2
			\to 
			\|\nabla\vartheta\|_{L^2(\varOmega)}^2.
			$
			Together with
			$
			\vartheta_n\xrightarrow{w} \vartheta
			\ \text{ weakly in }\ H_0^1(\varOmega),
			$
			the Radon--Riesz property of the Hilbert space $H_0^1(\varOmega)$ yields
			\[
			\vartheta_n\to  \vartheta
			\ \text{ strongly in }\ H_0^1(\varOmega).
			\]
			Thus every PS sequence for $J|_M$ admits a strongly convergent
			subsequence in $H_0^1(\varOmega).$ Hence $J|_M$ satisfies the
			(PS) condition.
			\end{proof}

			\begin{proposition}\label{PS}
				The stationary problem \eqref{Ground state pde} admits a countable
				family of distinct normalized solutions
				$\{\vartheta_k\}_{k\in\mathbb N}\subset H_0^1(\varOmega)$.
			\end{proposition}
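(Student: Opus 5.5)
We will apply the Lusternik--Schnirelmann theory for even functionals on symmetric $C^1$ Hilbert manifolds \cite{AA+AM-07} to $J|_M$. First we check the structural hypotheses. $M=G^{-1}(0)$ is a closed, symmetric ($M=-M$) $C^{1}$ submanifold of $H_0^1(\varOmega)$, since $G'(\vartheta)\neq0$ on $M$ (condition $(\textbf{M})$ above). For $p$ as in \eqref{p range}, $J\in C^1(H_0^1(\varOmega))$ is even and bounded below on $M$ by $0$. By Lemma \ref{lem-ps}, $J|_M$ satisfies (PS). For $k\in\mathbb N$ we set
\[
\Sigma_k:=\{A\subset M:\ A \text{ compact},\ A=-A,\ \gamma(A)\ge k\},\qquad c_k:=\inf_{A\in\Sigma_k}\sup_{\vartheta\in A}J(\vartheta),
\]
where $\gamma$ is the Krasnoselskii genus.

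Next we show $\Sigma_k\neq\emptyset$ and $c_k<\infty$ for every $k$. We take the unit sphere $S_k:=\{\sum_{i=1}^k a_iw_i:\sum a_i^2=1\}$ of $V_k=\mathrm{span}\{w_1,\dots,w_k\}$, where the $w_i$ are the Dirichlet eigenfunctions. It lies in $M$, is compact and symmetric, and has $\gamma(S_k)=k$ because it is odd-homeomorphic to $S^{k-1}$ (Borsuk--Ulam). Since $S_k$ is compact in $H_0^1$ and $J$ is continuous, $c_k\le\max_{S_k}J<\infty$. Clearly $c_1\le c_2\le\cdots$. The standard deformation lemma on $M$ is available thanks to (PS) and the existence of a pseudo-gradient vector field, which can be chosen odd because $J$ is even. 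With it, the LS theorem gives that each $c_k$ is a critical value of $J|_M$. Moreover, if $c_k=c_{k+j}=c$ for some $j\ge1$, then $\gamma(K_c)\ge j+1\ge2$, where $K_c$ is the critical set at level $c$. A set of genus at least $2$ is infinite, so infinitely many critical points arise in that case. In either case $J|_M$ has infinitely many pairs $\pm\vartheta_k$ of distinct critical points. Choosing one from each pair, or countably many from an infinite $K_c$, gives a countable family.

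Finally, we identify the critical points with solutions. If $\vartheta\in M$ is critical, then $J'(\vartheta)$ vanishes on $\mathcal T_\vartheta M=\ker G'(\vartheta)$. Hence $J'(\vartheta)=\lambda G'(\vartheta)$ in $H^{-1}(\varOmega)$, that is, $\A\vartheta+|\vartheta|^{p-2}\vartheta=\lambda\vartheta$. Testing this with $\vartheta$ and using $\|\vartheta\|_{L^2(\varOmega)}=1$ gives $\lambda=\|\nabla\vartheta\|_{L^2(\varOmega)}^2+\|\vartheta\|_{L^p(\varOmega)}^p$. So $\vartheta$ solves \eqref{Ground state pde}.

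We do not expect any step to be a genuine obstacle, because (PS) is already established. The most delicate point is the deformation lemma on the manifold $M$, which is only $C^{1}$, with an equivariant flow. We plan to invoke it directly from \cite{AA+AM-07,PD+JM-07} instead of reproving it, after checking that $M$ is a complete $C^1$ Finsler/Hilbert manifold; completeness holds because $M$ is closed in $H_0^1(\varOmega)$. One could also add that $c_k\to\infty$, by intersecting with $V_{k-1}^\perp$ and using $J\ge\frac12\sigma_k$ there. This is not needed for the countability claim, but it shows the levels are unbounded.
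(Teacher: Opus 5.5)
Your proposal is correct and follows essentially the same route as the paper: the (PS) condition from Lemma \ref{lem-ps}, compact symmetric sets of genus $k$ obtained by intersecting $k$-dimensional subspaces with $M$, the Lusternik--Schnirelmann minimax levels $c_k$, and identification of the Lagrange multiplier by testing with $\vartheta_k$. Your explicit dichotomy (either infinitely many distinct values $c_k$, or some $c$ with $\gamma(K_c)\ge 2$, which forces $K_c$ to be infinite) spells out the multiplicity step that the paper compresses into the remark that $M$ has infinite genus.
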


			\begin{proof}
			Under the (PS) condition for $J|_M$ (Lemma \ref{lem-PS}), the
			Lusternik--Schnirelmann theory (\cite[Theorems 9.10 and 10.9]{AA+AM-07}) based on the Krasnoselski genus (\cite[Definition 10.1]{AA+AM-07}) can be
			applied. For a compact symmetric set $A\subset M$, let $\gamma(A)$
			denote its Krasnoselski genus, and define
			\[
			\Sigma_k
			:=
			\left\{
			A\subset M:
			A\ \text{is compact and symmetric, and }
			\gamma(A)\ge k
			\right\}.
			\]
			For every $k\in\mathbb N$, the class $\Sigma_k$ is nonempty. Indeed,
			if $X_k\subset H_0^1(\varOmega)$ is a $k$-dimensional linear subspace, then
			\[
			A_k:=X_k\cap M
			=
			\left\{
			\vartheta\in X_k:\|\vartheta\|_{L^2(\varOmega)}=1
			\right\}
			\]
			is compact and symmetric and is homeomorphic to the sphere
			$\mathbb{S}^{k-1}$. Hence $\gamma(A_k)=k$ (\cite[Corollary 10.6]{AA+AM-07}).
			The corresponding Lusternik--Schnirelmann minimax levels are (\cite[Page 149]{AA+AM-07})
			\[
			c_k
			:=
			\inf_{A\in\Sigma_k}\sup_{\vartheta\in A}J(\vartheta),
			\  k\in\mathbb N.
			\]
			They are finite, since
			\[
			c_k
			=
			\inf_{A\in\Sigma_k}\sup_{\vartheta\in A}J(\vartheta)
			\leq
			\sup_{\vartheta\in A_k}J(\vartheta).
			\]
			By the Lusternik--Schnirelmann critical point theorem
			\cite[Theorems 9.10 and 10.9]{AA+AM-07}, if $J|_M$
			satisfies the (PS) condition at every finite level, then each
			$c_k$ is a critical value of $J|_M$. Moreover, since $M$ has infinite
			Krasnoselski genus (\cite[Corollary 10.6]{AA+AM-07}), the theorem yields infinitely many critical
			points of $J|_M$.
			
			Thus, for every $k\in\mathbb N$, there exists
			$\vartheta_k\in M$ and $\lambda_k\in\mathbb R$ such that
			\[
			J'(\vartheta_k)=\lambda_k \vartheta_k
			\ \text{ in }\ H^{-1}(\varOmega).
			\]
			Equivalently, the following system is satisfied:
			\begin{equation}
				\left\{
				\begin{aligned}
					\mathcal{A} \vartheta_k+|\vartheta_k|^{p-2}\vartheta_k
					&=
					\lambda_k \vartheta_k
					\ \text{ in }\ H^{-1}(\varOmega),\\
					\|\vartheta_k\|_{L^2(\varOmega)}&=1.
				\end{aligned}
				\right.
			\end{equation}
			Testing the equation with $u_k$ gives
			\[
			\lambda_k
			=
			\|\nabla \vartheta_k\|_{L^2(\varOmega)}^2
			+
			\|\vartheta_k\|_{L^p(\varOmega)}^p.
			\]
			Consequently, each critical point satisfies the normalized stationary
			problem
			\begin{align*}
			\mathcal{A} \vartheta_k+|\vartheta_k|^{p-2}\vartheta_k
			=
			\left(
			\|\nabla \vartheta_k\|_{L^2(\varOmega)}^2
			+
			\|\vartheta_k\|_{L^p(\varOmega)}^p
			\right)\vartheta_k \ \text{ in }\ H^{-1}(\varOmega). 
			\end{align*}
			which completes the proof. 
			\end{proof}
			
			\begin{remark}
				1. The above minimax construction yields a countable family of distinct
				normalized stationary solutions
			$
				\{\vartheta_k\}_{k\in\mathbb N}\subset H_0^1(\varOmega),
			$
				and hence the stationary problem \eqref{Ground state pde} possesses at
				least countably infinitely many solutions. We emphasize that this
				statement does not assert that the set of all stationary solutions is
				countable; there may exist additional solutions beyond the family
				obtained through the Lusternik--Schnirelmann construction.
				
		2. 	The first minimax level is the ground-state level,
			$c_1=J(\varphi_1),$
			and the corresponding critical points are $\pm\varphi_1$. Since the
			uniqueness of the positive normalized stationary solution $\varphi_1$
			has already been established, the existence of further stationary
			solutions is not excluded. In particular, any normalized stationary
			solution different from $\pm\varphi_1$ cannot be nonnegative. Indeed,
			if $\vartheta$ is a nonnegative normalized stationary solution, then, by the
			strong maximum principle, $\vartheta>0$ in $\varOmega$. The established uniqueness
			of the positive normalized stationary solution then implies
			$\vartheta=\varphi_1.$
			Consequently, every normalized stationary solution other than
			$\pm\varphi_1$ must change sign.
		\end{remark}

		\medskip\noindent
		\textbf{Acknowledgments:} 
		The authors gratefully acknowledge Mr. Ashish Bawalia for his valuable
		discussions and insightful suggestions.		H. Tiwari wants to thank University Grants Commission (UGC),	Govt. of India for financial assistance.  Support for M. T. Mohan's research received from the National Board of Higher Mathematics (NBHM), Department of Atomic Energy, Government of India (Project No. 02011/13/2025/NBHM(R.P)/R\&D II/1137).

				\medskip\noindent	\textbf{Declarations:} 
				
				\noindent 	\textbf{Ethical Approval:}   Not applicable 
				
				\noindent  \textbf{Competing interests: } The authors declare no competing interests. 
				
				\noindent  \textbf{Conflict of interest: }On behalf of all authors, the corresponding author states that there is no conflict of interest.
				
				\noindent 	\textbf{Authors' contributions:} All authors have contributed equally. 
				
				\noindent 	\textbf{Availability of data and materials:} Not applicable. 
				
				\bibliographystyle{plain}
				\bibliography{Harsh}
				
				\end{document}